\newtheorem{theorem}{Theorem}[section]
\newtheorem{lemma}[theorem]{Lemma}
\newtheorem{proposition}[theorem]{Proposition}
\newtheorem{corollary}[theorem]{Corollary}
\theoremstyle{definition}
\newtheorem{definition}[theorem]{Definition}
\newtheorem{example}[theorem]{Example}
\theoremstyle{remark}
\newtheorem{remark}[theorem]{Remark}
\numberwithin{equation}{section}
\newcommand{\C}{\mathbb{C}}
\newcommand{\Sp}{\mathbb{S}}
\newcommand{\Q}{\mathbb{Q}}
\newcommand{\R}{\mathbb{R}}
\newcommand{\T}{\mathbb{T}}
\newcommand{\Z}{\mathbb{Z}}
\newcommand{\N}{\mathbb{N}}
\newcommand{\OO}{\operatorname{O}}
\newcommand{\SO}{\operatorname{SO}}
\newcommand{\GL}{\operatorname{GL}}
\newcommand{\id}{\operatorname{id}}
\newcommand{\G}{\mathbf{G}}
\newcommand{\HH}{\mathbf{H}}
\newcommand{\K}{\mathbf{K}}
\newcommand{\ftimes}[2]{{\lrsubscripts{\times}{#1}{#2}}}
\newcommand{\ttimes}{{\lrsubscripts{\times}{\text{triv}}{}}}
\newcommand{\Ad}{\mathrm{Ad}}
\newcommand{\Sat}{\operatorname{Sat}}
\newcommand{\pr}{\operatorname{pr}}
\newcommand{\triv}{\text{triv}}
\newcommand{\un}{\operatorname{un}}
\newcommand{\eq}{\operatorname{eq}}
\newcommand{\HOM}{\operatorname{Hom}}
\begin{document}

\title{The universal Euler characteristic and Burnside group for definable groupoids}

\author{Carla Farsi}
\address{Department of Mathematics, University of Colorado at Boulder,
    UCB 395, Boulder, CO 80309-0395}
\email{farsi@euclid.colorado.edu}

\author{Emily Proctor}
\address{Department of Mathematics, Middlebury College, Middlebury, VT 05753}
\email{eproctor@middlebury.edu}

\author{Christopher Seaton}
\address{Mathematics and Statistics Department,
    Skidmore College, 815 North Broadway,
    Saratoga Springs, NY 12866}
\email{cseaton@skidmore.edu}

\keywords{topological groupoid, definable groupoid, universal Euler characteristic, Burnside group}
\subjclass[2020]{Primary 22A22; Secondary 14P10, 19A22, 58H05}

\begin{abstract}
We introduce the universal Euler characteristic of orbit space definable groupoids, a class of groupoids containing  cocompact proper Lie groupoids as well as translation groupoids associated to  proper  definable  group actions.  We show that every additive and multiplicative invariant of orbit space definable groupoids with an additional local triviality hypothesis arises as a ring homomorphism applied to the universal Euler characteristic. This in particular includes the $\Gamma$-orbifold Euler characteristic introduced by the first and third authors when $\Gamma$ is a finitely presented group.

For definable groupoids, where the object and arrow spaces as well as the structure maps are definable, we also introduce a Burnside group (which admits a partial multiplication), which generalizes the classical Burnside ring associated to compact Lie groups.
\end{abstract}

\maketitle

\tableofcontents

%------------------------------------------------------------------------------
%------------------------------------------------------------------------------
%------------------------------------------------------------------------------

\section{Introduction}
\label{sec:Intro}

Definable spaces   are a powerful generalization of
semialgebraic spaces.
Semialgebraic spaces are defined as solutions sets of a finite collection  of polynomial inequalities, and thus generalize solutions sets of polynomial equations.
Definable spaces   \cite{vandenDriesBook} play an  important role
in our investigation since,  by using their cell decompositions,  one can define associated definable Euler characteristics, which are not homotopy invariant  in the non-compact case.
Since Euler characteristics are   finite  additive invariants, they can also be reinterpreted   as measures, and so integration  with respect to them becomes possible, see \cite{ViroIntegralEulerChar}.

Classical concepts like groups, continuous group actions,
quotient spaces associated to group actions, and so on, can be developed in definable categories, and one finds
many contributions to this field in the literature, see  \cite{vandenDriesBook} and references therein.
\emph{Groupoids}, which are powerful generalizations of topological groups,   have been studied extensively in the last decades.
Perhaps the most well-known example of a groupoid  is the \emph{translation groupoid} associated to a continuous group action on a space.
Many properties, originally defined for groups, can be  extended to groupoids. Moreover,  groupoids can be given definable structures.
Here we will focus on Euler characteristics of groupoids such as in  Definitions \eqref{def:OrbitSpaceDefinable} and  \eqref{def:DefinGrpoid},
as well as on their Burnside group structures.

In  the paper  \cite{FarSeaJLMS}, the first and third authors
introduced the $\Gamma$-orbifold Euler characteristic (where
 $\Gamma$ is a finitely presented group) for  \emph{orbit space definable groupoids}. These groupoids are topological groupoids that are endowed with a definable structure on their  quotient space.
Examples of  orbit space definable  groupoids include cocompact proper Lie groupoids as well as many examples of topological
groupoids with no smooth structure.  The $\Gamma$-orbifold Euler characteristic of an orbit space definable groupoid is defined
using integration with respect to the Euler characteristic following \cite{GZEMH-HigherOrbEulerCompactGroup} and shown to be equal to the usual Euler
characteristic of the \emph{$\Gamma$-inertia space} \cite{FarsiPflaumSeaton2}. In the case of a translation groupoid its associated inertia groupoid is
the groupoid of the group elements that fix some point in the space, labeled by the points they fix.

Here we continue the  investigation started in  \cite{FarSeaJLMS}
by introducing  the \emph{universal Euler characteristic} of an orbit space definable groupoid, see Definition \eqref{def:UnivEC}, and detailing some of its properties.
In particular we  show that every additive and multiplicative invariant of orbit space definable groupoids with an additional local triviality hypothesis arises as a ring homomorphism applied to the universal Euler characteristic, see Theorem~\ref{thrm:ECAddUnique} and Corollary~\ref{cor:ECAddUniqueIntegral}. These  multiplicative invariants in particular include the $\Gamma$-orbifold Euler characteristic introduced in \cite{FarSeaJLMS} when $\Gamma$ is a finitely presented group.

Afterwards, inspired by the definition of the Burnside ring for actions of compact Lie groups and its relation to Euler characteristics, see \cite{tomDieckBurnsideI} \cite[Ch.~5]{tomDieckTGRepThry},  as well as the case of finite groupoids, see \cite{KaoSpinCatfied,KaoSpinBurnsideTheory}, we introduce the \emph{Burnside group} $B(\G)$ of a definable groupoid $\G$, see Section~\ref{sec:Burnside}.
In the classical setting of a compact Lie group $G$, the Burnside ring $B(G)$ of $G$ is, roughly speaking, the  collection of all sufficiently nice $G$-spaces,
and is generated by the transitive $G$-actions. This rough idea generalizes to the groupoid case, though with significant technical differences.
In the groupoid setting, transitive actions are not as readily defined, see e.g. \cite{FarsiScullWattsPAMS},
and care is required in the appropriate notion of orbit type and associated concepts, see
Definitions~\ref{def:OrbitSpaceDefinable} and \ref{def:OrbitTypeGrpoid}. The definition of $B(\G)$ yields a definition of an \emph{equivariant Euler
characteristic} for definable $\G$-spaces that is given in Section~\ref{subsec:BurnsideGroup} and generalizes that of a $G$-space given in
\cite[Def.~2]{GZ-EquivariantAnalogues} when $G$ is a finite group; see Definition~\ref{def:EqEC}.

In order to define the Burnside group of a definable groupoid $\G$, we restrict to $\G$-spaces that have finitely many orbit types, in the appropriate sense.
Though the orbit types of $\G$ are significantly constrained, see Proposition~\ref{prop:FiniteHomeoTypes},
the question of whether a definable groupoid can have infinitely many orbit types remains open. Hence, in order to define the multiplicative structure
on  the Burnside group $B(\G)$  associated to a definable groupoid $\G$, we need to restrict to $\G$-spaces whose products have finitely many orbit types;
see Section~\ref{subsec:BurnsideRing}.
Our main results of Section~\ref{sec:Burnside} is Theorem~\ref{thrm:BurnsideGenTransitive} which describes the generators of $B(\G)$.
Moreover Proposition~\ref{prop:BSProductWellDefTransitive} and Theorem~\ref{thrm:Product} detail the multiplication structure, in the cases in which it can be defined.

The organization of this paper is as follows. In Section~\ref{sec:Background}, we review the relevant definitions and background
information. Section~\ref{sec:Examples} details examples of definable and orbit space definable groupoids, illustrating key concepts such as the various
notions of orbit type. Section~\ref{sec:UniversalEC} develops the universal Euler characteristic and demonstrates its universality for
inertia locally trivial orbit space definable groupoids.
Section~\ref{sec:Burnside} defines the Burnside group
of a definable groupoid and corresponding equivariant Euler characteristic.

%------------------------------------------------------------------------------
%------------------------------------------------------------------------------
%------------------------------------------------------------------------------

\section*{Acknowledgements}

We would like to thank Jason DeVito, Karl H. Hofmann, and Henrik Winther for helpful answers to questions.
C.S. was supported by an AMS-Simons Research Enhancement Grant for PUI Faculty and a Rhodes College Faculty Development Grant.
This work was also partially supported by C. F.'s Simons Foundation Collaboration Grants for Mathematicians \#523991 and \#MPS-TSM-00007731.

%------------------------------------------------------------------------------
%------------------------------------------------------------------------------
%------------------------------------------------------------------------------

\section{Background and Definitions}
\label{sec:Background}
In this section, we recall background and fix notation. We present relevant definitions and results concerning topological groupoids in
Section~\ref{subsec:BackTopGpoids} and affine definable
spaces in Section~\ref{subsec:BackDefinSpaces}. In Section~\ref{subsec:BackOrbSpaceDefinGpoid}, we review the definition and
basic properties of orbit space definable groupoids and consider definable groupoids and corresponding $\G$-spaces in
Sections~\ref{subsec:BackDefinGpoids} and \ref{subsec:BackDefinGSpaces}

\subsection{Topological Groupoids}
\label{subsec:BackTopGpoids}

We begin by recalling the definition of (proper) topological groupoids.
	
\begin{definition}[Topological groupoid and properties, orbit space, weak orbit type]
\label{def:topological groupoid}
A \emph{topological groupoid} consists of topological spaces $\G_0$ of objects and $\G_1$ of arrows, which in this paper are always assumed to be Hausdorff,
endowed with the following continuous structure maps:
\begin{itemize}
	\item Source: $s\colon\G_1\to\G_0$;
	\item Target: $t\colon\G_1\to\G_0$;
	\item Unit: $u\colon\G_0\to\G_1$, with $u(x)$ often written as $1_x$ for $x\in\G_0$;
	\item Inverse: $i\colon\G_1\to\G_1$, with $i(g)$ often written as $g^{-1}$ for $g\in\G_1$; and
	\item Multiplication: $m\colon\G_1\ftimes{s}{t} \G_1 = \{ (g,h)\in \G_1\times\G_1 : s(g) = t(h)\} \to\G_1$, with $m(g,h)$ often written as $gh$.
\end{itemize}
We assume that all topological groupoids are \emph{source-open}, meaning the source map $s$ is an open map. When the groupoid needs to be specified,
we will denote the structure maps $s_\G$, $t_\G$, etc.

For $g,h\in\G_1$ and $x\in\G_0$, $s(gh) = s(h)$, $t(gh) = t(g)$, multiplication is associative when it is defined,
$u(x)$ is a two-sided identity, i.e., $g 1_{s(g)} = 1_{t(g)} g = g$, $gg^{-1} = 1_{t(g)}$, and $g^{-1}g = 1_{s(g)}$.

For $x,y\in\G_0$, we denote $\G_x = s^{-1}(x)$, $\G^y = t^{-1}(y)$, and let $\G_x^y$ denote the set
\[
    \G_x^y = s^{-1}(x)\cap t^{-1}(y).
\]
When $x=y,$ $\G_x^x$ is called the \emph{isotropy group} of $x$.
If $S\subseteq\G_0$, we let $\G_{|S}$ denote the \emph{restriction of $\G$ to $S$}, the groupoid with object space $S$, arrow space $s^{-1}(S)\cap t^{-1}(S)$,
and structure maps given by the restrictions of those of $\G$.
The \emph{saturation} $\Sat S$ of $S$ is the set $t\circ s^{-1}(S)\subseteq\G_0$.

If $x\in\G_0$, the \emph{orbit} $\G x$ of $x$ is the set $\{ t(g) : g\in\G_x\}$.
The \emph{orbit space} $\vert\G\vert$ is the set of orbits of points in $\G_0$ with the quotient topology
with respect to the \emph{orbit map} $\pi\colon\G_0\to\vert\G\vert$ which sends $x$ to its orbit $\G x$.
The assumption that $\G$ is source-open implies that the orbit map is as well open; this assumption is necessary for $\pi$ to be a quotient map
\cite[Prop.~2.11]{TuNonHausdorff}.

Following \cite[Def.~5.6]{PPTOrbitSpace}, we say that the \emph{weak orbit type} of $x\in\G_0$ is the isomorphism class
$[\G_x^x]$ of the isotropy group $\G_x^x$ up to isomorphism of topological groups. We say that $x,y\in\G_0$ have the
\emph{same weak orbit type} if $\G_x^x$ and $\G_y^y$ are isomorphic as topological groups.
We let $\G_{[\G_x^x]}$ denote the set of points in $\G_0$ with weak orbit type $[\G_x^x]$ and let
$\vert\G\vert_{[\G_x^x]} = \pi(\G_{[\G_x^x]})$.

A topological groupoid is \emph{proper} if the map
\[
    (s,t): \G_1 \to \G_0 \times \G_0
\]
is proper, i.e., the preimage of compact sets is compact. We say $\G$ is \emph{source-proper} if the source map is proper.

A topological groupoid $\G$ is \emph{transitive} if $\G_0$ consists of a single orbit, i.e., for each $x,y\in\G_0$, there is a $g\in\G_0$
such that $s(g) = x$ and $t(g) = y$. Equivalently, $\lvert\G\rvert$ consists of a single point.
\end{definition}

\begin{definition}[Groupoid homomorphism]
\label{def:Groupoid-Hom}
If $\G$ and $\HH$ are topological groupoids, a \emph{homomorphism} $\Phi\colon\G\to\HH$ consists of two
continuous maps $\Phi_0\colon\G_0\to\HH_0$ and $\Phi_1\colon\G_1\to\HH_1$ that preserve each of the structure maps.
\end{definition}

Note that groupoids are generalizations of groups; a topological group $G$ is a topological groupoid with a single object, and a group homomorphism
$\Phi = \Phi_1$
is a groupoid homomorphism with constant $\Phi_0$.

We now recall the definition of groupoid actions and the associated construction of translation groupoids. Both of these concepts are defined similarly to the case of a group action. We focus on left actions and note that right actions can be defined with the obvious modifications, mutatis mutandis.

\begin{definition}[Groupoid action, translation groupoid, equivariant map, intertwining map]
\label{def:crossed-prod-by-groupoid}
Let $\G$ be a topological groupoid, and let $X$ be a topological space.
\begin{enumerate}
\item   A \emph{left action}
        of $\G$ on $X$ is given by a continuous anchor map $\alpha_X\colon X\to\G_0$ and a continuous
		action map $\G_1\ftimes{s}{\alpha_X}X \to X$, written $(g,x)\mapsto g\ast x$, such that
        $\alpha_X(g\ast x) = t(g)$, $h\ast(g\ast x) = (hg)\ast x$, and $1_{\alpha_X(x)} \ast x = x$
        for all $x\in X$ and $g,h\in\G_1$ such that these expressions are defined. We then refer to $X$ as
		a \emph{(left) $\G$-space}. A left $\G$-space $X$ is \emph{proper} if the anchor map $\alpha$ is proper.
\item   If $X$ is a left $\G$-space and $x\in X$, then the \emph{$\G$-orbit} of $x$, denoted $\G x$, is the set
        $\G x = \{ g\ast x : g\in\G_1, s(g) = \alpha(x) \}$. The \emph{isotropy group} of $x$, denoted $\G_x^x$, is the
        set of $g\in\G_1$ such that $s(g) = \alpha(x)$ and $g\ast x = x$.
\item   A $\G$-space is \emph{transitive} if, for each $x,y\in X$, there is a $g\in\G_1$ such that $g\ast x = y$.
\item   If $(X,\alpha_X)$ and $(Y,\alpha_Y)$ are left $\G$-spaces with anchors $\alpha_X$ and $\alpha_Y$, a \emph{$\G$-equivariant map}
        is a map $\psi\colon X\to Y$ that commutes with the action maps, $\psi(g\ast x) = g\ast\psi(x)$, and satisfies $\alpha_X = \alpha_Y\circ\psi$.
        An \emph{isomorphism of $\G$-spaces} is a $\G$-equivariant map such that $\psi\colon X\to Y$ is a homeomorphism.
        See \cite[Def.~2.10]{FarsiScullWattsPAMS}.

\item   If $\Phi\colon\G\to\HH$ is a homomorphism of topological groupoids, $X$ is a $\G$-space, and $Y$ is an $\HH$-space,
        a \emph{$\G$-$\HH$-equivariant map intertwined by $\Phi$} is a continuous map $\psi\colon X\to Y$ such that $\Phi_0\circ\alpha_X = \alpha_Y\circ\psi$ and
        $\psi(g\ast x) = \Phi_1(g)\ast\psi(x)$.

\item   If $X$ is a $\G$-space with anchor map $\alpha_X$, the \emph{translation groupoid} $\G\ltimes X$ is defined in the following way:
		the space of objects is $X$, space of arrows is $\G_1 \ftimes{s}{\alpha_X} X$,
		$s_{\G\ltimes X}(g,x) = x$, $t_{\G\ltimes X}(g,x) = g\ast x$,
		$u_{\G\ltimes X}(x) = (1_{\alpha_X(x)},x)$, $i_{\G\ltimes X}(g,x) = (g^{-1},g\ast x)$, and
		the product is given by $(h, g\ast x)(g,x) = (hg, x)$. We refer to the orbit space $|\G\ltimes X|$ as the \emph{orbit space of $X$}
        and denote it by $\lvert X\rvert$.
\end{enumerate}
In particular, any topological groupoid $\G$ acts on its unit space $\G_0$ on the left with anchor map the identity and action map given by
$g\ast x = t(g)$, and one checks that $\G\ltimes\G_0$ is isomorphic to $\G$.
In addition, if $X\subseteq\G_0$, then restricting the action of $\G$ on $\G_0$ to $X$, we have that $\G\ltimes X$ is isomorphic as a topological
groupoid to $\G_{|X}$. We will often identify $\G_{|X}$ with $\G\ltimes X$ via this isomorphism.
\end{definition}

If $X$ is a $\G$-space, then a consequence of the definition of the translation groupoid $\G\ltimes X$ is that the $\G$-orbit of $x\in X$
coincides with the orbit of $x$ in the groupoid $\G\ltimes X$, and the isotropy group $\G_x^x$ of $x$ in $\G$ is isomorphic to the isotropy group
$(\G\ltimes X)_x^x$ in $\G\ltimes X$ via $g\mapsto (g,x)$. If $S\subseteq X$, then the \emph{$\G$-saturation} of $S$ is the subset of $X$ given by
the saturation of $S$ in the groupoid $\G\ltimes X$. If $X$ is transitive, then $\G\ltimes X$ is easily seen to be transitive.

We will also at some points consider Lie groupoids, hence we recall the following.

\begin{definition}[Lie groupoid]
\label{def:proper-Lie-Groupoid}
A \emph{Lie groupoid} is a topological groupoid in which $\G_0$ and $\G_1$ are smooth manifolds (without boundary), all of the structure maps are smooth,
and the source map is a surjective submersion. A Lie groupoid is \emph{proper} if it is proper as a topological groupoid, \emph{cocompact} if
its orbit space $\vert\G\vert$ is compact, and \emph{\'{e}tale} if $s$ and $t$ are local diffeomorphisms.

If $\G$ and $\HH$ are Lie groupoids, a \emph{Lie groupoid homomorphism} $\Phi\colon\G\to\HH$ is a homomorphism $(\Phi_0,\Phi_1)$
of topological groupoids such that $\Phi_0$ and $\Phi_1$ are smooth.

If $\G$ is a Lie groupoid and $X$ is a smooth manifold, then a \emph{(left) Lie groupoid action} of $\G$ on $X$ is a groupoid action as
in Definition~\ref{def:crossed-prod-by-groupoid} where the anchor and action maps are smooth. In this case, $\G\ltimes X$ is a Lie groupoid.
\end{definition}

We will need the definition of the $\Gamma$-loop space, $\Gamma$-inertia groupoid, and $\Gamma$-inertia space from \cite[Sec.~4.1]{FarSeaJLMS};
see also \cite{FarsiPflaumSeaton2}.
	
\begin{definition}[$\Gamma$-loop space, $\Gamma$-inertia groupoid, $\Gamma$-inertia space]
\label{def:GammaInertia}
Let $\G$ be a topological groupoid and $\Gamma$ a finitely presented discrete group.
\begin{enumerate}
\item   Define  $\HOM(\Gamma,\G)$, the {\it $\Gamma$-loop space},  to be the set of continuous groupoid homomorphisms from $\Gamma$ to $\G$. If
        $\phi\in\HOM(\Gamma,\G)$,   let $\phi_0$ denote the map on objects and, by abuse of notation,
		also the image of the identity in $\G_0$ (the unique value of $\phi_0$). We let $\phi_1$ denote the map on arrows so that
		$\phi_1\colon\Gamma\to\G_{\phi_0}^{\phi_0}$ is a group homomorphism.
		We endow $\HOM(\Gamma,\G)$  with the compact-open topology.
			
\item   The groupoid $\G$ acts continuously on the topological space $\HOM(\Gamma,\G)$ by conjugation.
        The anchor map of the $\G$-action is given by $A_{\Gamma,\G}\colon\phi\mapsto\phi_0$, sometimes denoted simply by $A$ when there is no risk of confusion.
        If $g\in\G_1$ and $\phi\in\HOM(\Gamma,\G)$ with $s(g) = \phi_0$, the action of $g$ on $\phi$ is given by
        $(g\phi g^{-1})_0 = t(g)$ and $(g\phi g^{-1})_1(\gamma) = g\phi_1(\gamma)g^{-1}$ for $\gamma\in\Gamma$.
        The {\it $\Gamma$-inertia groupoid} $\Lambda_\Gamma\G$ is the translation groupoid $\G \ltimes  \HOM(\Gamma,\G)$.
        The {\it $\Gamma$-inertia space} is the orbit space $\lvert\Lambda_\Gamma\G\rvert$.
			
\item   The anchor map $A_{\Gamma,\G}$ induces a continuous map on orbit spaces
		$\lvert A_{\Gamma,\G}\rvert\colon\lvert\Lambda_\Gamma\G\rvert\to\lvert\G\rvert$ given by
		$\lvert A_{\Gamma,\G}\rvert\colon\G\phi\mapsto\G\phi_0$.
\end{enumerate}
When $\Gamma = \Z$, we call $\HOM(\Z,\G)$ the \emph{loop space} and $\Lambda_\Z\G$ the \emph{inertia groupoid}. We let
$\Lambda_\ell\G = \Lambda_{\Z^\ell}\G$.
\end{definition}

The $\Gamma$-inertia groupoid $\Lambda_\Gamma\G$ of a Lie groupoid need not be a Lie groupoid
in general, although it is in some cases.
If $\G$ is a proper \'{e}tale Lie groupoid and hence presents an orbifold, then the $\Gamma$-loop space $\HOM(\Gamma,\G)$ is a smooth
manifold and $\Lambda_\Gamma\G$ is as well proper and \'{e}tale, presenting the \emph{orbifold of $\Gamma$-sectors}
or the \emph{inertia orbifold} when $\Gamma = \Z$; see \cite{AdemLeidaRuan,FarsiSeaGenTwistSec,FarSeaJLMS}
If $V$ is an open subset of $\G_0$ such that $\G_{|V} = \G\ltimes V$ is isomorphic to $G\ltimes V$ for a finite
group $G$, i.e. $G\ltimes V$ is an \emph{orbifold chart}, then every $\phi\in\HOM(\Gamma,\G)$ with $\phi_0\in V$ defines
a homomorphism $\underline{\phi}=\pr_1\circ\phi\colon\Gamma\to G$ where $\pr_1$ denotes the projection $(\G\ltimes V)_1 = G\times V\to G$.
Then a neighborhood of $\phi$ in $\HOM(\Gamma,\G)$ is of the form $C_G(\underline{\phi})\ltimes V^{\langle\underline{\phi}\rangle}$
where $C_G(\underline{\phi})$ is the centralizer of $\underline{\phi}$ in $G$ and $V^{\langle\underline{\phi}\rangle}$ is
the subset of $V$ fixed by the image of $\underline{\phi}$. The map $\HOM(\Gamma,\G)\to\G_0$
given by $\phi\mapsto\phi_0$ identifies each connected component $A$ of $\HOM(\Gamma,\G)$
with a subset of $\G_0$ in such a way that $\G\ltimes A$, where $\G$ acts on $A$ by conjugation, is a subgroupoid
of $\G$. See \cite[Sec.~2.2B]{FarsiSeaGenTwistSec} for more details.

If $\G = G\ltimes M$ is a translation groupoid by a finite group $G$, i.e., $\G$ presents a \emph{global quotient orbifold}, then this yields
\begin{equation}
\label{eq:GlobQuotHom}
	\HOM(\Gamma,\G) = \bigsqcup\limits_{\underline{\phi}\in\HOM(\Gamma,G)} M^{\langle\underline{\phi}\rangle}.
\end{equation}
The restriction of the $G\ltimes M$-action on $\HOM(\Gamma,\G)$ is given by $C_G(\underline{\phi})\ltimes M^{\langle\underline{\phi}\rangle}$.

If $\G$ is a Lie groupoid that is not \'{e}tale, then $\HOM(\Gamma,\G)$ need not be a manifold so that $\Lambda_\Gamma\G$ is not a Lie groupoid.
However, when  $\Gamma=\Z$ and $\G$ is a proper Lie groupoid, $\Lambda\G$ was shown in \cite{FarsiPflaumSeaton2} to have the structure of a \emph{differentiable stratified groupoid}.

We now recall the definition of Morita equivalence for topological groupoids; see \cite{Metzler}.

\begin{definition}[Topological and Lie essential equivalence, Morita equivalence]
\mbox{}
\begin{enumerate}
\item   A homomorphism $\Phi\colon\G\to\HH$ between
        two topological groupoids $\G$ and $\HH$ is a \emph{topological essential equivalence}, also known as
        a \emph{topological weak equivalence}, if the map $t_\HH\circ\pr_1\colon\HH_1\ftimes{s_{\HH}}{\Phi_0}\G_0\to\HH_0$, where $\pr_1$
        is the projection onto the first factor, is a surjection admitting local sections and the diagram
\begin{equation}
\label{eq:WeakEquivFullyFaithful}
\xymatrix{
	\G_1
	\ar[d]_{(s_\G,t_\G)}
	\ar@{>}[r]^{\Phi_1}
	&\HH_1
	\ar[d]^{(s_\HH,t_\HH)}
	\\
	\G_0\times\G_0
	\ar[r]^{\Phi_0\times\Phi_0}
	&
	\HH_0\times\HH_0
}
\end{equation}
is a fibred product of spaces.
\item Two groupoids $\G$ and $\HH$ are \emph{topologically Morita equivalent} if there is a third topological groupoid
        $\K$ and topological essential equivalences $\G\leftarrow\K\rightarrow\HH$.
\end{enumerate}
If $\Phi\colon\G\to\HH$ is a homomorphism of Lie groupoids, the definition of \emph{Lie essential equivalence} is identical to (1) except that
the map $t_\HH\circ\pr_1$ is required to be a surjective submersion. The definition of \emph{Lie Morita equivalence} is then identical to (2);
see \cite[Sec.~5.4]{MoerdijkMrcun}.
\end{definition}

%------------------------------------------------------------------------------

\subsection{Affine Definable Spaces}
\label{subsec:BackDefinSpaces}

As in \cite{FarSeaJLMS}, we will consider topological groupoids whose orbit spaces have well-defined Euler characteristics.
Let us briefly recall this approach and refer the reader to \cite[Sec.~2.2, 3.1]{FarSeaJLMS} for more details.

An \emph{o-minimal structure} on $\R$ is a
sequence $(\mathcal{S}_n)_{n\in\N}$ such that each $\mathcal{S}_n$ is a collection
of subsets of $\R^n$ that is closed under finite unions and set differences satisfying the following:
$A\times\R,\R\times A\in \mathcal{S}_{n+1}$ for each $A\in \mathcal{S}_n$;
the diagonal $\{(x,\ldots,x):x\in\R\}\subset\R^n$ is an element of $\mathcal{S}_n$ for each $n$;
the projection of any $A\in \mathcal{S}_{n+1}$ to $\mathcal{S}_n$ as the first $n$ coordinates is an element of $\mathcal{S}_n$;
$\mathcal{S}_1$ is the set of finite unions of points and open intervals in $\R$; and $\mathcal{S}_2$ contains $\{(x,y)\in\R^2 : x < y\}$.
As an example, a subset of $\R^n$ is \emph{semialgebraic} if it is a finite union of solution sets to finite systems of
polynomial equations and inequalities, and the collection of semialgebraic sets is an o-minimal structure on $\R$.
See \cite[Ch.~1]{vandenDriesBook} for more details.

Fix an o-minimal structure on $\R$ that contains the semialgebraic sets and refer to elements of this
structure as \emph{affine definable sets} or simply \emph{definable sets}. Similarly, a function between definable sets is
\emph{definable} if its graph is a definable set.
The reader is welcome to use the structure of semialgebraic
sets itself, in which case \emph{definable} is synonymous with \emph{semialgebraic} and no background on general
o-minimal structures is necessary. An \emph{affine definable space} (\cite[Ch.~10, Sec.~1]{vandenDriesBook}, \cite[Def.~2.1]{FarSeaJLMS})
is then a regular topological space $X$ along with a topological embedding (i.e., a continuous
function that is a homeomorphism onto its image) $\iota_X\colon X\to \R^n$ whose image $\iota_X(X)$ is a definable subset of $\R^n$.
If $(X, \iota_X)$ is an affine definable space, a subset $A\subseteq X$ is \emph{definable} if $\iota_X(A)$ is a definable subset of $\R^n$.
A \emph{morphism of affine definable spaces} is a function that induces a definable function on the images of the respective embeddings.
A \emph{definable homeomorphism} is a morphism of definable sets that is at the same time a homeomorphism. Note that the graph of a bijective function
and its inverse coincide up to permuting coordinates so that the inverse of a definable bijective function is definable.

We will make frequent use of the following.

\begin{theorem}[{Trivialization Theorem, \cite[Ch.~10, Sec.~1]{vandenDriesBook}}]
\label{thrm:Trivialization}
Let $X$ and $Y$ be definable sets and let $f\colon X\to Y$ be a continuous definable function.
Then $Y$ admits a partition $Y = Y_1\sqcup\cdots\sqcup Y_r$ into finitely
many definable subsets $Y_1,\ldots,Y_r$ such that for each $i=1,\ldots,r$,
there is a definable set $E_i$ and a continuous definable function $\lambda_i\colon f^{-1}(Y_i)\to E_i$ such that
$(f_{|f^{-1}(Y_i)},\lambda_i)\colon f^{-1}(Y_i)\to Y_i\times E_i$ is a definable homeomorphism.

If $X_1,\ldots, X_k$ are finitely many definable subsets of $X$, then the $Y_i$, $E_i$, and
$\lambda_i$ can be chosen so that for each $i$, there are definable subsets
$F_1,\ldots, F_k$ of $E_i$ such that $(f_{|f^{-1}(Y_i)}, \lambda_i)\big(X_j\cap f^{-1}(Y_i)\big) = Y_i\times F_j$.
\end{theorem}

\begin{definition}[Definable trivialization, definably trivial]
\label{def:Trivialization}
A pair $(E_i,\lambda_i)$ as in Theorem~\ref{thrm:Trivialization} is called a \emph{definable trivialization of
$f_{|f^{-1}(Y_i)}$ respecting $X_1,\ldots, X_k$}, or simply a \emph{definable trivialization of
$f_{|f^{-1}(Y_i)}$} if there are no subsets $X_j$ under consideration. If such a definable trivialization exists,
we say that $f$ is \emph{definably trivial} on $Y_i$.
\end{definition}

\begin{remark}
\label{rem:Trivialization}
A definable trivialization identifies
$f^{-1}(Y_i)$ with $Y_i\times E_i$ in such a way that $f_{|f^{-1}(Y_i)}$ corresponds to the projection
onto $Y_i$. For each $y\in Y_i$, the map $(f_{|f^{-1}(Y_i)},\lambda_i)$
restricts to a definable homeomorphism of the fiber $f^{-1}(y)$ onto $E_i$ so that $E_i$ can be chosen to be
any such fiber. If $(X, \iota_X)$ and $(Y, \iota_Y)$ are affine definable spaces and $f\colon X\to Y$ is a continuous morphism
of affine definable spaces, then the theorem applies via application to the continuous definable function
$\iota_Y\circ f\circ\iota_X^{-1}\colon \iota_X(X)\to\iota_Y(Y)$.
\end{remark}

We will need the fact that affine definable spaces are closed under fibered products over morphisms, which we now establish.

\begin{lemma}
\label{lem:FibProdDefinable}
Let $X, Y, Z$ be affine definable spaces and $f\colon X\to Z$ and $g\colon Y\to Z$ be continuous morphisms of affine definable spaces. Then
$X\ftimes{f}{g}Y$ is an affine definable space.
\end{lemma}
\begin{proof}
Note that $X\ftimes{f}{g}Y$ is a subset of the affine definable space $X\times Y$.
As $f$ and $g$ are continuous morphisms of affine definable spaces, using Theorem~\ref{thrm:Trivialization} and Remark~\ref{rem:Trivialization},
we can decompose $Z$ into finitely many sets on which $f$
is definably trivial and finitely many sets on which $g$ is definably trivial. By intersecting these sets, we can decompose $Z$ into definable
subsets $Z = Z_1\sqcup\cdots\sqcup Z_r$ such that for each $i$, there are definable trivializations $(E_i,\lambda_i)$ of
$f_{|f^{-1}(Z_i)}$ and $(F_i,\mu_i)$ of $g_{|g^{-1}(Z_i)}$. That is,
$(f_{|f^{-1}(Z_i)},\lambda_i)\colon f^{-1}(Z_i)\to Z_i\times E_i$ and
$(g_{|g^{-1}(Z_i)},\mu_i)\colon g^{-1}(Z_i)\to Z_i\times F_i$ are definable homeomorphisms.
Let $P$ denote the continuous morphism of affine definable spaces
\[
    P = (f_{|f^{-1}(Z_i)}\circ\pr_1,g_{|g^{-1}(Z_i)}\circ\pr_2,\lambda_i\circ\pr_1,\mu_i\circ\pr_2)
        \colon  f^{-1}(Z_i)\times g^{-1}(Z_i)
                \to Z_i \times Z_i \times E_i \times F_i,
\]
and then the fibered product $f^{-1}(Z_i)\ftimes{f_{|f^{-1}(Z_i)}}{g_{|g^{-1}(Z_i)}}g^{-1}(Z_i)$
is the preimage under $P$ of the definable set $\{ (z, z, e, f) : z\in Z_i, e\in E_i, f\in F_i\}$.
Hence $f^{-1}(Z_i)\ftimes{f_{|f^{-1}(Z_i)}}{g_{|g^{-1}(Z_i)}}g^{-1}(Z_i)$ is a definable subset of $X\times Y$.
Then
\[
    X\ftimes{f}{g}Y = \bigsqcup\limits_{i=1}^r f^{-1}(Z_i)\ftimes{f_{|f^{-1}(Z_i)}}{g_{|g^{-1}(Z_i)}}g^{-1}(Z_i)
\]
is a union of definable sets and hence a definable subset of the affine definable space $X\times Y$.
\end{proof}

The proof of Lemma~\ref{lem:FibProdDefinable} also implies the following, which we will need in Section~\ref{subsec:BurnsideRing}.

\begin{corollary}
\label{cor:FibProductTrivial}
Let $X, Y, Z, E, F$ be affine definable spaces such that $X$ is definably homeomorphic to $Z\times E$ and $Y$ is definably homeomorphic to $Z\times F$.
Let $\rho_1\colon X\to Z$ denote the composition $X \to Z\times E \to Z$ where the second map is the projection, and similarly let
$\rho_2\colon Y\to Z$ denote the composition $Y\to Z\times F \to Z$.
Then $X\ftimes{\rho_1}{\rho_2}Y$ is definably homeomorphic to $Z\times E\times F$.
\end{corollary}

Definable sets have well-defined Euler characteristics that can be defined in terms of a cell decomposition \cite[Ch.~3, 4]{vandenDriesBook}.
We first recall the definition and notation for definable cells from \cite[Ch.~3 Sec.~2]{vandenDriesBook}. If $(i_1,\ldots,i_m)$ is a sequence of
$1$'s and $0$'s, then an \emph{$(i_1,\ldots,i_m)$-cell} is defined inductively as follows. A $(0)$-cell is a point in $\R$; a $(1)$-cell is an open
interval in $\R$; an $(i_1,\ldots,i_m,0)$-cell is the graph of a definable continuous function from an $(i_1,\ldots,i_m)$-cell $Y$ to $\R$; and an
$(i_1,\ldots,i_m,1)$-cell is the set of points $(p,x)$ where $p\in Y$ an $(i_1,\ldots,i_m)$-cell and $f(p) < y < g(p)$ for definable continuous
functions such that $f < g$ on $Y$; note that we allow $f$ to be the constant function $-\infty$ and $g$ to be $+\infty$.  The \emph{dimension} of
an $(i_1,\ldots,i_m)$-cell is $i_1 + i_2 + \cdots + i_m$. In general, the dimension of a definable set is the maximum dimension of a
$(i_1,\ldots,i_m)$-cell it contains. An \emph{open cell} is a $(1,\ldots,1)$-cell, and each $(i_1,\ldots,i_m)$-cell is (definably) homeomorphic to
an open cell of the same dimension via coordinate projection \cite[Ch.~3 (2.7)]{vandenDriesBook} (allowing $\R^0$, a one-point space, to be a
$(\,)$-cell, which is an open cell).

To define the Euler characteristic $\chi$ of an affine definable space $(X, \iota_X)$, identify $X$ with $\iota_X(X)\subseteq\R^n$.
Then $X$ can be written as a finite disjoint union of cells $C_1,\ldots,C_k$, and the Euler characteristic of $X$ is defined to be
\begin{equation}
\label{eq:EulerChar}
    \chi(X) = \sum_{i=1}^k (-1)^{\dim C_i};
\end{equation}
see \cite[Ch.~3 (2.11) and Chapter 4 (2.1)]{vandenDriesBook}. Note that $\chi$ is additive over finite unions, multiplicative over finite products,
and $\chi(\emptyset) = 0$.
This definition of the Euler characteristic agrees with other notions of $\chi$ in the case that $X$ is compact but
is not homotopy invariant in general. By \cite[Thrm.~2.2]{BekeInvarEC}, it is independent of the choice of the o-minimal structure and
a homeomorphism invariant of the underlying topological space.
As $\chi$ is finitely additive, it can be used as a measure to integrate bounded constructible
functions on an affine definable space $X$; recall that $f\colon X\to\Z$ is a \emph{constructible function} if $f^{-1}(k)$ is a definable
subset of $X$ for all $k\in\Z$ (\cite[Sec.~1-2]{ViroIntegralEulerChar}, \cite[Sec.~4]{CurryGhristEAEulerCalc}).
Specifically, the integral $\int_X f(x)\,d\chi(x)$ of $f(x)$ over $X$ with respect to $\chi$
is given by $\sum_{k\in\Z}k\chi\big(f^{-1}(k)\big)$, the finite sum of the Euler characteristics of level sets of $f$ multiplied
by the corresponding value of $f$.

We now establish the following generalization of the multiplicativity of the Euler characteristic
for fibrations to affine definable spaces, which will be used in Section~\ref{subsec:BurnsideRing}.

\begin{proposition}
\label{prop:Multiplicative}
Let $X$ and $Y$ be affine definable spaces and let $f\colon X\to Y$ be a continuous surjective morphism of affine definable spaces such that
for each $y\in Y$, the fiber $f^{-1}(y)$ is definably homeomorphic to a fixed definable space $F$. Then
\[
    \chi(X) = \chi(F)\chi(Y).
\]
\end{proposition}
\begin{proof}
By Theorem~\ref{thrm:Trivialization} and Remark~\ref{rem:Trivialization}, we can decompose $Y = Y_1\sqcup Y_2\sqcup\cdots\sqcup Y_k$
such that for each $i$, $f^{-1}(Y_i)$ is definably
homeomorphic to $Y_i\times F_i$ where $F_i$ is the preimage of an arbitrary point $y_i\in Y_i$ and via this homeomorphism, $f_{|f^{-1}(Y_i)}$
corresponds to the projection $Y_i\times F_i\to Y_i$. Then by the additivity of the Euler characteristic,
\begin{align*}
    \chi(X)     &=      \sum\limits_{i=1}^k \chi\big(Y_i\times F_i\big)
                \\&=    \sum\limits_{i=1}^k \chi(Y_i)\chi(F_i).
\end{align*}
Because each $F_i$ is definably homeomorphic to $F$, this is equal to
\begin{align*}
    \sum\limits_{i=1}^k \chi(F)\chi(Y_i)
                &=      \chi(F) \sum\limits_{i=1}^k \chi(Y_i)
                \\&=    \chi(F)\chi(Y).
        \qedhere
\end{align*}
\end{proof}

Finally, we recall the following from \cite[Ch.~10, Def.~(2.13)]{vandenDriesBook}, which will be an important hypothesis in Section~\ref{sec:Burnside}.

\begin{definition}[Definably proper equivalence relation]
\label{def:DefinProperEquiv}
If $X$ is an affine definable space, an  equivalence relation $E\subseteq X\times X$ on $X$ is \emph{definably proper} if $E$ is definable and
the projection $E\to X$ onto the first factor is definably proper, i.e., the preimage of a compact definable set is compact.
\end{definition}

%------------------------------------------------------------------------------

\subsection{Orbit space definable groupoids}
\label{subsec:BackOrbSpaceDefinGpoid}

An orbit space definable groupoid is, roughly, a groupoid whose orbit space and its partition into points with the same weak orbit type are sufficiently tame to allow for the
definition of the $\Gamma$-Euler characteristics.

To begin, we observe that from Definition~\ref{def:topological groupoid}, if $\G$ is a topological groupoid, each isotropy group $\G_x^x$ is a topological
group with the multiplication and topology that it inherits as a subgroupoid of $\G$. Further, it is well-known that if a topological group admits the structure
of a Lie group, then this structure is unique. We now recollect the definition from \cite[Def.~3.1]{FarSeaJLMS}, which assumes that each isotropy group
$\G_x^x$ carries a unique Lie group structure as above. Recall that the weak orbit type of a point $x\in\G_0$ or an orbit $\G x\in\lvert\G\rvert$ is
the isomorphism class of $\G_x^x$ as a topological group.

\begin{definition}[Orbit space definable groupoid]
\label{def:OrbitSpaceDefinable}
A topological groupoid $\G$ is \emph{orbit space definable} if the there is an embedding $\iota_{\vert\G\vert}\colon\vert\G\vert\to\R^n$ with respect to which
$(\vert\G\vert,\iota_{\vert\G\vert})$ is an affine definable space, each isotropy group $\G_x^x$ is a compact Lie group with respect to the topological
and definable structures it inherits as a subgroupoid of $\G$, and the partition of $\vert\G\vert$ into points with the same weak orbit type is a finite partition of
$\vert\G\vert$ by definable subsets.
\end{definition}

Because any continuous homomorphism of Lie groups is smooth \cite[Cor.~1.10.9]{DuistermaatKolk}, if $x,y\in\G_0$ have the same weak orbit
type in an orbit space definable groupoid $\G$, then $\G_x^x$ and $\G_y^y$ are isomorphic as Lie groups.

\begin{remark}
\label{rem:LieGroup}
Our primary cases of interest are those where each $\G_x^x$ is a compact Lie group, in which case $\G_x^x$ admits a faithful finite-dimensional representation and hence an isomorphism with a subgroup of $\GL(n,\R)$, so we have included this in the definition here and in \cite[Def.~3.1]{FarSeaJLMS}.
(Note that a compact topological group is a Lie group if and only if it admits such a representation; see \cite[Cor.~2.40 and Def.~2.41]{HofmannMorriss}).
In particular, the definition of the $\Gamma$-orbifold Euler characteristic makes sense whenever $\G_x^x\backslash\HOM(\Gamma,\G_x^x)$ has a well-defined Euler characteristic, which is satisfied when $\G_x^x$ is a compact Lie group; see Example~\ref{ex:GammaEC}.
This all said, the requirement in Definition~\ref{def:OrbitSpaceDefinable} that $\G_x^x$ is a compact Lie group can be relaxed in some applications.
In particular, Definition~\ref{def:UnivEC} of the universal Euler characteristic can easily be adapted,
and Theorem~\ref{thrm:ECAddUnique} holds, if the
isotropy groups are assumed merely to be topological groups.

Again by the fact that continuous homomorphisms of Lie groups are smooth \cite[Cor.~1.10.9]{DuistermaatKolk},
if a topological group admits the structure of a Lie group, then this structure is unique.
If $G$ is a definable group, i.e., a group that is as well an affine definable space such that the group operation and inversion are morphisms of affine
definable spaces, then by \cite[Prop.~2.5 and Rem.~2.6]{Pillay}, $G$ can be equipped with a topology compatible with the definable structure with
respect to which $G$ is a Lie group. If $G$ is compact and hence admits a faithful finite-dimensional representation, note that the definable structure
inherited from this representation may differ from its original definable structure, as a compact definable group need not admit a faithful definable
representation; see \cite[pp.~740-1]{ParkSuhLinEmbed}.
\end{remark}

Examples of orbit space definable groupoids include translation groupoids by semialgebraic actions \cite[Cor.~3.5]{FarSeaJLMS}.
Further, by \cite[Lem.~3.4]{FarSeaJLMS}, cocompact proper Lie groupoids are orbit space definable, as are groupoids of the form
$\G_{|\pi^{-1}(U)} = \G\ltimes\pi^{-1}(U)$ where $\G$ is a cocompact proper Lie groupoid and $U$ is a definable subset of $\vert\G\vert$.

%------------------------------------------------------------------------------

\subsection{Definable groupoids}
\label{subsec:BackDefinGpoids}

In many examples of topological groupoids, the object space, arrow space, and structure maps are themselves definable. In this section,
we consider the class of \emph{definable groupoids} with this property. This is in contrast to the orbit space definable groupoids
recalled in the last section, for which only the orbit space is assumed to have sufficiently controlled topology to admit the definition of the
$\Gamma$-Euler characteristics.

\begin{definition}[Definable groupoid, definable groupoid homomorphism]
\label{def:DefinGrpoid}
A \emph{definable groupoid} is a topological groupoid $\G$ such that the space of objects $\G_0$ and space of arrows $\G_1$ are affine definable
spaces and the structure maps $s, t\colon\G_1\to\G_0$, $i\colon\G_1\to\G_1$, $u\colon\G_0\to\G_1$, and $m\colon\G_1\ftimes{s}{t}\G_1\to\G_1$
are morphisms of affine definable spaces; note that $\G_1\ftimes{s}{t}\G_1$ is an affine definable space by Lemma~\ref{lem:FibProdDefinable}.
This in particular means that there are topological embeddings $\iota_{\G_1}\colon\G_1\to\R^n$
and $\iota_{\G_0}\colon\G_0\to\R^k$ whose images are definable sets with compatible definable structures given by the morphism $u$.

If $\G$ and $\HH$ are definable groupoids, a \emph{definable groupoid homomorphism} $\Phi\colon\G\to\HH$ is a continuous homomorphism $(\Phi_0,\Phi_1)$
of topological groupoids such that $\Phi_0$ and $\Phi_1$ are morphisms of affine definable spaces. Because the inverse of a definable function is definable,
we can define a \emph{definable groupoid isomorphism} to be a definable groupoid homomorphism that is an isomorphism of topological groupoids.
The inverse of a definable groupoid isomorphism is then a definable groupoid isomorphism.
\end{definition}

Given $\iota_{\G_1}$, we may define $\iota_{\G_0} = \iota_{\G_1}\circ u$ so that $\iota_{\G_0}(\G_0)\subset\iota_{\G_1}(\G_1)\subseteq\R^n$.
In particular, some authors identify $\G_0$ with $u(\G_0)\subseteq\G_1$.

Definable groupoids that are at the same time Lie groupoids were considered in \cite{Tanabe}; see Remark~\ref{rem:DefinLieFiniteOrbit}.
Other examples include translation groupoids by semialgebraic group actions or definable actions of compact Lie groups;
see Example~\ref{ex:GrpTranslation}, and see also Section~\ref{sec:Examples} for more examples.

Note that definable groupoids need not be orbit space definable. For example, the translation groupoid
$\GL(n,\R)\ltimes\R^n$ corresponding to the standard action of $\GL(n,\R)$ on $\R^n$ is definable in the o-minimal
structure of semialgebraic sets, yet the isotropy group of the origin is not compact.
Similarly, an orbit space definable groupoid $\G$ need not admit the structure of a definable groupoid.
As an example, if $X$ is any topological space, the \emph{pair groupoid of $X$} is the topological groupoid with
object space $X$, arrow space $X\times X$, source and target maps given by the projections, and multiplication
given by $(x,y)(y,z) = (x,z)$. Because a pair groupoid has trivial isotropy groups and is transitive so that $\lvert X\rvert$
is a point, the pair groupoid of any topological space $X$ is orbit space definable, though $X$ can be taken to be a topological space that does not admit the
structure of an affine definable space.

In \cite[Prop.~3.5]{FarSeaJLMS}, it is shown that if $\G$ is a proper, source-proper definable groupoid such that
the partition of $\lvert\G\rvert$ into points with the same weak orbit type
is a partition into finitely many definable sets, then $\G$ is orbit space definable.
Note that this implicitly used the result \cite[Prop.~2.5 and Rem.~2.6]{Pillay} that definable
groups admit a Lie group structure; see Remark~\ref{rem:LieGroup}.

If $\G$ is a definable groupoid and $x\in\G_0$, then the orbit $\G x$ of $x$ can be expressed as $t(s^{-1}(x))$ and therefore is a definable
subset of $\G_0$. The restriction $\G_{|\G x} = \G\ltimes (\G x)$ of $\G$ to the orbit $\G x$ is a transitive definable groupoid.

\begin{definition}[Orbit type in a definable groupoid]
\label{def:OrbitTypeGrpoid}
Let $\G$ be a definable groupoid. The \emph{orbit type} of a point $x\in\G_0$ is the definable isomorphism class of the groupoid
$\G_{|\G x} = \G\ltimes (\G x)$ given by the restriction of $\G$ to the orbit $\G x$ of $x$. We also say that two points $x$ and $y$ have the \emph{same orbit type}
if the groupoids $\G_{|\G x} = \G\ltimes (\G x)$ and $\G_{|\G y} = \G\ltimes (\G y)$ are definably isomorphic. We will identify the orbit type of
$x$ with the orbit $\G x$ when $\G$ is understood.
\end{definition}

Recall that the weak orbit type of $x\in\G_0$ is the isomorphism class of $\G_x^x$.
In a transitive groupoid, the isotropy groups $\G_x^x$ and $\G_y^y$ of any two points $x,y\in\G_0$ are isomorphic. Hence,
if $\G$ is definable and $x,y\in\G_0$ have the same orbit type, they have the same weak orbit type.
That is, the partition into points with the same orbit is finer than the partition into points with the same weak orbit type.

If $G$ is a topological group and $X$ is a $G$-space, then in the context of group actions, points $x, y\in X$ are said to have the same orbit type
if their isotropy groups are conjugate in $G$; see \cite[Ch.~1, Sec.~4]{Bredon}. In the translation groupoid $G\ltimes X$,
this is a stronger condition than $x$ and $y$ having the same
orbit type in the sense of Definition~\ref{def:OrbitTypeGrpoid}; see Example~\ref{ex:GrpTranslation}.

It is not clear whether a definable groupoid can have infinitely many orbit types in general. However, we have the following
restriction on the topology of the orbits and orbit types. See also Proposition~\ref{prop:TransitiveFiniteOrbitType}.

\begin{proposition}
\label{prop:FiniteHomeoTypes}
Let $\G$ be a proper, source-proper definable groupoid.
There are finitely many definable homeomorphism classes of $\G$-orbits in $\G_0$.
Among the groupoids $\{ \G_{|\G x} = \G\ltimes(\G x) : \G x\in |\G|\}$, there are finitely many definable homeomorphism classes of
arrow spaces. As well, there are finitely many homeomorphism classes of isotropy groups $\G_x^x$.
\end{proposition}
\begin{proof}
By \cite[Prop.~3.5]{FarSeaJLMS}, the orbit space $\lvert\G\rvert$ is an affine definable space and the orbit map
$\pi\colon\G_0\to\lvert\G\rvert$ is a continuous morphism of affine definable spaces. Then by Theorem~\ref{thrm:Trivialization},
$\lvert\G\rvert = \lvert\G\rvert_1\sqcup\cdots\sqcup\lvert\G\rvert_r$ where, for each $i$, $\pi^{-1}(\lvert\G\rvert_i)$
is definably homeomorphic to $\lvert\G\rvert_i\times F_i$ where $F_i$ is an affine definable space definably homeomorphic to
the orbit of any point in $\pi^{-1}(\lvert\G\rvert_i)$. In particular, the homeomorphism class of the fiber $\pi^{-1}(x)$
is constant on $\lvert\G\rvert_i$, implying that there are finitely many homeomorphism classes of orbits in $\G_0$.
Because $\pi$ corresponds to the projection, for any $x\in X$ any arrow with source in a fiber $\{x\}\times F_i$
must have target in the same fiber as well, i.e., $\{x\}\times F_i$ is $\G$-invariant.
So without loss of generality, let us assume that $r = 1$, i.e., that $\G_0$ is definably homeomorphic to $\lvert\G\rvert\times F$.

Now, consider the map $(\pi,\pi)\circ(s,t)\colon\G_1\to\lvert\G\rvert\times\lvert\G\rvert$. Because the source and target of any
$g\in\G_1$ are in the same orbit, the image of this map is the diagonal $D$ in $\lvert\G\rvert\times\lvert\G\rvert$,
so we may compose it with the projection $\pr_1\colon\lvert\G\rvert\times\lvert\G\rvert\to\lvert\G\rvert$ to obtain a definable continuous morphism
$\pr_1\circ(\pi,\pi)\circ(s,t)\colon\G_1\to\lvert\G\rvert$ sending $g\mapsto \G s(g)$.
Again by Theorem~\ref{thrm:Trivialization}, we may decompose $\lvert\G\rvert$
into finitely many sets whose preimage under this map is definably homeomorphic to $\lvert\G\rvert_i\times E_i$ and such that
$\pr_1\circ(\pi,\pi)\circ(s,t)$ corresponds to the projection. Once again, we restrict to one of these $\G$-invariant sets
and assume that $\G_1$ is definably homeomorphic to $\lvert\G\rvert\times E$. Note that the fiber $\{\G x\}\times E$ of an orbit
$\G x \in\lvert\G\rvert$ is given by
$s^{-1}(\G x)\cap t^{-1}(\G x)$, the set of arrows restricted to the orbit of $x$. Hence, the spaces of arrows restricted to an orbit are
definably homeomorphic to one another.

Now, consider the definable subset $\Lambda\G_1$ of $\G_1$ consisting of $g$ such that $s(g) = t(g)$;
see Definition~\ref{def:GammaInertia}.
The map $\pi\circ s\colon\Lambda\G_1\to\lvert\G\rvert$ is continuous, definable, and $\G$-invariant, so we may again use Theorem~\ref{thrm:Trivialization}
to restrict to a subset on which this map is definably trivial. Once again, this set is $\G$-invariant. Hence, we
may now assume without loss of generality that the isotropy groups of points in $\G_0$ are definably homeomorphic.

It follows that $\G$ is a finite union of groupoids that have the same definable isomorphism classes of object spaces,
arrow spaces, and isotropy groups in their orbit types.
\end{proof}

Let $\G$ be a proper, definable groupoid. If the isotropy groups of $\G$ are compact Lie groups,
then by Proposition~\ref{prop:FiniteHomeoTypes}, there are finitely many homeomorphism types of $\G_x^x$.
By \cite[Thrm.~2]{Boekholt}, two compact Lie groups that are homeomorphic are locally isomorphic, and by
\cite[Cor.~3]{BaumLocalIso}, a local isomorphism class of compact Lie groups contains finitely many isomorphism classes.
We therefore have the following.

\begin{corollary}
\label{lem:FiniteIsotropyGroups}
Let $\G$ be a proper, source-proper definable groupoid and assume the isotropy groups of $\G$ are compact
Lie groups. There are finitely many isomorphism classes of isotropy groups $\G_x^x$ as definable topological groups.
That is, $\G$ has finitely many weak orbit types.
\end{corollary}

\begin{remark}
\label{rem:DefinLieFiniteOrbit}
If $\G$ is a $\mathcal{C}^r$ Lie groupoid, $1\leq r<\infty$ that is at the same time a definable groupoid, then Tanabe has shown that there is a Whitney stratification
of $\G_0$ into finitely many definable $\mathcal{C}^r$ submanifolds $M_i$ such that the restriction of $\G$ to each $M_i$ is a regular groupoid; see \cite[Thrm.~3,1]{Tanabe}.
In the o-minimal structure of semialgebraic sets, this holds as well for definable groupoids that are at the same time analytic Lie groupoids by \cite[Thrm.~1,1]{Tanabe}.
For regular proper $\mathcal{C}^\infty$ Lie groupoids, Weinstein's linearization theorem  \cite[Thrm.~4.1]{WeinsteinLineariz} demonstrates that there is a neighborhood of
each orbit $\G x$ of finite type such that the restriction of $\G$ to $\G x$ is $\mathcal{C}^\infty$-isomorphic to $\G\ltimes N(\G x)$ where $N(\G x)$ denotes the normal bundle
to $\G x$ in $\G_0$. Using the fact that the representation of the isotropy group $\G_x^x$ of $x$ on the normal space has finitely many isotropy types, the groupoid
$\G\ltimes N(\G x)$ contains finitely many definable isomorphism classes of transitive Lie subgroupoids.
Alternatively in the $\mathcal{C}^\infty$ case, one can use the linearization theorem of Zung \cite[Thrm.~2.4]{Zung} to describe the local structure of the groupoid.
See also \cite[Prop.~3.6]{PPTOrbitSpace}, where it is shown that every proper Lie groupoid is Morita equivalent to a Lie groupoid such that each orbit has finite type.
Hence, if $\G$ is an analytic definable Lie groupoid in the o-minimal structure of semialgebraic sets such that
$\G_0$ is compact and each orbit has finite type, then $\G$ contains finitely many $\mathcal{C}^\infty$-isomorphism classes of transitive subgroupoids.
However, it is not clear if the identification of a neighborhood of $\G x$ with $\G\ltimes N(\G x)$ can be accomplished with a definable map,
and hence whether $\G$ has finitely many orbit types in the sense of Definition~\ref{def:OrbitTypeGrpoid}.
\end{remark}

%------------------------------------------------------------------------------

\subsection{Definable $\G$-spaces}
\label{subsec:BackDefinGSpaces}

Let $\G$ be a definable groupoid. In this section, we consider the structure of a topological $\G$-space that has a compatible definable structure.
Recalling Definition~\ref{def:crossed-prod-by-groupoid} and Lemma~\ref{lem:FibProdDefinable}, we begin with the following.

\begin{definition}[Definable $\G$-space, equivariant map]
\label{def:DefinGSpace}
Let $\G$ be a definable groupoid. A \emph{definable (left) $\G$-space} is a topological $\G$-space $X$ that is an affine definable space
such that the anchor and action maps are morphisms of affine definable spaces. A \emph{definable $\G$-equivariant map} of $\G$-spaces
is a $\G$-equivariant map, see Definition~\ref{def:crossed-prod-by-groupoid}(3), that is as well a morphism of affine definable spaces.
If $\Phi\colon\G\to\HH$ is a homomorphism of
definable groupoids, then a \emph{definable $\G$-$\HH$-equivariant map intertwined by $\Phi$} is a $\G$-$\HH$-equivariant map $\psi\colon X\to Y$
intertwined by $\Phi$, see Definition~\ref{def:crossed-prod-by-groupoid}(5), that is as well a morphism of affine definable spaces.

Let $X$ be a definable $\G$-space. Recall that the \emph{orbit space} $\lvert X\rvert$ is the orbit space of the translation groupoid
$\G\ltimes X$. We say that $X$ has a \emph{definable $\G$-quotient} if $\lvert X\rvert$ admits the structure of an affine definable space
with respect to which the orbit map $\pi_X\colon X\to \lvert X\rvert$ is a morphism of affine definable spaces.
\end{definition}

If $X$ is a definable left $\G$-space, then the definition of the translation groupoid $\G\ltimes X$ implies that $\G\ltimes X$
is a definable groupoid. As well, if $x\in X$, then the $\G$-orbit $\G x$ is the image of the set $s^{-1}\big(\alpha(x)\big) \times \{x\}$
under the action map and therefore a definable subset of $X$.

If $\G$ is a definable groupoid, then $\G_0$ is a definable left $\G$-space where the anchor map is the identity and the action map is given by
$(g,x)\mapsto t(g)$. In particular, we say $\G$ has a \emph{definable quotient} if $\G_0$ has a definable
$\G$-quotient as a $\G$-space. This is the case when the equivalence relation on $\G_0$ of being in the same orbit is definably proper,
see Definition~\ref{def:DefinProperEquiv} and \cite[Ch.~10, (2.13) and Thrm.~(2.15)]{vandenDriesBook}. This in particular holds when $\G$ is proper and source-proper
by \cite[Proposition~3.5]{FarSeaJLMS}.

\begin{proposition}
\label{prop:TransitiveG-spaces}
Let $\G$ be a definable groupoid, and let $X$ be a transitive $\G$-space. Pick any point $x\in X$
and let $\HH = \G_{|\G \alpha(x)} = \G\ltimes (\G \alpha(x))$ denote the subgroupoid of $\G$ given by the restriction to the orbit $\G \alpha(x)$.
Then $X$ is a transitive $\HH$-space, and the $\G$-action is determined by the $\HH$-action.
\end{proposition}
\begin{proof}
Let $y \in X$. To see that $\alpha(y)\in \G \alpha(x)$, note that as $X$ is transitive, there is a $g\in\G_1$ such that $g\ast x = y$.
It follows that $s(g) = \alpha(x)$ and $t(g) = \alpha(g\ast x) = \alpha(y)$ so that $\alpha(y)\in \G \alpha(x)$. Therefore,
$\alpha(X) \subseteq \G \alpha(x)$. Hence, the only elements $h\in\G_1$ such that $g\ast z$ is defined for some point $z\in X$
are those such that $s(h)\in \G \alpha(x)$ and $t(h)\in \G \alpha(x)$, i.e., arrows in $\HH$.
\end{proof}

Using Proposition~\ref{prop:TransitiveG-spaces}, we define below the orbit type of a point $x$ in a definable $\G$-space $X$ to be
the equivalence class of the $\G_{|\G\alpha(x)}$-space $\G x$
via definable equivariant homeomorphisms intertwined by groupoid isomorphisms in the sense of Definition~\ref{def:DefinGSpace}.

\begin{definition}[Orbit type in a definable $\G$-space]
\label{def:OrbitTypeG-Space}
Let $\G$ be a definable groupoid, let $X$ be a definable left $\G$-space with anchor map $\alpha$, let $x\in X$.
If $x,y\in X$, then $x$ and $y$ have the \emph{same orbit type} if the following hold:
\begin{itemize}
\item   There is a definable groupoid isomorphism $\varphi\colon\G\ltimes(\G \alpha(x))\to\G\ltimes(\G \alpha(y))$
        of the transitive definable groupoids acting on $\G x$ and $\G y$ described in Proposition~\ref{prop:TransitiveG-spaces},
        i.e., $\alpha(x)$ and $\alpha(y)$ have the same orbit type in $\G$.
\item   There is a map $\psi\colon\G x\to\G y$ that is a $\big(\G\ltimes(\G \alpha(x))\big)$-$\big(\G\ltimes(\G \alpha(y))\big)$-equivariant
        definable homeomorphism intertwined by $\varphi$; see Definitions~\ref{def:DefinGSpace} and \ref{def:crossed-prod-by-groupoid}(5).
\end{itemize}
As $\varphi$ and $\psi$ are invertible, having the same orbit type is an equivalence relation on $X$.
\end{definition}

\begin{remark}
\label{rem:OrbitTypeGroupoidVsSpace}
Let $\G$ be a definable groupoid and let $X$ be a definable $\G$-space. We remark that if $x, y\in X$ have the same orbit type in $X$ then they
have the same orbit type in the groupoid $\G\ltimes X$ in the sense of Definition~\ref{def:OrbitTypeGrpoid}.
Indeed, recall that the space of arrows of the translation groupoid $\G\ltimes(\G x)$ is given by $\G_1 \ftimes{s}{\alpha_X} (\G x)$ and hence
depends only on $\G\ltimes(\G\alpha(x))$; that is, $\G\ltimes(\G x) = \G_{|\G\alpha(x)}\ltimes(\G x)$. Hence, there is a definable isomorphism
$\G\ltimes(\G x)\to \G\ltimes(\G y)$, where the map on objects is given by the $\big(\G\ltimes(\G x)\big)$-$\big(\G\ltimes(\G y)\big)$-equivariant
definable homeomorphism $\psi$, and the map on arrows is given by the definable groupoid isomorphism $\varphi$,
indicated by Definition~\ref{def:OrbitTypeG-Space}.
\end{remark}

The converse of Remark~\ref{rem:OrbitTypeGroupoidVsSpace}, however, is false; points with the same orbit type in a translation groupoid $\G\ltimes X$ need not have the same
orbit type as points in the $\G$-space $X$, which we illustrate with the following.

\begin{example}
\label{ex:OrbitTypesDiffer}
Let $O = \{a,b\}$ be a two-point space, let $G = S_2$ be the symmetric group considered as a groupoid with a single object, and let $G$ act on $O$
by permutations. Let $\G = G\ltimes O$ denote the translation groupoid, which is isomorphic to the pair groupoid on $O$.
The transitive groupoid $\G$ acts on its object space $\G_0 = O$, and the translation groupoid $\G\ltimes O$ is isomorphic to the groupoid
$G\ltimes O$.

Now let $X = O_1\sqcup O_2$ where $O_i = \{a_i, b_i\}$ for $i=1,2$. Let $\HH = G \sqcup (G\ltimes O_2)$ be the groupoid with
$\HH_0 = \{p,a_2,b_2\}$ where $p$ is the single object of the groupoid $G$. Define an $\HH$-action on $X$ as follows.
The anchor map $\alpha$ sends $O_1$ to $p$ and is the identity on $O_2$.
Define an action of $G$ on $O_1$ by permutations and let $G\ltimes O_2$ act on $O_2$ as above.
The groupoid $\HH\ltimes X$ has two orbits, $O_1$ and $O_2$, and as $\HH\ltimes O_1$ is isomorphic to $\HH\ltimes O_2$,
they have the same orbit type in the groupoid $\HH\ltimes X$. However, there is no isomorphism from $G = \HH_{|\alpha(O_1)} = \HH\ltimes\{p\}$ to
$G\ltimes O_2 = \HH_{|\alpha(O_2)} = \HH\ltimes O_2$ as the object spaces have a different number of elements,
so that $O_1$ and $O_2$ have distinct orbit types in the $\HH$-space $X$.
\end{example}

More generally, for any group $G$ that acts definably, transitively, and nontrivially on a definable space $O$, the groupoids
$G$ and $G\ltimes O$ both act on $O$ and are non-isomorphic, though the translation groupoids $G\ltimes O$ and $(G\ltimes O)\ltimes O$ are isomorphic.
This illustrates the general fact that if $\G$ and $\HH$ are transitive groupoids, translation groupoids $\G\ltimes (\G x)$ and $\HH\ltimes (\HH y)$
may be isomorphic even if there is no equivariant homeomorphism $\G x\to\HH y$ intertwined by a groupoid isomorphism.

In the case of the $\G$-space $\G_0$, however, the two notions of orbit type coincide.

\begin{proposition}
\label{prop:G0OrbitType}
Let $\G$ be a definable groupoid. If $x, y\in\G_0$ have the same orbit type in the groupoid $\G$,
then they have the same orbit type in the $\G$-space $\G_0$.
\end{proposition}
\begin{proof}
Suppose $x$ and $y$ have the same orbit type in $\G$ so that, as the anchor map is the identity, there is a definable isomorphism
$\varphi\colon\G_{|\G x} = \G\ltimes (\G x)\to\G_{|\G y} = \G\ltimes (\G y)$. Then the map $\varphi_0$ on objects is a definable homeomorphism
$\G x\to \G y$ that is a $\big(\G\ltimes (\G x)\big)$-$\big(\G\ltimes (\G y)\big)$-equivariant map intertwined by $\varphi$.
\end{proof}

Let us recall the following important examples of $\G$-spaces from \cite[Ex~2.7(2), Rem~2.11, and Rem~3.6]{FarsiScullWattsPAMS}. Note that this reference
primarily uses right $\G$-spaces, so the results have been translated to the context of left $\G$-spaces. In addition, the reference considers topological
groupoids and $\G$-spaces, though in the context of a definable groupoid $\G$, the definable structures are straightforward to verify.

\begin{example}{\cite{FarsiScullWattsPAMS}}
\label{ex:PAMSsummary}
If $\G$ is a definable groupoid $\G$ and $x\in\G_0$, then $s^{-1}(x)$ is a definable left $\G$-space with anchor given by the target map and action given by
the groupoid product $g^\prime\ast g = g^\prime g$ for $g\in s^{-1}(x)$ and $g^\prime\in\G_1$ with $s(g^\prime) = t(g)$. Similarly, the group
$\G_x^x$ acts on $s^{-1}(x)$ on the right by groupoid multiplication. The left $\G$-action and right $\G_x^x$ action commute so that
$s^{-1}(x)/\G_x^x$ inherits a left $\G$-action. Specifically, letting $[g]$ denote the $\G_x^x$-orbit of $g\in s^{-1}(x)$, the action is
given by $g^\prime\ast[g] = [g^\prime g]$. If $s^{-1}(x)/\G_x^x$ admits a definable $\G$-quotient, e.g. if $\G_x^x$ is a compact Lie group,
then the $\G$-action on $s^{-1}(x)/\G_x^x$ is definable.

If $X$ is a definable left $\G$-space with anchor $\alpha$ and $x\in\G_0$ is a point in the image of $\alpha$, then the $\G$-action on $X$ restricts
to a definable left $\G_x^x$-action on $\alpha^{-1}(x)$. Conversely, starting with $\alpha^{-1}(x)$ as a left $\G_x^x$-space,
then $s^{-1}(x)\times \alpha^{-1}(x)$ is a right $\G_x^x$-space
with action given by $(g,y)\ast h = (gh, h^{-1}y)$; we denote the $\G_x^x$-orbits with $[g,y]$. The left $\G$-action on $s^{-1}(x)\times \alpha^{-1}(x)$
with anchor $\beta(g,y) = t(g)$ given by $f\ast(g,y) = (fg,y)$ commutes with the right $\G_x^x$-action so that
$s^{-1}(x)\times_{\G_x^x} \alpha^{-1}(x) = (s^{-1}(x)\times \alpha^{-1}(x))/\G_x^x$ is a left $\G$-space with anchor also denoted $\beta$ given by $\beta([g,y]) = t(g)$
and action given by $f\ast[g,y] = [fg,y]$.

By \cite[Prop.~3.7]{FarsiScullWattsPAMS}, if $\G$ is transitive and source-proper and
$X$ is a proper left $\G$-space, then $s^{-1}(x)\times_{\G_x^x} \alpha^{-1}(x)$ is $\G$-homeomorphic to $X$.
Once again, if $s^{-1}(x)\times_{\G_x^x} \alpha^{-1}(x)$ admits a definable $\G$-quotient, e.g. if $\G_x^x$ is a compact Lie group,
then the $\G$-action on $s^{-1}(x)\times_{\G_x^x} \alpha^{-1}(x)$ is definable, and the $\G$-homeomorphism from
$s^{-1}(x)\times_{\G_x^x} \alpha^{-1}(x)$ to $X$ is a definable $\G$-equivariant homeomorphism.
\end{example}

Using Example~\ref{ex:PAMSsummary}, we have the following.

\begin{proposition}
\label{prop:TransitiveFiniteOrbitType}
Let $\G$ be a transitive source-proper definable groupoid and let $X$ be a definable proper left $\G$-space.
If there is a point $x\in\G_0$ in the image of $\alpha$ such that $\alpha^{-1}(x)$ has finitely many $\G_x^x$-conjugacy classes of isotropy groups
as a $\G_x^x$-space, then $X$ has finitely many orbit types.
\end{proposition}
\begin{proof}
Note that the definable $\G$-space $X$ is by definition an affine definable space and hence regular, in particular Hausdorff.
Using \cite[Prop.~3.7]{FarsiScullWattsPAMS}, we identify $X$ with $s^{-1}(x)\times_{\G_x^x} \alpha^{-1}(x)$
as a left $\G$-space. Choose $[g_1,y_1],[g_2,y_2]\in s^{-1}(x)\times_{\G_x^x} \alpha^{-1}(x)$ such that the $\G_x^x$-stabilizers of $y_1$ and $y_2$ are conjugate
in $\G_x^x$; this condition does not depend on the choice of representatives
$(g_i,y_i)$ of the $\G_x^x$-orbit $[g_i,y_i]$, $i=1,2$. Let $O_i$ denote the $\G$-orbit of $[g_i, y_i]$. We construct
a $\G$-equivariant definable homeomorphism $\psi\colon O_1\to O_2$. Note that as $\G$ is transitive, there is only one
orbit in $\G_0$ so the isomorphism $\varphi$ in Definition~\ref{def:OrbitTypeG-Space} can be taken to be the identity.

Fix a representative $(g_1,y_1)$ of the $\G_x^x$-orbit $[g_1,y_1]$ and let $H$ denote the $\G_x^x$-isotropy group of $y_1$.
Choose a representative $(g_2,y_2)$ of the $\G_x^x$-orbit $[g_2,y_2]$ such that $H$ is also the $\G_x^x$-isotropy group of $y_2$.
Define $\gamma = g_1 g_2^{-1}$, and then $\gamma$ is an arrow from $t(g_2)$ to $t(g_1)$.
We define $\psi$ as follows. Any element of $O_1$ can be written as $f\ast[g_1,y_1] = [f g_1, y_1]$ for some $f\in\G_1$ with $s(f) = t(g_1)$,
i.e., as the $\G_x^x$-orbit of a point of the form $(f g_1, y_1) \in s^{-1}(x)\times \alpha^{-1}(x)$.
We define the image under $\psi$ to be the $\G_x^x$-orbit of $(f \gamma g_2, y_2)$. That is,
\[
    \psi\big([f g_1, y_1]\big)
        =   [f \gamma g_2, y_2].
\]
It follows from this description of $\psi$ and the definability of the groupoid operations that $\psi$ is definable.
We now confirm that $\psi$ is as well continuous.
Note that an arbitrary element $[fg_1, y_1]$ of $O_1$ can be specifically represented as $(fg_1 h, h^{-1} y_1)$ where $h\in \G_x^x$.
We can similarly define $\psi$ to be the
map induced by the $\G_x^x$-invariant map $\widetilde{\psi}\colon \widetilde{O_1}\to\widetilde{O_2}$ where $\widetilde{O_i}$
is the $\G$-saturation of the $\G_x^x$-orbit of $(g_i, y_i)$ in $s^{-1}(x)\times \alpha^{-1}(x)$ and $\widetilde{\psi}$
maps $(fg_1 h, h^{-1} y_1)$ to $(f\gamma g_2 h, h^{-1} y_2)$. Then as the map
$s^{-1}(x)\times \alpha^{-1}(x)\to s^{-1}(x)\times_{\G_x^x} \alpha^{-1}(x)$ is a quotient map by a group action and hence open, $\widetilde{\psi}$ is continuous,
implying that $\psi$ is a definable continuous map.

We first claim that $\psi$ is well-defined.
Suppose $[fg_1, y_1] = [kg_1, y_1]$ for $f, k\in\G_1$ with $s(f) = s(k) = t(g_1)$.
Then there is an $h\in\G_x^x$ such that $(fg_1 h, h^{-1} y_1) = (kg_1, y_1)$, implying that $h\in H$ and
\[
    k = fg_1 h g_1^{-1}.
\]
Applying $\psi$ and recalling that $\gamma = g_1 g_2^{-1}$ and $H$ fixes $y_2$,
\begin{align*}
    \psi\big([kg_1,y_1]\big)
        &=      [k\gamma g_2, y_2]
        \\&=    [k\gamma g_2 h^{-1}, h y_2]
        \\&=    \big[(fg_1 h g_1^{-1})(g_1 g_2^{-1}) g_2 h^{-1}, y_2\big]
        \\&=    [fg_1, y_2]
        \\&=    [f\gamma g_2, y_2]
        \\&=    \psi\big([fg_1,y_1]\big).
\end{align*}
Hence $\psi$ is well-defined. In addition, $\psi$ is invertible, because we can apply the same construction with $\gamma^{-1}$.

It remains to show that $\psi$ is $\G$-equivariant. Recall that the anchor map of the $\G$-action on
$s^{-1}(x)\times_{\G_x^x} \alpha^{-1}(x)$ is given by $\beta\big([g,y]\big) = t(g)$. Then we have
\begin{align*}
    \beta\circ\psi\big([fg_1, y_1]\big)
        &=  \beta\big([f\gamma g_2, y_2]\big)
        \\&=    t(f\gamma g_2)
        \\&=    t(f)
        \\&=    t(fg_1)
        \\&=    \beta\big([fg_1, y_1]\big)
\end{align*}
so that $\beta_{|O_2}\circ\psi = \beta_{|O_1}$. Finally, for any $k\in\G_1$ such that $s(k) = t(f)$, we have
\begin{align*}
    \psi\big(k\ast[fg_1, y_1]\big)
        &=      \psi\big([kfg_1, y_1]\big)
        \\&=    [kf\gamma g_2, y_2]
        \\&=    k\ast[f\gamma g_2, y_2]
        \\&=    k\ast\psi\big([fg_1, y_1]\big),
\end{align*}
and $\psi$ commutes with the action map. This completes the proof.
\end{proof}

%------------------------------------------------------------------------------
%------------------------------------------------------------------------------
%------------------------------------------------------------------------------

\section{Examples of Definable Groupoids}
\label{sec:Examples}

In this section, we give examples of definable groupoids to illustrate the range of their structures. Definable groupoids can be very singular
when compared, for instance, to Lie groupoids.

\begin{example}
\label{ex:GrpTranslation}
Let $G$ be a compact Lie group;
via a faithful representation, we assume $G$ is an algebraic subset of some $\R^{m^2}$ and use the corresponding definable structure on $G$.
Let $X$ be a definable subset of $\R^n$ and a $G$-space such that the action map $(g,x)\mapsto gx$ is definable. Considering $G$ as a groupoid
with a single object, the groupoid $G\ltimes X$ is a definable groupoid. Points in $X$ have the same weak orbit type if their isotropy groups
are isomorphic as topological groups, hence Lie groups, and the orbit type in the groupoid $G\ltimes X$ of $x\in X$ is the isomorphism class of the definable groupoid
$G\ltimes (G x)$, which is definably isomorphic to $G\ltimes (G/G_x^x)$. Note
in particular that the partitions into points with the same weak orbit types and orbit types in the groupoid $G\ltimes X$ are generally coarser than the partition
into points with the same orbit type in $X$ as defined in the context
of group actions; two points can have the same orbit type in the groupoid $G\ltimes X$ if their isotropy groups are not conjugate in $G$.

Similarly, two points $x,y\in X$ have the same orbit type
in the $G$-space $X$ if there is an automorphism of $G$ that intertwines a $G$-equivariant definable homeomorphism of the orbit $Gx$ onto $Gy$.
Note in particular that $x$ and $y$ have the same $G$-orbit type in $X$ if their isotropy groups are conjugate, but the converse is false in general;
hence, the partition into orbit types in the sense of
Definition~\ref{def:OrbitTypeG-Space} is coarser than the partition into orbit types in the usual sense for group actions.
\end{example}

\begin{example}
\label{ex:GrpTranslationSpecific}
As a specific example, let the torus $G = \T^2 = \{(g_1,g_2)\in\C^2 : \lvert g_1\rvert = \lvert g_2\rvert = 1\}$ act on
$X = \C^2$ via $(g_1, g_2)(z_1, z_2) = (g_1 z_1, g_2 z_2)$ for $(z_1,z_2)\in\C^2$; we use complex coordinates for notational convenience.
Then the point $(1,0)$ has orbit $\G(1,0) = \{(z_1,0) : \lvert z_1\rvert = 1\}$,
and $\G\ltimes\big(\G(1,0)\big)$ is definably isomorphic to $\Sp^1\ltimes\Sp^1$ where the action is by multiplication. Similarly,
$\G(0,1) = \{(0,z_2) : \lvert z_2\rvert = 1\}$, and $\G\ltimes\big(\G(0,1)\big)$ is definably isomorphic to $\G\ltimes\big(\G(1,0)\big)$
so that $(1,0)$ and $(0,1)$ have the same weak orbit type and orbit type in $\T^2\ltimes\C^2$
despite the fact that their isotropy groups are not conjugate in $G$. There are three orbit types in the groupoid $\T^2\ltimes\C^2$,
partitioning $\C^2$ into the origin, the set of points with one nonzero coordinate, and the set of points with both coordinates nonzero.
Considering $\T^2$ as a definable groupoid and $\C^2$ as a definable $\T^2$-space, the partition into points with the same orbit type coincides with
partition into points with the same orbit type in the groupoid $\T^2\ltimes\C^2$.
\end{example}

One class of examples of definable groupoids is given by the following.

\begin{definition}[Singular bundle of groups]
\label{def:SingBundGrps}
A \emph{singular bundle of groups} is a definable groupoid $\G$ given by the following construction.
Let $X$ be a definable subset of $\R^n$. We equip $X$ with a definable triangulation $C_1\sqcup\cdots\sqcup C_k$ in the sense of
\cite[Ch.~9]{vandenDriesBook} so that each $C_i$ is the interior of a simplex and the $C_i$ are pairwise disjoint; for simplicity, we identify
each simplex $C_i$ with its image in $X$ and consider $X = C_1\sqcup\cdots\sqcup C_k$. Let $G$ be a compact Lie group with definable structure as
in Example~\ref{ex:GrpTranslation}.
Let $H_0 \lneq H_1 \lneq\cdots\lneq H_d$ be a sequence of closed definable subgroups in $G$ where $d$ is the maximum of the dimensions of the $C_i$
and let $\G_1$ be the set of points
$(g,x)\in G\times X$ such that if $x$ is in the interior of a simplex of dimension $r$, then $g\in H_r$. Letting $\G_0 = X$, we define the structure
maps $s(g,x) = t(g,x) = x$, $u(x) = (e,x)$, $i(g,x) = (g^{-1},x)$, and $m\big((h,x)(g,x)\big) = (hg,x)$.
\end{definition}

Note that in a singular bundle of groups $\G$, the requirements of Definition~\ref{def:topological groupoid} are satisfied,
and the structure maps are clearly definable satisfying Definition~\ref{def:DefinGrpoid} so that $\G$ is a definable groupoid.
To see that $\G$ is source-open, suppose $x$ is contained in the open
simplex $C_i$ of dimension $r$ and $U$ is an $\epsilon$-ball about $(g,x)$ in $\G_1$. If $C_j$ is a simplex whose closure contains
$C_i$, then $H_i\lneq H_j$, implying that for $\epsilon$ small enough that $U$ does not intersect any simplices of dimension smaller than $\dim C_i$,
the set of points $(g,y)$ such that $|y - x| < \epsilon$ in $X$
is contained in $\G_1$ and hence in $U$. Therefore, $s(U)$ is the $\epsilon$-ball about $x$ in $X$, implying that $s$ is an open map.

A singular bundle of groups is a definable groupoid $\G$
consisting only of isotropy arrows, i.e., $s(g,x) = t(g,x)$ for each $(g,x)\in\G_1$, and can be thought of as a bundle
of compact Lie groups whose fiber is not constant.
The orbit space $\lvert\G\rvert$ of $\G$ is homeomorphic to $X$, and the isotropy group of each point $x\in\G_0$ contained in the interior of a simplex
of dimension $r$ is $H_r$. Hence, the triangulation serves to index the points in $X$ with each of the possible fibers;
two points $x,y\in X$ have the same weak orbit type if and only if they are in (interiors of) simplicies of the same dimension.
As each orbit is given by a point so that $\G\ltimes(\G x) = H_i\ltimes \{x\}$, the partition into points with the same weak orbit type
coincides with the partition into points with the same orbit type.

\begin{example}
\label{ex:SingBundGrpsR}
As an explicit example of a singular bundle of groups, let $X = \G_0 = \R$ with definable triangulation
\[
    (-\infty,0)\sqcup\{0\}\sqcup(0,\infty),
\]
let $G = \Sp^1$ be the circle, let $H_0 = \{e\} < G$, and let $H_1 = G$. Then the space of arrows $\G_1$ of the groupoid $\G$ is given by
\[
    \G_1 = \big(\Sp^1\times(\R\smallsetminus\{0\})\big)\sqcup\big(\{e\}\times\{0\}\big),
\]
the source and target maps are the projection
onto $\R$, and the unit, inverse, and multiplication maps are the group operations in the fibers.
There are two orbit types and weak orbit types in $\G$; each nonzero point has the same $\G$-orbit type, and the $\G$-orbit type of $0$ is distinct.

Now, consider the following $\G$-action on $Y = \Sp^1\times\R$. Let $\alpha\colon Y\to\G_0$ be the projection onto the $\R$-factor, and then
\[
    \G_1\ftimes{s}{\alpha}Y =   \{ \big((h,x),(g,x)\big) : h,g\in\Sp^1, x\in\R, (h,x)\in\G_1\}.
\]
Define an action map $\G_1\ftimes{s}{\alpha}Y\to Y$ by $\big((h,x),(g,x)\big) = (hg,x)$, and then this map along with the anchor
map $\alpha$ makes $Y$ into a definable $\G$-space. The orbits of $Y$ consist of the circles $\Sp^1\times\{x\}$ for $x\neq 0$ and the singletons
$\{(h,0)\}$ for $h\in\Sp^1$. Hence, the orbit space of $Y$ is $\R$ with a copy of $\Sp^1$ attached at the origin.
If $(g_1, x_1), (g_2, x_2)\in Y$ with $x_1, x_2\neq 0$, then
$\varphi\colon\G\ltimes\big(\G\alpha(g_1,x_1)\big) = \Sp^1\ltimes\{x_1\}\to \G\ltimes\big(\G\alpha(g_2,x_2)\big) = \Sp^1\ltimes\{x_2\}$
given by $\varphi_0(x_1) = x_2$ and $\varphi_1(h,x_1)=(h,x_2)$ is a groupoid isomorphism
that intertwines the $\big(\G\ltimes\big(\G\alpha(g_1,x_1)\big)\big)$-$\big(\G\ltimes\big(\G\alpha(g_2,x_2)\big)\big)$-equivariant
definable homeomorphism $\psi\colon\G(g_1, x_1)\to\G(g_2,x_2)$ given by
$\psi(g,x_1) = (g,x_2)$ for $g\in\Sp^1$. Hence, each point with nonzero $\R$-coordinate has the same orbit type in the $\G$-space $Y$.
Similarly, all points of the form $(g,0)\in Y$ have the same $\G$-space orbit type, as their orbits are singletons, and the map
sending one such point to another is trivially an equivariant definable homeomorphism intertwined by the groupoid isomorphism $\varphi_0^\prime(0) = 0$
and $\varphi_1^\prime(e)= e$.
\end{example}

As a variation on Example~\ref{ex:SingBundGrpsR} yielding a definable groupoid with arrows that are not isotropy arrows, we have the following.

\begin{example}
\label{ex:O(2)onR}
Let $\G_0 = \R$, again triangulated as $(-\infty,0)\sqcup\{0\}\sqcup(0,\infty)$,
and let $G = \OO_2(\R)$ be the $2\times 2$ orthogonal group over $\R$. Define the space of arrows of the groupoid $\G$ to be
\[
    \G_1 = \{ (g, x)\in G\times\R : g = e \text{ if } x = 0\}.
\]
Recall that $\OO_2(\R)$ consists of two connected components, $\SO_2(\R)$ consisting of elements with determinant $1$, and
$\OO_2(\R)\smallsetminus\SO_2(\R)$ consisting of elements with determinant $-1$. Define $s(g,x) = x$, $u(x) = (e,x)$, and
\[
    t(g,x)  =   \begin{cases}
                    x,  &   g\in\SO_2(\R),
                    \\
                    -x, &   g\notin\SO_2(\R).
                \end{cases}
\]
Similarly define
\[
    i(g,x)  =   \begin{cases}
                    (g^{-1},x),     &   g\in\SO_2(\R),
                    \\
                    (g^{-1},-x),    &   g\notin\SO_2(\R).
                \end{cases}
\]
Then the product of $(h,y)$ and $(g,x)$ is defined if and only if $g\in\SO_2(\R)$ and $y = x$ or $g\notin\SO_2(\R)$ and $y = -x$.
If $g\in\SO_2(\R)$, it is given by $m\big((h,x),(g,x)\big) = (hg,\det(h)x)$, and if $g\notin\SO_2(\R)$, then
$m\big((h,-x),(g,x)\big) = (hg,-\det(h)x)$. Then $\G$ is a definable groupoid.
Note that $\G\ltimes(\R\smallsetminus\{0\}) = \G_{|\R\smallsetminus\{0\}}$ is isomorphic to the translation groupoid
of $\OO_2(\R)$ acting on $\R\smallsetminus\{0\}$ where the action of $g\in\OO_2(\R)$ is given by $gx = \det(g)x$,
though $\G$ itself is not isomorphic to the translation groupoid of this action on $\R$. In particular, the orbit space $\lvert\G\rvert$
of $\G$ is homeomorphic to $[0,\infty)$,
where $0$ has trivial isotropy and each other point has $\SO_2(\R)\simeq\Sp^1$-isotropy.

Once again, the partition into points with the same weak orbit type in $\G$ coincides with the partition into points with the same orbit type.
Each nonzero point $x\in\R$ has orbit $\{\pm x\}$, and $\G\ltimes(\G x)$
is isomorphic to the translation groupoid of the $\OO_2(\R)$-action on $\{\pm x\}$ given by $hx = (\det h)x$.
The orbit of the origin is the singleton $\{0\}$ with trivial isotropy, hence $0$ is the only point with the second isotropy type.

As a variation of this example with singular object space, one may replace $\G_0$ with the union of the origin and positive $x$-
and $y$-axes in $\R^2$, identifying the positive $y$-axis with $(-\infty,0)$ to define the action. Similarly, one may
replace $\G_0$ with the quadratic cone $x^2 + y^2 = z^2$ in $\R^3$ and define the action of $g\notin\SO_2(\R)$ as reflection
through the $xy$-plane in $\R^3$, yielding a groupoid whose orbit space is a single node of the cone with trivial isotropy
at the origin and $\SO_2(\R)\simeq\Sp^1$-isotropy elsewhere.
\end{example}

Note that the cases considered in Examples~\ref{ex:O(2)onR} are unlike translation groupoids by the action of a compact Lie group
on a manifold; in each example, the isotropy group is generically larger than at singularities.

%------------------------------------------------------------------------------
%------------------------------------------------------------------------------
%------------------------------------------------------------------------------

\section{The universal Euler characteristic of orbit space definable groupoids}
\label{sec:UniversalEC}

We now examine Euler characteristics of orbit space definable groupoids.  We begin by considering additive and multiplicative topological invariants on general affine definable spaces and on orbit space definable groupoids.  In an extension of \cite[Sec.~5]{GZLMH-Universal}, we introduce the universal Euler characteristic $\chi^{\un}$ of orbit space definable groupoids and show that, among orbit space definable groupoids with sufficiently controlled local topology, any additive and multiplicative topological invariant of orbit space definable groupoids can be realized as the composition of $\chi^{\un}$ with a homomorphism of rings.  We conclude this section with examples, including the $\Gamma$-Euler characteristic for orbit space definable groupoids introduced in \cite{FarSeaJLMS} as well as the Euler-Satake analogue of counting connected components \cite{SatakeGB}.

We begin with the following definitions for general affine definable spaces.

\begin{definition}[Additive and multiplicative topological invariant]
\label{def:AffineTopInvar}
Let $R$ be a set.
\begin{enumerate}
\item A \textit{topological invariant of affine definable spaces} $I$ assigns to each affine definable space $X$ an element $I(X)$ of $R$ such that if $X$ and $Y$ are homeomorphic affine definable spaces, then $I(X) = I(Y)$.
\item If $R$ has the structure of an additive group, we say that a topological invariant of affine definable spaces is \textit{additive} if for every affine definable space $X$ and definable subset $Y$, we have
\begin{equation}
\label{eq:AdditiveTop}
    I(X)    =   I(Y) + I(X\smallsetminus Y).
\end{equation}
\item If $R$ has the additional multiplicative structure of a ring, we say that a topological invariant of affine definable spaces is \textit{multiplicative} if for every pair of affine definable spaces $X$ and $Y$, we have
\begin{equation}
\label{eq:Multiplicative}
I(X\times Y) = I(X)\cdot I(Y).
\end{equation}
\end{enumerate}
\end{definition}

We extend Definition~\ref{def:AffineTopInvar} in a natural way to orbit space definable groupoids, whose orbit spaces are affine definable spaces.

\begin{definition}[Additive and multiplicative topological invariant of orbit space definable groupoids]
\label{def:OrbSpDefGpdInvar}
Let $R$ be a set.
\begin{enumerate}
\item   A \textit{topological invariant of orbit space definable groupoids} $I$ assigns to each orbit space definable groupoid $\G$ an element $I(\G)$ of $R$ such that if $\G$ and $\HH$ are orbit space definable groupoids that are topologically Morita equivalent, then $I(\G) = I(\HH)$.
\item   If $R$ has the structure of an additive group, we say that a topological invariant of orbit space definable groupoids is \textit{additive} if for every orbit space definable groupoid $\G$ for which the orbit space $\vert\G\vert$ can be decomposed as $\vert\G_1\vert\sqcup \vert\G_2\vert$ where $\G_1$ and $\G_2$ are orbit space definable subgroupoids of $\G$, we have
\begin{equation}
\label{eq:AdditiveGpd}
    I(\G)    =   I(\G_1) + I(\G_2).
\end{equation}
\item   If $R$ has the additional multiplicative structure of a ring, we say that a topological invariant of orbit space definable groupoids is \textit{multiplicative} if for every pair of orbit space definable groupoids $\G$ and $\HH$, we have
\begin{equation}
\label{def:Multiplicative}
I(\G\times \HH) = I(\G)\cdot I(\HH).
\end{equation}
\end{enumerate}
\end{definition}

\begin{remark}
\label{rem:OrbSpDefGpdInvar}
\mbox{}
\begin{enumerate}
\item   If $\G$ and $\HH$ are topologically Morita equivalent, then the orbit spaces $\vert\G\vert$ and $\vert\HH\vert$
        are homeomorphic. See \cite[Prop.~60]{Metzler} or \cite[Thrm.~4.3.1]{delHoyoOrbispace} for the case of Lie groupoids.
\item   If $\G$ is an orbit space definable groupoid and $X\subseteq\vert\G\vert$ is a definable subset, then $\G_{\pi^{-1}(X)}$ admits the structure
        of an orbit space definable groupoid by restricting the embedding of $\vert\G\vert$ to $X$. See \cite[Lem.~3.2(ii)]{FarsiSeaGenOrbEuler}.
\item   Similarly, if $\G$ and $\HH$ are orbit space definable groupoids, then $\G\times\HH$ admits the structure of an orbit space
        definable groupoid via the product embedding. See \cite[Lem.~3.2(i)]{FarsiSeaGenOrbEuler}.
\end{enumerate}
\end{remark}

We observe that the Euler characteristic $\chi$ of Equation~\eqref{eq:EulerChar} is a well-defined additive and multiplicative topological invariant of affine definable spaces that takes values in the ring $\mathbb{Z}$.  Now, suppose that $I$ is any additive topological invariant of affine definable spaces.  In a generalization of \cite[Prop.~1]{GZLMH-Universal}, we make the following observations about the relationship between an additive topological invariant of affine definable spaces and $\chi$.
See Section~\ref{subsec:BackOrbSpaceDefinGpoid} for a review of the notation of definable cells and \cite[Ch.~3 Sec.~2]{vandenDriesBook} for more details.

\begin{lemma}
\label{lem:ECAddUnique}
Let $I$ be an additive topological invariant of affine definable spaces. If $C$ is an
$(i_1,\ldots,i_m)$-cell, then $I(C) = (-1)^{\dim C}I(\{p\})$ where $p$ is a point.
\end{lemma}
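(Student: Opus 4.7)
The plan is to reduce to the case of the standard open cube $(0,1)^n$ and then prove the identity $I((0,1)^n) = (-1)^n I(\{p\})$ by induction on $n = \dim C$. The result from \cite[Ch.~3~(2.7)]{vandenDriesBook} cited just before the lemma gives a definable homeomorphism, via coordinate projection, from any $(i_1, \ldots, i_m)$-cell onto an open cell of the same dimension. Working through the inductive definition of a $(1, \ldots, 1)$-cell with $n$ ones and iteratively rescaling each fiber via the definable map $(p, y) \mapsto \bigl(p, (y - f(p))/(g(p) - f(p))\bigr)$ (with analogous semialgebraic rescalings if $f \equiv -\infty$ or $g \equiv +\infty$) then produces a definable homeomorphism from that open cell onto $(0, 1)^n$. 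Since each space in sight is an affine definable subset of some $\R^k$ and $I$ is a topological invariant, this reduces the claim to establishing the formula for $(0,1)^n$.

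For the induction, $n = 0$ is immediate because $(0,1)^0$ is a point. For the inductive step, I would split off the last coordinate of $\R^n$ to obtain the definable partition
\[
\R^n = \bigl(\R^{n-1} \times (-\infty, 0)\bigr) \sqcup \bigl(\R^{n-1} \times \{0\}\bigr) \sqcup \bigl(\R^{n-1} \times (0, \infty)\bigr),
\]
and then apply the additivity relation \eqref{eq:AdditiveTop} twice to get
\[
I(\R^n) = I\bigl(\R^{n-1} \times (-\infty, 0)\bigr) + I\bigl(\R^{n-1} \times \{0\}\bigr) + I\bigl(\R^{n-1} \times (0, \infty)\bigr).
\]
The semialgebraic map $x \mapsto x/(1 + |x|)$ is a homeomorphism from $\R$ onto $(-1, 1)$ that restricts to homeomorphisms from $(-\infty, 0)$, $\{0\}$, and $(0, \infty)$ onto $(-1, 0)$, $\{0\}$, and $(0, 1)$ respectively. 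Taking products, each of $\R^n$, $\R^{n-1} \times (-\infty, 0)$, and $\R^{n-1} \times (0, \infty)$ is homeomorphic as an affine definable space to $(0,1)^n$, while $\R^{n-1} \times \{0\}$ is homeomorphic to $(0,1)^{n-1}$. Topological invariance of $I$ therefore converts the previous display into
\[
I\bigl((0,1)^n\bigr) = 2\, I\bigl((0,1)^n\bigr) + I\bigl((0,1)^{n-1}\bigr),
\]
which rearranges to $I((0,1)^n) = -I((0,1)^{n-1}) = (-1)^n I(\{p\})$ by the inductive hypothesis.

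I do not foresee any substantive obstacle; the argument is essentially a packaged version of the computation that turns the identity $\R = (-\infty,0) \sqcup \{0\} \sqcup (0,\infty)$, together with $\R \cong (0,1)$, into the formula $\chi((0,1)) = -1$ in the classical setting. The minor items to verify are that each set appearing in the decomposition really is a definable subset of some $\R^k$ (immediate, since each is a product of definable subsets of $\R$) and that the bounding-function case $f \equiv -\infty$ or $g \equiv +\infty$ in the reduction step is handled by an analogous semialgebraic rescaling of a half-line fiber, e.g.\ $y \mapsto y/(1 + |y|)$.
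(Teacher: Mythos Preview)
Your proof is correct and follows essentially the same strategy as the paper's: both reduce to open cells via \cite[Ch.~3~(2.7)]{vandenDriesBook}, split an $n$-dimensional open cell into two definably homeomorphic copies of itself together with an $(n-1)$-dimensional slice, and solve the resulting equation $I(C) = 2I(C) + (-1)^{n-1}I(\{p\})$. The only difference is cosmetic---the paper splits a generic open cell directly by inserting a definable continuous function $h$ between its bounding functions $f$ and $g$, whereas you first normalize to the cube $(0,1)^n$ (equivalently $\R^n$) and then split along a coordinate hyperplane; your extra normalization step is correct but unnecessary.
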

\begin{proof}
By \cite[Ch.~3 (2.7)]{vandenDriesBook} and topological invariance, it is sufficient to show the result for
$(\overset{m}{\overbrace{1,\ldots,1}})$-cells $C$ in $\R^m$. The proof is by induction on $m$. First assume $m = 0$ so that $C$ is a $(\,)$-cell. Then
$C$ is a point so that $I(C) = I(\{p\})$ by topological invariance. If $m = 1$ so that $C$ is a $(1)$-cell,
then $C$ is an interval $(a,b)$. Choosing $p$ such that $a < p < b$, we can express $C= (a,p) \sqcup \{p\}\sqcup (p,b)$ and then by additivity and
topological invariance,
\begin{align*}
    I\big((a,b)\big)
        &=      I\big((a,p)\big) + I(\{p\}) + I\big((p,b)\big)
        \\&=    2I\big((a,b)\big) + I(\{p\}).
\end{align*}
It follows that $I(C) = - I(\{p\})$.

Now assume the result holds for $(\overset{m}{\overbrace{1,\ldots,1}})$-cells. If $C$ is an
$(\overset{m}{\overbrace{1,\ldots,1}},1)$-cell in $\R^{m+1}$, then there is a $(\overset{m}{\overbrace{1,\ldots,1}})$-cell $C^{\prime}$
and definable continuous (possibly infinite) functions $f,g\colon C^{\prime}\to\R$ such that $f < g$ and $C = \{ (p,x) : p\in C^{\prime}, f(p)<x<g(p)\}$.
Choose a definable continuous function $h\colon C^{\prime}\to\R$ such that $f < h < g$. We could take $h = (f+g)/2$ if both $f$ and $g$ are finite;
we take $h = 0$ if both are infinite; $h = g-1$ if only $f$ is infinite; and $h = f+1$ if only $g$ is infinite. Then $C$ is the disjoint
union of the $(\overset{m}{\overbrace{1,\ldots,1}},1)$-cells $C_1$ defined by $f$ and $h$ and $C_2$ defined by $h$ and $g$, and the
$(\overset{m}{\overbrace{1,\ldots,1}},0)$-cell $C_3$ given by the graph of $h$. Note that $C_3$ is definably homeomorphic to an
$(\overset{m}{\overbrace{1,\ldots,1}})$-cell so that $I(C_3) = (-1)^m I(\{p\})$ by the inductive hypothesis. Note further
that $C_1$ and $C_2$ are obviously definably homeomorphic to $C$. Hence we have
\begin{align*}
    I(C)
        &=      I(C_1) + I(C_2) + I(C_3)
        \\&=    2I(C) + (-1)^m I(\{p\}),
\end{align*}
implying that $I(C) = (-1)^{m+1}I(\{p\})$ and completing the proof.
\end{proof}

\begin{theorem}[Every additive topological invariant is a multiple of $\chi$]
\label{thrm:ECAddUniqueTop}
Let $I$ be a topological invariant of affine definable spaces. Then $I$ is additive if and only if $I(X) = \chi(X)I(\{p\})$
for each affine definable space $(X,\iota)$, where $p$ is a point.
\end{theorem}
\begin{proof}
 Assume $I$ is additive and let $(X,\iota)$ be an affine definable space. Since $X$ is definable, we may identify $X$ with $\iota(X)\subseteq\R^n$.
 Then $X$ can be written as a finite disjoint union of cells $C_1,\ldots,C_k$.  By additivity of $I$ and Lemma~\ref{lem:ECAddUnique},
\begin{align*}
    I(X)
        &=      \sum\limits_{i=1}^k I(C_i)
        \\&=    \sum\limits_{i=1}^k (-1)^{\dim C_i}I(\{p\})
        \\&=    \chi(X)I(\{p\}).
\end{align*}
Conversely, the additivity of $\chi$ clearly implies that $\chi(X)I(\{p\})$ is additive, completing the proof.
\end{proof}

In order to extend Lemma~\ref{lem:ECAddUnique}  and Theorem~\ref{thrm:ECAddUniqueTop} to orbit space definable groupoids, we need to introduce
a hypothesis to control the topology of the sets $\G_0$ and $\G_1$.
Let $\G$ be an orbit space definable groupoid. We say that $\G$ is \emph{isotropy locally trivial} if there is a finite cell decomposition of
$\vert\G\vert$ subordinate to the partition into points with the same weak orbit type such that for each cell $C$ contained in
$\vert\G\vert_{[H]}$, where $H$ is an isotropy group of $\G$, we have $\G_{|\pi^{-1}(C)} = \G\ltimes\pi^{-1}(C)$ is topologically Morita equivalent to $H\ltimes C$
with trivial $H$-action.
We as will call a cell decomposition with this property \emph{isotropy locally trivial}.

Let us indicate that isotropy local triviality is fairly common among orbit space definable groupoids.

\begin{proposition}
Let $\G$ be a cocompact proper Lie groupoid. Then $\G$ is isotropy locally trivial.
\end{proposition}
\begin{proof}
By \cite[Cor.~3.11]{PPTOrbitSpace}, there is for each point $x\in\G_0$ an open neighborhood $U_x \subseteq\G_0$ such that
$\G_{|U_x} = \G\ltimes U_x$ is isomorphic
to $(\G_x^x\ltimes V)\times(O\times O)$ where $V$ is a $\G_x^x$-invariant open neighborhood of the origin in a linear representation of $\G_x^x$,
$O$ is a neighborhood of $x$ in its orbit, and $O\times O$ denotes the pair groupoid. As $\vert\G\vert$ is compact, there is
a finite collection $x_1,\ldots,x_m$ such that the $\pi(U_{x_i})$ cover $\vert\G\vert$. By \cite[Cor.~7.2]{PPTOrbitSpace},
$\vert\G\vert$ admits a triangulation subordinate to the partition of $\lvert\G\rvert$ into orbits of points with the same weak orbit type,
and we may by barycentric subdivision ensure
that each simplex is contained in some $\pi(U_{x_i})$. Using this triangulation to define an embedding $\iota_{\vert\G\vert}\colon\vert\G\vert\to\R^n$
as in the proof of \cite[Prop.~3.4]{FarsiSeaGenOrbEuler} gives $\G$ the structure of an orbit space definable groupoid.
Fix an open simplex $C$, let $H$ be the isotropy group of a point in $\pi^{-1}(C)\cap U_{x_i}$, and note that
$C$ is contained in the set $\pi(\vert\G\vert_{[H]})$ of orbits with weak orbit type $[H]$.
The set $(U_{x_i})_{H}$ of points in $U_{x_i}$ with $\G_{x_i}^{x_i}$-isotropy group equal to $H$ is
by \cite[Thrm.~4.3.10]{PflaumBook} an opposite principal bundle over $\pi(U_{x_i})$. Restricting this bundle to $C$,
$\pi^{-1}(C)\cap(U_{x_i})_{H} \to C$ is an opposite principal bundle over $C$ so that as $C$ is contractible, the bundle is trivial.
Then, letting $N$ denote the normalizer of $H$ in $\G_{x_i}^{x_i}$, $\pi^{-1}(C)\cap(U_{x_i})_{H}$ is $N$-equivariantly diffeomorphic to $N/H\times C$
with the restriction of $\pi$ to $\pi^{-1}(C)\cap(U_{x_i})_{H}$ corresponding to the projection to $C$. Let $D$ be the submanifold of $\pi^{-1}(C)\cap(U_{x_i})_{H}$
corresponding to $\{eH\}\times C$ under this diffeomorphism, and then $\pi$ restricts to a diffeomorphism $D\to C$. Moreover
$\G_{|\pi^{-1}(C)} = \G\ltimes\pi^{-1}(C)$ and $\G_{|D} = \G\ltimes D$ are Morita equivalent as Lie groupoids by
\cite[Prop.~3.7]{PPTOrbitSpace}. By construction, $gD \subseteq D$ for $g\in\G_{x_i}^{x_i}$ if and only if $g\in H$ so that $\G_{|D}$ is Lie groupoid
isomorphic to $H\ltimes C$ with trivial $H$-action, completing the proof.
\end{proof}

It follows that a (not necessarily cocompact) orbit space definable subgroupoid $\HH$ of an isotropy locally trivial
orbit space definable groupoid is as well isotropy locally trivial.

The proofs of Lemma~\ref{lem:ECAddUnique}  and Theorem~\ref{thrm:ECAddUniqueTop}
extend to characterize additive invariants of orbit space definable groupoids when restricted to isotropy locally trivial groupoids.

\begin{lemma}
\label{lem:ECGpdAddUnique}
Let $I$ be an additive topological invariant of orbit space definable groupoids and let $\G$ be an isotropy local trivial orbit space definable groupoid.
Suppose that $C$ is an $(i_1,\ldots,i_m)$-cell in an isotropy locally trivial cell decomposition of the orbit space $\vert\G\vert$ so that
$\G_{|\pi^{-1}(C)} = \G\ltimes\pi^{-1}(C)$
and $H\ltimes C$ are topologically Morita equivalent.
Then $I(\G_{\vert\pi^{-1}(C)}) = (-1)^{\dim C}I(H\ltimes\{p\})$ where $\pi:\G_0\to\vert\G\vert$ is the orbit map and $H\ltimes \{p\}$ is the point groupoid
with $\G_0=\{p\}$ is a point and $\G_1=H$.
\end{lemma}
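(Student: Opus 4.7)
The plan is to mimic the argument of Lemma~\ref{lem:ECAddUnique}, replacing affine definable spaces $X$ by the trivial-action translation groupoids $H \ltimes X$ and passing between orbit-space-equivalent groupoids via Morita invariance whenever needed.

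First, since $I$ is Morita invariant and $\G_{|\pi^{-1}(C)}$ is topologically Morita equivalent to $H\ltimes C$ by hypothesis, $I(\G_{|\pi^{-1}(C)}) = I(H\ltimes C)$. By \cite[Ch.~3 (2.7)]{vandenDriesBook}, coordinate projection is a definable homeomorphism from an $(i_1,\ldots,i_m)$-cell to an open cell of the same dimension; because the $H$-action is trivial, this lifts to an isomorphism of the associated translation groupoids, hence to a Morita equivalence. It therefore suffices to prove $I(H\ltimes C) = (-1)^{\dim C}\,I(H\ltimes\{p\})$ when $C$ is an open cell, which we do by induction on $m = \dim C$.

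The base case $m = 0$ is immediate, and the cases $m = 1$ and $m+1$ parallel the corresponding cases of Lemma~\ref{lem:ECAddUnique} verbatim, with $X$ replaced by $H \ltimes X$. For $m = 1$, split $C = (a,b)$ as $(a,p) \sqcup \{p\} \sqcup (p,b)$; for the inductive step on an $(1,\ldots,1,1)$-cell $C$, split $C = C_1 \sqcup C_3 \sqcup C_2$ using a definable middle function $h$ with $f < h < g$, chosen as in Lemma~\ref{lem:ECAddUnique}. In each case, the object-space partition lifts, under the trivial $H$-action, to a decomposition of $|H\ltimes C|$ into orbit spaces of the subgroupoids $H\ltimes(\cdot)$ on each piece. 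Iterated application of Equation~\eqref{eq:AdditiveGpd}, combined with the groupoid isomorphisms $H \ltimes (\text{open sub-cell}) \to H \ltimes C$ induced by the definable homeomorphisms of open sub-cells onto $C$, then produces the same arithmetic as in the proof of Lemma~\ref{lem:ECAddUnique} (with $I(H \ltimes \{p\})$ in the role of $I(\{p\})$), yielding $I(H\ltimes C) = -I(H\ltimes\{p\})$ for $m=1$ and $I(H\ltimes C) = (-1)^{m+1}I(H\ltimes\{p\})$ in the inductive step.

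The main (minor) obstacle is verifying that the trivial-action subgroupoids $H\ltimes S$ on definable subsets $S\subseteq C$ are themselves orbit space definable subgroupoids in the sense required by Definition~\ref{def:OrbSpDefGpdInvar}(2), so that the additivity axiom applies. This is straightforward: under the trivial action, $|H\ltimes S| = S$ inherits its definable embedding from that of $C$, each isotropy group is still $H$, and any definable homeomorphism $f\co S\to S'$ lifts to the groupoid isomorphism $(h,x)\mapsto(h,f(x))$ used above. Additivity is applied iteratively to two-part partitions to handle the three-part splittings arising in the induction.
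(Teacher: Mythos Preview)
Your proof is correct and follows essentially the same approach as the paper: both reduce to the induction of Lemma~\ref{lem:ECAddUnique} with $H\ltimes\{p\}$ in place of $\{p\}$, using Morita invariance to pass between $\G_{|\pi^{-1}(C)}$ and the trivial-action translation groupoid $H\ltimes C$. The only cosmetic difference is that you pass to $H\ltimes C$ once at the outset and run the induction entirely there, whereas the paper retains the restricted-groupoid notation $\G_{|\pi^{-1}(C_i)}$ and invokes the Morita equivalence at each step.
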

\begin{proof}
First observe that when $C$ is a point in the orbit space $\vert\G\vert$, $\G_{\vert\pi^{-1}(C)}$ is transitive and hence topologically Morita equivalent
to $G\ltimes \{p\}$;
see \cite[Sec.~3.2]{WilliamsHaar}. Furthermore, decomposing $C$ into cells $C_i$, $i=1,2$, as in the proof of Theorem~\ref{thrm:ECAddUniqueTop}, $\G_{\vert\pi^{-1}(C_i)}$ is topologically Morita equivalent to $H\ltimes C_i$ by hypothesis.
As $C_i$, $i=1,2$, are both homeomorphic to $C$, it follows that
$\G_{\vert\pi^{-1}(C_i)}$ and $H\ltimes C$ are topologically Morita equivalent. With these observations, the proof of the lemma follows the
same reasoning as the proof of Theorem~\ref{thrm:ECAddUniqueTop}.
\end{proof}

\begin{proposition}
\label{prop:ECGpdAddUnique}
Let $I$ be a topological invariant of orbit space definable groupoids. Then $I$ is additive on isotropy locally trivial groupoids if and only if
$I(\G) = \chi(\vert \G\vert)I(G\ltimes\{p\})$ for each isotropy locally trivial orbit space definable groupoid $\G$ such that every isotropy group
is isomorphic to the compact Lie group $G$, where $p$ is a point.
\end{proposition}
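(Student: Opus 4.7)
The plan is to extend the proof of Proposition~\ref{prop:ECAddUnique} to the groupoid setting, using Lemma~\ref{lem:ECGpdAddUnique} in place of Lemma~\ref{lem:ECAddUnique} and an isotropy locally trivial cell decomposition of $\vert\G\vert$ subordinate to the partition by weak orbit types. The right-hand side should be read as the sum $\sum_{[H]}\chi(\vert\G\vert_{[H]})\,I(H\ltimes\{p\})$ over the (finitely many) isotropy types of $\G$; this reduces to the displayed form when only one weak orbit type appears.

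For the forward direction, suppose $I$ is additive on isotropy locally trivial groupoids and let $\G$ be such a groupoid. I would fix an isotropy locally trivial cell decomposition $\vert\G\vert=\bigsqcup_{i=1}^k C_i$, so that each $C_i$ lies in a unique $\vert\G\vert_{[H_i]}$ and $\G_{|\pi^{-1}(C_i)}$ is topologically Morita equivalent to $H_i\ltimes C_i$. Iterated application of additivity yields $I(\G)=\sum_{i=1}^k I(\G_{|\pi^{-1}(C_i)})$; Lemma~\ref{lem:ECGpdAddUnique} replaces each summand by $(-1)^{\dim C_i}I(H_i\ltimes\{p\})$; and regrouping by isotropy type together with the cell-sum formula \eqref{eq:EulerChar} produces $\sum_{[H]}\chi(\vert\G\vert_{[H]})\,I(H\ltimes\{p\})$.

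For the converse, I would verify directly that the expression $J(\G):=\sum_{[H]}\chi(\vert\G\vert_{[H]})\,I(H\ltimes\{p\})$ defines an additive topological invariant on isotropy locally trivial orbit space definable groupoids. If $\vert\G\vert=\vert\G_1\vert\sqcup\vert\G_2\vert$ for orbit space definable subgroupoids $\G_1,\G_2$, each weak orbit type $\vert\G\vert_{[H]}$ decomposes as $\vert\G_1\vert_{[H]}\sqcup\vert\G_2\vert_{[H]}$, and additivity of $\chi$ on affine definable spaces then gives $J(\G)=J(\G_1)+J(\G_2)$. Topological invariance is automatic because both the orbit space strata $\vert\G\vert_{[H]}$ and the isomorphism classes of the isotropy groups $H$ are preserved under topological Morita equivalence.

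The main obstacle is verifying that the cell-by-cell computation is intrinsic, i.e.\ independent of the choice of isotropy locally trivial cell decomposition; this amounts to recognizing that the contribution of each weak orbit type depends only on $\chi(\vert\G\vert_{[H]})$, a homeomorphism invariant by \cite[Thrm~2.2]{BekeInvarEC}, so that $J(\G)$ is well defined. A secondary technical point is that the subgroupoids appearing in the additivity axiom must themselves be isotropy locally trivial so that Lemma~\ref{lem:ECGpdAddUnique} may be applied to each stage; this is guaranteed by the observation, noted immediately after that lemma, that orbit space definable subgroupoids of isotropy locally trivial groupoids inherit the property.
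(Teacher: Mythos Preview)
Your proposal is correct and follows essentially the same approach as the paper: take an isotropy locally trivial cell decomposition of $\vert\G\vert$, apply additivity and Lemma~\ref{lem:ECGpdAddUnique} cell by cell, and regroup to obtain $\sum_{[H]}\chi(\vert\G\vert_{[H]})\,I(H\ltimes\{p\})$, with the converse following from additivity of $\chi$. Your observation that the statement should be read as a sum over isotropy types is exactly right and is how the paper uses the result in the proof of Theorem~\ref{thrm:ECAddUnique}; the only minor slip is that the remark on inheritance of isotropy local triviality by subgroupoids appears after the lemma on cocompact proper Lie groupoids, not after Lemma~\ref{lem:ECGpdAddUnique}.
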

\begin{proof}
 Assume $I$ is additive and let $\G$ be an isotropy locally trivial orbit space definable groupoid.
 Then using an isotropy locally trivial cell decomposition of $\vert\G\vert$,
 and Lemma~\ref{lem:ECGpdAddUnique}, the proof follows exactly as the proof of Theorem~\ref{thrm:ECAddUniqueTop}.
 Conversely, the additivity of $\chi$ ensures that $\chi(\vert \G\vert)I(G\ltimes\{p\})$ is an additive invariant of
 isotropy locally trivial orbit space definable groupoids.
 \end{proof}

We now turn to defining the universal Euler characteristic $\chi^{\un}$ for orbit space definable groupoids, a generalization of
\cite[Sec.~5]{GZLMH-Universal}.  We begin by defining the ring $\mathscr{R}$ which will serve as the codomain of $\chi^{\un}$.

Let $\mathscr{G}$ be the set of isomorphism classes of compact Lie groups.  Let $T^{[G]}$ denote a formal variable corresponding to the isomorphism class $[G]\in\mathscr{G}$ of the compact group $G$.  Let $\mathscr{P}$ denote the ring of polynomials with integer coefficients over the set $\{T^{[G]}\, \vert\, [G]\in\mathscr{G}\}$ where we identify $T^{[e]}$ with the constant polynomial $1$. Let $\mathscr{R}=\mathscr{P}/\mathscr{I}$ where $\mathscr{I}$ is the ideal generated by the following polynomials,
\begin{equation}
\mathscr{I} = \langle T^{[G]}T^{[H]}-T^{[G\times H]}\rangle.
\end{equation}
Thus, for example, $T^{[e]}+\mathscr{I}$ is the unity of $\mathscr{R}$ and, after simplifications of the form $T^{[G]}T^{[H]}=T^{[G\times H]}$, elements of $\mathscr{R}$ can be expressed as sums $\big(\sum_{[G]\in\mathscr{G}}a_{[G]}T^{[G]}\big)+\mathscr{I}$ where $a_{[G]}\in\mathbb{Z}$ and
$a_{[G]} = 0$ for all but finitely many $[G]$. In a slight abuse of notation, in the following we refer to a coset of $\mathscr{I}$ in $\mathscr{R}$
by a representative element $\sum_{[G]\in\mathscr{G} }a_{[G]}T^{[G]}$.

As in the case of orbifolds in \cite{GZLMH-Universal}, we can also represent an element $\mathscr{R}$ as a polynomial in the $T^{[G]}$ corresponding
to \emph{indecomposable} $G$, i.e., $G$ such that if $G \simeq H_1\times H_2$ as compact Lie groups, then one of the $H_i$ is trivial. However,
let us observe that this expression is not in general unique, unlike the case of finite groups treated in \cite{GZLMH-Universal}. Indeed, there is no
Krull-Schmidt theorem for compact Lie groups, as was demonstrated to us by Jason DeVito. See \cite{DeVito} for a counterexample as well as a proof of
a Krull-Schmidt theorem for compact connected Lie groups whose center has dimension at most $1$.

We are now prepared to define the universal Euler characteristic of an orbit space definable groupoid $\G$. Recall that
$\vert\G\vert_{[G]}$ denotes the set of orbits in $\vert\G\vert$ of points with isotropy group isomorphic to $G$.

\begin{definition}[Universal Euler characteristic]
\label{def:UnivEC}
Let $\G$ be an orbit space definable groupoid with orbit space $\lvert\G\rvert$.  The \textit{universal Euler characteristic of $\G$} is defined by
\begin{equation}\label{eqn:univEC}
\chi^{\un}(\G)=\sum_{G\in \mathscr{G}} \chi(\vert\G\vert_{[G]})T^{[G]}\in\mathscr{R}.
\end{equation}
Observe that by the definition of an orbit space definable groupoid, the sum in Equation~\eqref{eqn:univEC} is finite.
\end{definition}

We have the following.

\begin{lemma}
\label{lem:UnivECAddMult}
The universal Euler characteristic is an additive and multiplicative topological invariant of orbit space definable groupoids.
\end{lemma}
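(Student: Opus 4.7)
The plan is to verify each of the three required properties—topological invariance, additivity, and multiplicativity—by reducing to the corresponding property of the ordinary Euler characteristic $\chi$ applied to the definable sets $\vert\G\vert_{[G]}$.

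For topological invariance, I would first observe that if $\G$ and $\mathcal{H}$ are topologically Morita equivalent via a span $\G \leftarrow \mathcal{K} \rightarrow \mathcal{H}$ of essential equivalences, then the orbit spaces $\vert\G\vert$ and $\vert\mathcal{H}\vert$ are homeomorphic, as recalled in Remark~\ref{rem:OrbSpDefGpdInvar}(1). Moreover, essential equivalences induce isomorphisms of isotropy groups at corresponding points (this is immediate from the fibered-product condition in Equation~\eqref{eq:WeakEquivFullyFaithful}), so the induced homeomorphism $\vert\G\vert \to \vert\mathcal{H}\vert$ restricts to a homeomorphism $\vert\G\vert_{[G]} \to \vert\mathcal{H}\vert_{[G]}$ for each $[G]\in\mathscr{G}$. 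Since $\chi$ is a homeomorphism invariant of definable sets, each term in Equation~\eqref{eqn:univEC} agrees for $\G$ and $\mathcal{H}$, giving $\chi^{\text{un}}(\G) = \chi^{\text{un}}(\mathcal{H})$.

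For additivity, suppose $\vert\G\vert = \vert\G_1\vert \sqcup \vert\G_2\vert$ with $\G_1,\G_2$ orbit space definable subgroupoids. Each weak orbit type then splits definably as $\vert\G\vert_{[G]} = \vert\G_1\vert_{[G]} \sqcup \vert\G_2\vert_{[G]}$, and the additivity of $\chi$ on disjoint unions of definable sets (a consequence of Equation~\eqref{eq:EulerChar}) yields $\chi(\vert\G\vert_{[G]}) = \chi(\vert\G_1\vert_{[G]}) + \chi(\vert\G_2\vert_{[G]})$. Summing over $[G]\in\mathscr{G}$ and collecting coefficients of each $T^{[G]}$ gives $\chi^{\text{un}}(\G) = \chi^{\text{un}}(\G_1) + \chi^{\text{un}}(\G_2)$.

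For multiplicativity, I would use the fact that the isotropy group of $\G\times\mathcal{H}$ at $(x,y)$ is $\G_x^x \times \mathcal{H}_y^y$, so that for each $[K]\in\mathscr{G}$,
\begin{equation*}
    \vert\G\times\mathcal{H}\vert_{[K]} = \bigsqcup_{[K] = [G\times H]} \vert\G\vert_{[G]} \times \vert\mathcal{H}\vert_{[H]},
\end{equation*}
a finite disjoint union of definable sets. Multiplicativity of $\chi$ on products of affine definable spaces together with the defining relation $T^{[G]}T^{[H]} = T^{[G\times H]}$ in $\mathscr{R}$ then gives
\begin{equation*}
    \chi^{\text{un}}(\G\times\mathcal{H})
        = \sum_{[G],[H]} \chi(\vert\G\vert_{[G]})\,\chi(\vert\mathcal{H}\vert_{[H]})\, T^{[G\times H]}
        = \chi^{\text{un}}(\G) \cdot \chi^{\text{un}}(\mathcal{H}).
\end{equation*}
The only mild subtlety is bookkeeping: one must check that finitely many $[G],[H]$ appear (guaranteed by the finite partition condition in Definition~\ref{def:OrbitSpaceDefinable}) and that the grouping by the isomorphism class of $G\times H$ is consistent with the quotient by $\mathscr{I}$. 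The main step to articulate carefully is the topological invariance claim, since it requires the identification of weak orbit types under Morita equivalence; the additive and multiplicative steps are then essentially bookkeeping arguments applied to the known properties of $\chi$ on definable sets.
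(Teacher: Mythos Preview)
Your proposal is correct and follows essentially the same approach as the paper: topological invariance via preservation of isotropy isomorphism classes under the fibered-product condition of an essential equivalence, additivity via the decomposition $\vert\G\vert_{[G]} = \vert\G_1\vert_{[G]} \sqcup \vert\G_2\vert_{[G]}$ and additivity of $\chi$, and multiplicativity via $(\G\times\mathcal{H})_{(x,y)}^{(x,y)} = \G_x^x\times\mathcal{H}_y^y$ together with the identification $\vert\G\times\mathcal{H}\vert \cong \vert\G\vert\times\vert\mathcal{H}\vert$. Your multiplicativity argument is slightly more explicit than the paper's in spelling out the decomposition of $\vert\G\times\mathcal{H}\vert_{[K]}$ and the role of the relation $T^{[G]}T^{[H]}=T^{[G\times H]}$, but the substance is the same.
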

\begin{proof}
That $\chi^{\un}$ is a topological invariant of orbit space definable groupoids follows from
the fact that a topological Morita equivalence preserves the isomorphism class of the isotropy group of each orbit
(as \eqref{eq:WeakEquivFullyFaithful} is a fibered product); see the proof of \cite[Thrm.~4.3.1]{delHoyoOrbispace}.
Hence if $\G$ and $\HH$ are topologically Morita equivalent, then the sets $\vert\G\vert_{[G]}$
and $\vert\HH\vert_{[G]}$ are homeomorphic. If $\G$ is an orbit space definable groupoid and
$\vert\G\vert = \vert\G_1\vert\sqcup \vert\G_2\vert$ for orbit space definable subgroupoids $\G_1$ and $\G_2$ of $\G$,
then clearly $\vert\G_i\vert_{[G]} = \vert\G\vert_{[G]}\cap \vert\G_i\vert$ for each $[G]\in\mathscr{G}$ and $i=1,2$
so that additivity of $\chi^{\un}$ follows from the additivity of $\chi$. Similarly, if $\G$ and $\HH$
are orbit space definable groupoids, then the identification of the $(\G\times\HH)$-orbit of $(x,y)\in\G_0\times\HH_0$
with $\G x\times \HH y \in \vert\G\vert\times\vert\HH\vert$ induces a homeomorphism
$\vert\G\times\HH\vert \to \vert\G\vert\times\vert\HH\vert$, and the isotropy group of
$(x,y)\in\G_0\times\HH_0$ is equal to $\G_x^x\times\HH_y^y$, from which multiplicativity follows.
\end{proof}

By virtue of this fact, the universal Euler characteristic can be used to define other additive and multiplicative topological
invariants of orbit space definable groupoids. The following is an immediate consequence of Lemma~\ref{lem:UnivECAddMult}.

\begin{corollary}
\label{cor:UnivECcircRAddMult}
Let $R$ be a ring and let $r:\mathscr{R}\to R$ be a ring homomorphism. Then $r\circ\chi^{\un}(\mathcal{G})$ defines
an additive and multiplicative topological invariant of orbit space definable groupoids.
\end{corollary}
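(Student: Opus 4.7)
The plan is to verify each of the three defining properties of Definition~\ref{def:OrbSpDefGpdInvar} for the composition $r\circ\chi^{\text{un}}$, pulling back the corresponding property of $\chi^{\text{un}}$ established in Lemma~\ref{lem:UnivECAddMult} through the ring homomorphism $r$. Since $\chi^{\text{un}}$ already takes values in the ring $\mathscr{R}$ and $r$ by hypothesis preserves sums and products, the verification is essentially bookkeeping — there is no serious obstacle. The subtlety (such as it is) lies only in noting that $r$ need not be surjective, injective, or unital in any particular way beyond being a ring homomorphism; none of these properties are needed for what follows.

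First, I would check topological invariance: if $\G$ and $\mathcal{H}$ are topologically Morita equivalent orbit space definable groupoids, then by Lemma~\ref{lem:UnivECAddMult} we have $\chi^{\text{un}}(\G)=\chi^{\text{un}}(\mathcal{H})$ in $\mathscr{R}$, and applying the (in particular well-defined) map $r$ yields $r\circ\chi^{\text{un}}(\G)=r\circ\chi^{\text{un}}(\mathcal{H})$ in $R$. Next, for additivity, suppose $\vert\G\vert=\vert\G_1\vert\sqcup\vert\G_2\vert$ with $\G_1,\G_2$ orbit space definable subgroupoids. By Lemma~\ref{lem:UnivECAddMult}, $\chi^{\text{un}}(\G)=\chi^{\text{un}}(\G_1)+\chi^{\text{un}}(\G_2)$ in $\mathscr{R}$, and since $r$ is additive,
\[
    r\circ\chi^{\text{un}}(\G) = r\bigl(\chi^{\text{un}}(\G_1)+\chi^{\text{un}}(\G_2)\bigr) = r\circ\chi^{\text{un}}(\G_1)+r\circ\chi^{\text{un}}(\G_2).
\]

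Finally, for multiplicativity, let $\G$ and $\mathcal{H}$ be orbit space definable groupoids, noting per Remark~\ref{rem:OrbSpDefGpdInvar}(3) that $\G\times\mathcal{H}$ is again orbit space definable. Lemma~\ref{lem:UnivECAddMult} gives $\chi^{\text{un}}(\G\times\mathcal{H})=\chi^{\text{un}}(\G)\cdot\chi^{\text{un}}(\mathcal{H})$ in $\mathscr{R}$, and since $r$ preserves products,
\[
    r\circ\chi^{\text{un}}(\G\times\mathcal{H}) = r\bigl(\chi^{\text{un}}(\G)\cdot\chi^{\text{un}}(\mathcal{H})\bigr) = \bigl(r\circ\chi^{\text{un}}(\G)\bigr)\cdot\bigl(r\circ\chi^{\text{un}}(\mathcal{H})\bigr).
\]
This exhausts the conditions in Definition~\ref{def:OrbSpDefGpdInvar}, so $r\circ\chi^{\text{un}}$ is an additive and multiplicative topological invariant of orbit space definable groupoids taking values in $R$, completing the proof.
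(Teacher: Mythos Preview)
Your proof is correct and follows exactly the approach the paper intends: the paper states this corollary as an immediate consequence of Lemma~\ref{lem:UnivECAddMult} without further argument, and you have simply spelled out the routine verification that a ring homomorphism preserves the three defining properties.
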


Restricting to isotropy locally trivial orbit space definable groupoids, the invariant $\chi^{\un}$ is universal in the following sense.

\begin{theorem}[Every additive, multiplicative invariant of isotropy loc. triv. orbit space definable groupoids is a composition with $\chi^{\un}$]
\label{thrm:ECAddUnique}
Let $I$ be a topological invariant of orbit space definable groupoids taking values in a ring $R$.
Then $I$ is additive and multiplicative on isotropy locally trivial groupoids if and only if there is a unique ring homomorphism $r:\mathscr{R}\to R$
such that $I(\mathcal{G})=r\circ\chi^{\un}(\mathcal{G})$ for each isotropy locally trivial orbit space definable groupoid.
\end{theorem}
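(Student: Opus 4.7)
The plan is as follows. One direction is immediate: if $I$ factors as $r\circ\chi^{\text{un}}$ for a ring homomorphism $r$, then Corollary~\ref{cor:UnivECcircRAddMult} already establishes that $I$ is additive and multiplicative, hence in particular so on the subclass of isotropy locally trivial groupoids. The substantive content is the converse together with uniqueness.

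For the converse, I would first construct $r$ on generators of $\mathscr{P}$ by setting
\[
    r(T^{[G]}) := I(G\ltimes\{p\})
\]
and extending $\Z$-linearly. The key verification that this descends to a well-defined ring homomorphism $r\co\mathscr{R} = \mathscr{P}/\mathscr{I}\to R$ is the identity $r(T^{[G]})\cdot r(T^{[H]}) = r(T^{[G\times H]})$, which follows from the multiplicativity of $I$ combined with the evident isomorphism $(G\ltimes\{p\})\times(H\ltimes\{p\})\cong (G\times H)\ltimes\{p\}$ of point groupoids. Uniqueness is then automatic: the values of $r$ on the generating classes $T^{[G]}$ are forced by $r(T^{[G]}) = r(\chi^{\text{un}}(G\ltimes\{p\})) = I(G\ltimes\{p\})$, and $\mathscr{R}$ is generated as a ring by the $T^{[G]}$.

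The main computational step is the identity $I(\G) = r(\chi^{\text{un}}(\G))$ for each isotropy locally trivial orbit space definable groupoid $\G$. I would choose an isotropy locally trivial cell decomposition $\vert\G\vert = C_1\sqcup\cdots\sqcup C_k$ subordinate to the partition into weak orbit type images $\vert\G\vert_{[H]}$. By Remark~\ref{rem:OrbSpDefGpdInvar}(2), each restriction $\G_{|\pi^{-1}(C_i)}$ is an orbit space definable subgroupoid of $\G$, and as noted after the definition of isotropy local triviality, each such subgroupoid inherits isotropy local triviality from $\G$. An iterated application of additivity yields $I(\G) = \sum_{i=1}^k I(\G_{|\pi^{-1}(C_i)})$. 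By Lemma~\ref{lem:ECGpdAddUnique}, each summand equals $(-1)^{\dim C_i} I(H_i\ltimes\{p\}) = (-1)^{\dim C_i} r(T^{[H_i]})$, where $H_i$ denotes the isotropy type of the weak orbit type containing $C_i$. Grouping cells by isomorphism class of isotropy and using the additivity of $\chi$ in the form $\chi(\vert\G\vert_{[H]}) = \sum_{C_i\subseteq\vert\G\vert_{[H]}} (-1)^{\dim C_i}$ repackages the sum as $r(\chi^{\text{un}}(\G))$.

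The main obstacle I anticipate is ensuring the iterated additivity argument is clean: the additivity axiom in Definition~\ref{def:OrbSpDefGpdInvar} is formulated only for two-piece decompositions of $\vert\G\vert$, so extending it to a finite disjoint union of cells requires an induction in which both pieces at each stage must be verified to be isotropy locally trivial orbit space definable subgroupoids of $\G$ so that the additivity hypothesis applies. This reduces to the stability of definable subsets under finite unions and complements, together with the remark on inheritance of isotropy local triviality. Once this bookkeeping is settled, what remains is a routine synthesis of Lemma~\ref{lem:ECGpdAddUnique} with the description of $\mathscr{R}$ as a quotient polynomial ring.
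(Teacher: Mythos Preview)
Your proposal is correct and follows essentially the same route as the paper: define $r$ on generators by $r(T^{[G]}) = I(G\ltimes\{p\})$, use multiplicativity and the isomorphism $(G\ltimes\{p\})\times(H\ltimes\{p\})\cong(G\times H)\ltimes\{p\}$ for well-definedness, and compute $I(\G)$ via an isotropy locally trivial cell decomposition together with Lemma~\ref{lem:ECGpdAddUnique}. The only cosmetic difference is that the paper first applies additivity over the weak orbit types $\vert\G\vert_{[G]}$ and then invokes Proposition~\ref{prop:ECGpdAddUnique} on each single-isotropy-type piece, whereas you work directly at the cell level and regroup at the end; since Proposition~\ref{prop:ECGpdAddUnique} is itself proved by exactly that cell-level argument, the two are the same computation organized slightly differently.
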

\begin{proof}
Assume $I$ is additive and multiplicative on isotropy locally trivial groupoids and $\G$ is an isotropy locally trivial orbit space definable groupoid.
Then $\vert\G\vert$ can be partitioned into a finite number of affine definable spaces $\vert\G\vert_{[G]}$ such that each $G$ is compact and $\G_{\vert\pi^{-1}(\vert\G\vert_{[G]})}$ is a subgroupoid of $\G$ having orbit space $\vert\G\vert_{[G]}$.  Thus, by additivity of $I$ and Proposition~\ref{prop:ECGpdAddUnique},
\begin{equation}
\label{eq:ECAddSum}
    I(\G)=\sum_{[G]\in\mathscr{G}}\chi(\vert\G\vert_{[G]})I(G\ltimes\{p\}),
\end{equation}
where $p$ is a point.  For each $[G]\in\mathscr{G}$, let $r(T^{[G]})=I(G\ltimes\{p\})$ and extend $r$ by additivity and multiplicativity to a ring homomorphism $\mathscr{R}\to R$.  Observe that since $(G\ltimes\{p\})\times (H\ltimes \{p\})$ is groupoid isomorphic to $(G\times H)\ltimes \{p\}$ for any compact groups $G$ and $H$, we have that $(G\ltimes\{p\})\times (H\ltimes \{p\})$ and $(G\times H)\ltimes \{p\}$ are topologically Morita equivalent.  Thus, since $I$ is a multiplicative invariant of orbit space definable groupoids, $r$ is well-defined as a map $\mathscr{R}\to R$.  From the definitions of $I$, $\chi^{\un}$, and $r$, it is immediate that $I(\mathcal{G})=r\circ\chi^{\un}(\mathcal{G})$.

To see that $r$ is unique, suppose that $r^{\prime}:\mathscr{R}\to R$ is any ring homomorphism satisfying $I(\mathcal{G})=r\circ\chi^{\un}(\mathcal{G})$
for isotropy locally trivial orbit space definable groupoids $\G$.
Let $[G]\in\mathscr{G}$ and apply both sides of this equation to the point groupoid $G\ltimes \{p\}$.   Since the Euler characteristic of a point equals $1$, we have $I(G\ltimes\{p\})=r^{\prime}\circ\chi^{\un}(G\ltimes\{p\})=r^{\prime}\circ(T^{[G]})$.  Thus $r^{\prime}=r$.

Conversely, if $r:\mathscr{R}\to R$ is a ring homomorphism, then $I(\mathcal{G})=r\circ\chi^{\un}(\mathcal{G})$ defines a topological invariant
of orbit space definable groupoids that is clearly additive and multiplicative on isotropy locally trivial orbit space definable groupoids.
\end{proof}

Let us consider a slight generalization of the notion of constructible function; see
Section~\ref{subsec:BackOrbSpaceDefinGpoid} and \cite{ViroIntegralEulerChar} or \cite[Sec.~2--4]{CurryGhristEAEulerCalc} for more details.
Given an affine definable space $X$ and a ring $R$, we say that a function $f:X\to R$ is \textit{constructible} if $f^{-1}(k)$ is a definable subset of
$X$ for each $k\in R$. If $f$ takes only finitely many values in $R$, then we may define the \textit{integral of $f$ over $X$ with respect to the Euler characteristic} by
\[
    \int_X f(x)\,d\chi(x) = \sum_{s\in R}\chi(f^{-1}(s))s.
\]
Observe that if $\G$ is an orbit space definable groupoid, and thus $\vert\G\vert$ is finitely partitioned according to weak isotropy type, we can express
\[
    \chi^{\un}(\G)=\int_{\vert\G\vert} T^{[\G_x^x]}\, d\chi(\G x)
\]
where the integrand $T^{[\G_x^x]}$ is the $\mathscr{R}$-valued function that assigns to the orbit $\G x\in\vert\G\vert$ the variable $T^{[\G_x^x]}$ associated to the isotropy group $\G_x^x$, whose isomorphism class depends only on the orbit $\G x$. With this, we may use Equation~\eqref{eq:ECAddSum} to reinterpret
Theorem~\ref{thrm:ECAddUnique} as follows.

\begin{corollary}
\label{cor:ECAddUniqueIntegral}
Let $I$ be a topological invariant of orbit space definable groupoids taking values in a ring $R$.
Then $I$ is additive and multiplicative on isotropy locally trivial groupoids if and only if there is a unique ring homomorphism $r:\mathscr{R}\to R$
such that
\begin{equation}
\label{eq:ECAddUniqueIntegral}
    I(\G)   =   \int_{\vert\G\vert} r(\G_x^x)\, d\chi(\G x)
\end{equation}
\end{corollary}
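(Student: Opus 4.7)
The plan is to derive this corollary directly from Theorem~\ref{thrm:ECAddUnique} by rewriting the composition $r\circ\chi^{\text{un}}$ as an integral with respect to the Euler characteristic measure, using the integral representation of $\chi^{\text{un}}$ stated immediately before the corollary. Since both directions of Theorem~\ref{thrm:ECAddUnique} are already established, the work reduces to a reformulation, with the only substantive point being that $r$ commutes with the finite sum that defines the integral.

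First, assume $I$ is additive and multiplicative on isotropy locally trivial groupoids. By Theorem~\ref{thrm:ECAddUnique}, there is a unique ring homomorphism $r\co\mathscr{R}\to R$ with $I(\G) = r\circ\chi^{\text{un}}(\G)$ for every isotropy locally trivial orbit space definable groupoid $\G$. Since $\G$ is orbit space definable, the partition $\{\vert\G\vert_{[G]}\}_{[G]\in\mathscr{G}}$ of $\vert\G\vert$ is finite, so the function $\G x\mapsto T^{[\G_x^x]}$ is constructible and takes only finitely many values in $\mathscr{R}$. Applying $r$ to the definition of $\chi^{\text{un}}$ in Equation~\eqref{eqn:univEC} and using that $r$ is additive,
\begin{align*}
    I(\G)
    &= r\!\left(\sum_{[G]\in\mathscr{G}}\chi(\vert\G\vert_{[G]})\,T^{[G]}\right)
    \\&= \sum_{[G]\in\mathscr{G}}\chi(\vert\G\vert_{[G]})\,r(T^{[G]})
    \\&= \int_{\vert\G\vert} r(T^{[\G_x^x]})\, d\chi(\G x),
\end{align*}
which is the integral in Equation~\eqref{eq:ECAddUniqueIntegral}. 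Uniqueness of $r$ is inherited from Theorem~\ref{thrm:ECAddUnique}, since evaluating both sides on the point groupoid $G\ltimes\{p\}$ forces $r(T^{[G]}) = I(G\ltimes\{p\})$ for every $[G]\in\mathscr{G}$.

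Conversely, suppose $r\co\mathscr{R}\to R$ is a ring homomorphism and $I$ is defined by the integral in Equation~\eqref{eq:ECAddUniqueIntegral}. Reversing the computation above shows that $I(\G) = r\circ\chi^{\text{un}}(\G)$ on isotropy locally trivial orbit space definable groupoids, and Theorem~\ref{thrm:ECAddUnique} (or directly Corollary~\ref{cor:UnivECcircRAddMult}) then guarantees that $I$ is an additive and multiplicative topological invariant on this class. There is no genuine obstacle here; the only point worth verifying carefully is that the integrand is a well-defined $R$-valued constructible function with finite image, which holds because the weak-orbit-type decomposition of $\vert\G\vert$ is finite and each $\vert\G\vert_{[G]}$ is definable by the definition of an orbit space definable groupoid.
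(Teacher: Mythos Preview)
Your proof is correct and follows the same approach the paper indicates: the corollary is stated as a direct reinterpretation of Theorem~\ref{thrm:ECAddUnique} via Equation~\eqref{eq:ECAddSum} and the integral expression for $\chi^{\text{un}}$, and your argument carries this out explicitly. The paper does not give a separate proof, so your write-up is more detailed but not different in substance.
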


We conclude this section with two examples.

\begin{example}
\label{ex:GammaEC}
Recall from \cite[Def.~3.8]{FarSeaJLMS} that for a finitely presented group $\Gamma$ and an orbit space definable groupoid $\G$, the \textit{$\Gamma$-Euler characteristic of $\G$} is given by
\begin{equation}
\label{eq:GammaECDef}
    \chi_{\Gamma}(\G)=\int_{\vert\G\vert}\chi\big(\G_x^x\backslash\HOM(\Gamma,\G_x^x)\big)\, d\chi(\G x),
\end{equation}
where $\G_x^x$ acts on $\HOM(\Gamma,\G_x^x)$ by pointwise conjugation.
Here, we are particularly interested in the case $\Gamma = \Z$. In this case, fixing a generator of $\Z$ identifies $\HOM(\Z,\G_x^x)$ with $\G_x^x$
so that Equation~\eqref{eq:GammaECDef} becomes
\begin{equation}
\label{eq:ZECDef}
    \chi_{\Z}(\G)=\int_{\vert\G\vert}\chi\big(\Ad_{\G_x^x}\backslash\G_x^x\big)\, d\chi(\G x).
\end{equation}
Observe that $\chi_{\Gamma}$ is an invariant of orbit space definable groupoids taking values in $\mathbb{Z}$ that is both additive and multiplicative.
Indeed, additivity of $\chi_{\Gamma}$ follows from the additivity of $\chi$ applied to the orbit space $\vert\G\vert$; see \cite[Cor.~4.16]{FarSeaJLMS}.
To see that multiplicativity holds, consider
\begin{align*}
\chi_{\Gamma}(\G\times\HH) &=\int_{\vert\G\times\HH\vert}\chi\big((\G\times\HH)_{(x,y)}^{(x,y)}\backslash \HOM(\Gamma, (\G\times\HH)_{(x,y)}^{(x,y)}\big)\, d\chi\big((\G\times\HH)(x,y)\big)\\
&=\int_{\vert\G\times\HH\vert}\chi\big((\G_x^x\times\HH_y^y)\backslash\HOM(\Gamma,\G_x^x\times\HH_y^y)\big)\, d\chi\big((\G\times\HH)(x,y)\big)\\
&=\int_{\vert\G\times\HH\vert}\chi\big((\G_x^x\backslash\HOM(\Gamma,\G_x^x)\times(\HH_y^y\backslash\HOM(\Gamma,\HH_y^y)\big)\, d\chi\big((\G\times\HH)(x,y)\big)\\
&=\int_{\vert\G\times\HH\vert}\chi\big(\G_x^x\backslash\HOM(\Gamma,\G_x^x)\big)\chi\big(\HH_y^y\backslash\HOM(\Gamma,\HH_y^y)\big)\, d\chi\big((\G\times\HH)(x,y)\big)\\
&=\sum_{[\G_x^x\times \HH_y^y]\in\mathscr{G}\times\mathscr{H}} \chi\big(\G_x^x\backslash\HOM(\Gamma,\G_x^x)\big)\chi\big(\HH_y^y\backslash\HOM(\Gamma,\HH_y^y)\big)\chi\big(\vert\G\times\HH\vert_{[G\times H]}\big)\\
&=\sum_{[\G_x^x]\in\mathscr{G}}\sum_{[\HH_y^y]\in\mathscr{H}}\chi\big(\G_x^x\backslash\HOM(\Gamma,\G_x^x)\big)
    \chi\big(\HH_y^y\backslash\HOM(\Gamma,\HH_y^y)\big)\chi\big(\vert\G\vert_{[\G_x^x]}\times\vert\HH\vert_{[\HH_y^y]}\big)\\
&=\sum_{[\G_x^x]\in\mathscr{G}}\sum_{[\HH_y^y]\in\mathscr{H}}\chi\big(\G_x^x\backslash\HOM(\Gamma,\G_x^x)\big)
    \chi\big(\HH_y^y\backslash\HOM(\Gamma,\HH_y^y)\big)\chi\big(\vert\G\vert_{[\G_x^x]}\big)\chi\big(\vert\HH\vert_{[\HH_y^y]}\big)\\
&=\chi_{\Gamma}(\G)\chi_{\Gamma}(\HH);
\end{align*}
see also \cite[Lem.~4.17]{FarSeaJLMS}.

In this case, we see that the homomorphism $r:\mathscr{R}\to\mathbb{Z}$ guaranteed by Theorem~\ref{thrm:ECAddUnique} is given by
\[
    r(T^{[G]}) = \chi\big(G\backslash\HOM(\Gamma,G)\big),
\]
which corresponds to $\chi_{\Gamma}(G\ltimes\{p\})$ for a point $p$, because the Euler characteristic of a point is 1.
\end{example}

\begin{example}
\label{ex:GammaESC}
Similarly, one may consider the homomorphism $r:\mathscr{R}\to\Q$ such that
\[
    r(T^{[G]}) = \frac{1}{\chi(G^\circ\backslash G)}
\]
where $G^\circ$ denotes the connected component of the identity of $G$
so that $\chi(G^\circ\backslash G)$ is the number of connected components of $G$.
In the case $\G$ presents an orbifold so that each isotropy group is finite, this yields the \emph{Euler-Satake characteristic} $\chi^{ES}(\G)$ of $\G$
\cite{LeinsterEulerCharCategory,SatakeGB,Thurston}. For a general orbit space definable groupoid, the resulting additive and multiplicative invariant given by
Theorem~\ref{thrm:ECAddUnique} can be expressed
\[
    \chi^{ES}(\G)   =  \int_{\vert\G\vert} \frac{1}{\chi(G^\circ\backslash G)}\, d\chi(\G x).
\]
We can similarly fix a finitely presented discrete group $\Gamma$ and define
\[
    r(T^{[G]}) = \int_{G\backslash\HOM(\Gamma,G)} \frac{1}{\chi\big(C_G(\phi)^\circ\backslash C_G(\phi)\big)} \, d\chi(G\phi),
\]
where $C_G(\phi)$ denotes the centralizer of $\phi$ in $G$, $\chi\big(C_G(\phi)^\circ\backslash C_G(\phi)\big)$ is the number of connected components
of $C_G(\phi)$, and $G\phi$ denotes the $G$-orbit of $\HOM(\Gamma,G)$ with $G$ acting by conjugation.
Note that the $G$-space $\HOM(\Gamma,G)$ can be identified with an algebraic subset of $G^\ell$ where the $G$-action is by simultaneous conjugation; see \cite[Not.~3.7]{FarSeaJLMS}.
As this $G$-action on $G^\ell$ has finitely many isotropy types,
the integrand $1/\chi\big(C_G(\phi)^\circ\backslash C_G(\phi)\big)$ is constructible.
This defines a generalization of the
$\Gamma$-Euler-Satake characteristic \cite{FarsiSeaGenOrbEuler,TamanoiCovering,TamanoiMorava} from orbifolds to orbit space definable groupoid given by
\begin{equation}
\label{eq:DefGammaESC}
    \chi_\Gamma^{ES}(\G)   =   \int_{\vert\G\vert}
                                \int_{\G_x^x\backslash\HOM(\Gamma,\G_x^x)}
                                    \frac{1}{\chi\big(C_{\G_x^x}(\phi)^\circ\backslash C_{\G_x^x}(\phi)\big)} \, d\chi(\G_x^x\phi) \, d\chi(\G x).
\end{equation}
See \cite[Rem.~2.11, Rem.~3.11, and Ex.~3.12]{FarSeaJLMS} for a discussion of these topological invariants of orbit space definable groupoids.
\end{example}

% xxxxxxxxxxxxxxxxxxxxxxxxxxxxxxxxxxxxxxxxxxxxxxxxxxxxxxxxxxxxxxxxxxxxxxxxx
% xxxxxxxxxxxxxxxxxxxxxxxxxxxxxxxxxxxxxxxxxxxxxxxxxxxxxxxxxxxxxxxxxxxxxxxxx
% xxxxxxxxxxxxxxxxxxxxxxxxxxxxxxxxxxxxxxxxxxxxxxxxxxxxxxxxxxxxxxxxxxxxxxxxx
\section{The Burnside group of a definable groupoid}
\label{sec:Burnside}

Let $\G$ be a definable groupoid.
In this section, we define the \emph{Burnside group} $B(\G)$ of $\G$,
which carries as well a partial product, analogous to the Burnside ring of a compact Lie group $G$, see \cite{tomDieckBurnsideI} and
\cite[Ch.~5]{tomDieckTGRepThry}. We will as well define an \emph{equivariant Euler characteristic} that takes values in $B(\G)$.
See \cite{KaoSpinCatfied,KaoSpinBurnsideTheory} for a well-developed theory of Burnside rings in the case of a finite groupoid.

Each element of the set $B(\G)$ will be represented by a $\G$-space that has finitely many orbit types and a definable $\G$-quotient.
Therefore, we henceforth
assume that $\G$ has finitely many orbit types and a definable quotient and restrict our attention to definable $\G$-sets $X$
that have finitely many orbit types and a definable $\G$-quotient, see
Definitions~\ref{def:OrbitTypeGrpoid}, \ref{def:DefinGSpace}, and \ref{def:OrbitTypeG-Space} as well as Remark~\ref{rem:DefinLieFiniteOrbit}.
Recall that $\G$ is said to have a definable quotient when the $\G$-space $\G_0$ has a definable $\G$-quotient.
This is the case when $\G$ is proper and source-proper by \cite[Proposition~3.5]{FarSeaJLMS},
or more generally when the equivalence relation on $\G_0$ of being in the same orbit is definably proper;
see Definition~\ref{def:DefinProperEquiv} and \cite[Ch.~10, (2.13) and Thrm.~(2.15)]{vandenDriesBook}.
In the case of a Lie group $G$, the Burnside ring is generated by transitive $G$-spaces, which arise as orbits of a $G$-action.
We will see that there are parallels in the definable groupoid case.

% xxxxxxxxxxxxxxxxxxxxxxxxxxxxxxxxxxxxxxxxxxxxxxxxxxxxxxxxxxxxxxxxxxxxxxxxx
\subsection{The underlying set of the definable Burnside group}
\label{subsec:BurnsideSet}

As explained above, the elements of the Burnside group $B(\G)$ will be equivalence classes of $\G$-spaces.
In order to define the appropriate equivalence relation, we will need the following two definitions. First,
we adapt the notions of a bisection and corresponding adjoint isomorphisms to the context of definable groupoids;
see \cite[Sec.~1.4]{MackenzieGT}.

\begin{definition}[Bisection, adjoint isomorphism]
\label{def:Bisection}
Let $\G$ be a definable groupoid and $S, T$ be subsets of $\G_0$. A \emph{definable bisection of $\G$ over $S$ with target $T$} is a definable
continuous function $\sigma\colon S\to G_1$ such that $s\circ\sigma = \id_S$ and such that the map $t\circ\sigma$,
denoted $\Ad(\sigma)_0$, is a definable homeomorphism of $S$ onto $T$.

Given a bisection $\sigma$ of $\G$ over $S\subseteq\G_0$ with target $T = \Ad(\sigma)_0(S)\subseteq\G_0$, the \emph{adjoint isomorphism} associated to $\sigma$
is the definable isomorphism $\Ad(\sigma)\colon \G_{|S} = \G\ltimes S \to \G_{|T} = \G\ltimes T$ whose map on objects is given
by $\Ad(\sigma)_0$ and whose map on arrows is given by $\Ad(\sigma)_1(h) = (\sigma\circ t(h))h(\sigma\circ s(h))^{-1}$.
Note that $\sigma\circ s(h)$ has source $s(h)$ and $\sigma\circ t(h)$ has source $t(h)$ so that $\Ad(\sigma)_1(h)$ is an arrow from
$\Ad(\sigma)_0\big(s(h)\big)\in T$ to $\Ad(\sigma)_0\big(t(h)\big)\in T$.
\end{definition}

Note that in \cite{MackenzieGT}, Mackenzie is working with Lie groupoids and only considers the case that $S$ is open. He refers to the adjoint
isomorphism $\Ad(\sigma)$ as the \emph{local inner automorphism defined by $\sigma$}; see \cite[Def.~1.4.8]{MackenzieGT}.

\begin{definition}[Admissible subgroupoid]
\label{def:AdmissableSubgroupoid}
We say that a source-open definable subgroupoid $\HH$ of $\G$ is \textbf{admissible} if
\begin{enumerate}
\item[i.]   $\HH$ is wide, i.e., $\HH_0 = \G_0$;
\item[ii.]  $\HH$ is closed, i.e., $\HH_1$ is a closed subset of $\G_1$;
\item[iii.] whenever $p_1,p_2\in\G_0$ are in distinct $\G$-orbits and have the same orbit type in the groupoid $\G$ so that there is a definable isomorphism
            $\varphi\colon\G_{|\G p_1} = \G\ltimes(\G p_1)\to\G_{|\G p_2} = \G\ltimes(\G p_2)$, there is a definable bisection $\sigma$ of
            $\G$ over $\G p_2$ with target $\Ad(\sigma)_0(\G p_2) = \G p_2$
            such that the isomorphism $\Ad(\sigma)\circ\varphi\colon\G_{|\G p_1} = \G\ltimes(\G p_1)\to\G_{|\G p_2} = \G\ltimes(\G p_2)$
            restricts to an isomorphism $\HH_{|\G p_1} = \HH\ltimes(\G p_1)\to\HH_{|\G p_2} = \HH\ltimes(\G p_2)$; and
\item[iv.]  whenever $p_1,p_2\in\G_0$ are in the same $\G$-orbit, there is a definable bisection $\sigma$ of
            $\G$ over $\HH p_1$ with target $\Ad(\sigma)_0(\HH p_1) = \HH p_2$ such that $\Ad(\sigma)$ is a definable isomorphism
            $\HH_{|\HH p_1} = \HH\ltimes(\HH p_1)\to\HH_{|\HH p_2} = \HH\ltimes(\HH p_2)$.
\end{enumerate}
\end{definition}

We remark that if $p_1$ and $p_2$ are in the same $\HH$-orbit,
then Condition~iv.\@ of Definition~\ref{def:AdmissableSubgroupoid} is trivial because $\sigma$ can be taken to be the unit map restricted to $\HH p_1 = \HH p_2$.
Furthermore, observe that Conditions~iii.\@ and iv.\@ imply that the partition into points with the same orbit type in the groupoid $\HH$ is coarser than
the partition into points with the same orbit type in the groupoid $\G$.

\begin{example}
\label{ex:AdmissibleGUnitTransitive}
Clearly, $\G$ is always an admissible subgroupoid of $\G$; the bisections in Conditions~iii.\@ and iv.\@ can be taken as above to be the restrictions of the unit map.
Similarly, the subgroupoid $\HH$ with $\HH_1 = u(\G_0)$, consisting only of units, is an admissible subgroupoid, again taking the bisections in Conditions~iii.\@ and iv.\@
to be restrictions of the unit map.

If $\G$ is transitive, then a source-open, closed, wide subgroupoid $\HH$ is admissible provided Condition~iv.\@ holds, i.e., for any $p_1, p_2\in\G_0$, there
is a definable bisection $\sigma$ of $\G$ over $\HH p_1$ with target $\HH p_2$ such that $\Ad(\sigma)$ is an isomorphism $\HH\ltimes(\HH p_1)\to\HH\ltimes(\HH p_2)$. In this case,
Condition~iii.\@ is trivial.
\end{example}

\begin{example}
\label{ex:AdmissibleBundle}
If $\G$ is a singular bundle of groups, see Definition~\ref{def:SingBundGrps}, then it is possible that a closed, wide subgroupoid
$\HH$ of $\G$ is not source-open. For instance, let $\G$ be as in Example~\ref{ex:SingBundGrpsR}, and let $\HH$ be the subgroupoid with
$\HH_0 = \R$ and $\HH_1 = \{ (h,x) \in\G_1 : h\in\SO_2(\R) \text{ if } x = 1, h = e \text{ if } x\neq 1 \}$. Then $\HH$ is clearly closed and wide, but if $U$
is a neighborhood of a nontrivial arrow $(h, 1)$ small enough so that $U$ does not contain $(e, x)$ for any $x\neq 1$,
then $s(U) = \{1\}$ is not open. Hence, our restriction to source-open topological groupoids implies that such an $\HH$ is not considered
an admissible subgroupoid of $\G$.

Now, following the notation of Definition~\ref{def:SingBundGrps},
let $\HH$ be a source-open, closed, wide subgroupoid of the singular bundle of groups $\G$. Condition~iii.\@ in Definition~\ref{def:AdmissableSubgroupoid}
implies that if $x\neq y\in \G_0$ have isomorphic isotropy, i.e., are contained in the interiors of simplices of the same dimension, then for
any isomorphism $\varphi\colon \G_x^x\to\G_y^y$, there is a $g\in\G_y^y$ such that $g\varphi(\HH_x^x) g^{-1} = \HH_y^y$.
This is in this case a strong condition, equivalently, the image of $\HH_y^y$ under any automorphism of $\G_y^y$ is conjugate to
$\HH_y^y$ in $\G_y^y$. Condition~iv.\@ is in this case trivial, as we must have $p_1 = p_2$ so that the bisection can again be taken
to be the restriction of the unit map.

Finally, let $\HH$ be the singular bundle of groups over $X$, with the same definable triangulation that is used to define $\G$, associated to a choice
$H_0^\prime \lneq H_1^\prime \lneq \cdots\lneq H_d^\prime$ where each $H_i^\prime$ is a closed subgroup of $H_i$ whose image under any automorphism of $H_i$
coincides with its image under an inner automorphism of $H_i$. It follows from the above observations that $\HH$ is an admissible subgroup of $\G$.
\end{example}

\begin{example}
\label{ex:AdmissibleO2}
Let $\G$ be the groupoid in Example~\ref{ex:O(2)onR}, and let $\HH$ be the wide subgroupoid of $\G$ such that $\HH_1$ is given by arrows $(g, x)$
with $g\in\SO_2(\R)$ when $x\neq 0$ as well as $(e,0)$. Then $\HH$ is clearly closed and source-open. If $x\neq 0$, then the $\G$-orbit $\{\pm x\}$ of $x$ is the union of
two $\HH$-orbits, which are singletons. To verify Condition~iii.\@ of Definition~\ref{def:AdmissableSubgroupoid}, suppose $x,y\in\R$ are nonzero such that $x\neq \pm y$.
Any isomorphism $\varphi\colon\G\ltimes(\G x)\to\G\ltimes(\G y)$ must map units to units. If $\varphi_0(x) = y$, then $\varphi(-x) = -y$, and $\varphi$ must restrict to
an isomorphism mapping $\SO_2(\R)\times\{x\}$ onto $\SO_2(\R)\times\{y\}$
by continuity; hence, $\sigma(y) = u(y)$ satisfies the condition. If $\varphi_0(x) = -y$, then $\sigma(-y)$ can be taken to be
any arrow from $-y$ to $y$, i.e., $(h, -y)$ for any $h\in\OO_2(\R)\smallsetminus\SO_2(\R)$, and we can set $\sigma(y) = (h,-y)^{-1} = (h^{-1}, y)$.
Then $\Ad(\sigma)\circ\varphi$ restricts to a groupoid isomorphism $\SO_2(\R)\times\{x\}\to\SO_2(\R)\times\{y\}$.
Similarly, Condition~iv.\@ of Definition~\ref{def:AdmissableSubgroupoid} is satisfied for $\HH$.
If $x\in\R$ is nonzero, then a definable isomorphism $\HH\ltimes(\HH x)\to\HH\ltimes(\HH (-x))$ is given by the bisection
$\sigma$ that maps $x$ to $(h,x)$ where $h$ is any choice of element of $\OO_2(\R)\smallsetminus\SO_2(\R)$.
In particular, $t\circ\sigma$ maps $x$ to $-x$, and $\Ad(\sigma)$ is an isomorphism of $\HH\ltimes\{x\}$ onto
$\HH\ltimes\{-x\}$. Hence $\HH$ is admissible.
\end{example}

\begin{example}
\label{ex:AdmissibleLieGrp}
Let $G$ be a compact Lie group considered as a groupoid with a single object. Then Condition~i.\@ of Definition~\ref{def:AdmissableSubgroupoid} is trivial,
Condition~iii.\@ is vacuous, and in Condition~iv.\@, $p_1 = p_2$ so that $\sigma$ can be taken to be the restriction of the unit map; each subgroupoid
of $G$ is source-open so that the admissible subgroupoids are the closed subgroups $H$ of $G$.
\end{example}

Suppose $X$ is a $\G$-space with a definably proper quotient and $\HH$ is an admissible subgroupoid of $\G$. We now show that $X$ has a definably proper quotient
with respect to the $\HH$-action provided $\G$ is source-proper.

\begin{proposition}
\label{prop:HOrbitMapDefinProper}
Suppose $\G$ is source-proper and the equivalence relation on $X$ of being in the same $\G$-orbit is definably proper.
If $\HH$ is an admissible subgroupoid of $\G$, then the equivalence relation on $X$ of points being in the same
$\HH$-orbit is definably proper.
\end{proposition}
\begin{proof}
Let $E_\G = \{ (x,y)\in X\times X : \G x = \G y\}$ and $E_\HH = \{ (x,y)\in X\times X : \HH x = \HH y\}$,
and let $\pr_i^\G\colon E_\G\to X$ and $\pr_i^{\HH}\colon E_\HH\to X$ denote the projections onto the $i$th factor, $i=1,2$.
Let $K$ be a definable compact subset of $X$, and then $(\pr_1^\G)^{-1}(K)$ is definable and compact by hypothesis. We claim that
$(\pr_1^\HH)^{-1}(K)$ is a closed subset of $(\pr_1^\G)^{-1}(K)$ and hence compact. Suppose $(x_i, y_i)$ is a convergent sequence of elements of $(\pr_1^\HH)^{-1}(K)$ so that for each $i$, there is an $h_i\in\HH$ such that $h_i \ast x_i = y_i$.
Let $x = \lim_{i\to\infty} x_i$ and $y = \lim_{i\to\infty} y_i$. Because $K$ is compact and each $x_i\in K$, we have $x\in K$.
Because $\G$ is source-proper, $s_\G^{-1}(K)$ is compact. By restricting to a subsequence, we can assume the $h_i$ converge to some $h\in\G$,
and then as $\HH$ is a closed subgroupoid of $\G$, we have $h\in\HH$. Then by continuity, $hx = y$ so that
$(x,y)\in(\pr_1^\HH)^{-1}(K)$.
\end{proof}

The elements of the Burnside group $B(\G)$ are defined in a manner analogous to the case of a compact Lie group in \cite[Sec.~5.5]{tomDieckTGRepThry}. Elements of $B(\G)$
are represented by definable $\G$-spaces $X$, where we replace tom Dieck's tameness hypotheses, called $G$-ENR's in that reference, with the requirement that
$X$ has finitely many orbit types and a definable quotient. Two such $\G$-spaces represent the same class in $B(\G)$ if their fixed-point sets with respect
to the action of any admissible subgroupoid have the same Euler characteristic.

\begin{definition}[Underlying set of the definable Burnside group]
\label{def:BurnsideSet}
Let $\G$ be a definable groupoid with finitely many orbit types and a definable quotient,
and let $X$ be a $\G$-space with anchor map $\alpha$.
If $\HH$ is a subgroupoid of $\G$, then $X^{\HH}$ denotes the set of fixed points of $\HH$ in $X$, i.e., the points $x\in X$ such that
$h\ast x = x$ for each $h\in s^{-1}(\alpha(x))\cap\HH_1$. Note that $X^{\HH}$ is a definable subset of $X$.
Let $[X]$ be the equivalence class of $X$ via the equivalence relation $[X] = [X^\prime]$ if
\[
    \chi(X^\HH) = \chi\big((X^\prime)^\HH\big)
\]
for any admissible subgroupoid $\HH$ of $\G$. We define the set $B(\G)$ to be the set of equivalence classes
$[X]$ of $X$ such that $X$ has finitely many orbit types and a definable quotient.
\end{definition}

In the case of a $G$-space $X$ where $G$ is a group, the class represented by $X$ in the Burnside group is sometimes referred to as the equivariant
Euler characteristic of $X$; see \cite[Sec.~3]{GZ-EquivariantAnalogues}. We therefore make the following definition.

\begin{definition}[Equivariant Euler characteristic]
\label{def:EqEC}
Let $\G$ be a definable groupoid with finitely many orbit types and a definable quotient and let $(X,\alpha)$ be a definable $\G$-space representing a class in $B(\G)$.
The \emph{equivariant Euler characteristic} $\chi_{\G}^{\eq}(X)$ of $X$ is the class $[X] \in B(\G)$. The \emph{equivariant Euler characteristic} of $\G$ is defined to be
$\chi^{\eq}(\G) = \chi_{\G}^{\eq}(\G_0)$.
\end{definition}

Suppose $G$ is a compact Lie group considered as a groupoid with one object.
The Burnside group $B(G)$ is then generated by equivalence classes of definable $G$-spaces $X$ with finitely many
orbit types under the equivalence $[X] = [Y]$ if $\chi(X^H) = \chi(Y^H)$ for any closed subgroup $H\leq G$; see Example~\ref{ex:AdmissibleLieGrp}
and note that a $G$-space $X$ in this case always has a definable quotient by \cite[Ch.~10, Cor.~(2.18)]{vandenDriesBook}.
Hence, $B(G)$ is in this case an analog of
the Burnside ring of $G$ defined in \cite[Sec.~5.5]{tomDieckTGRepThry} using definable $G$-spaces having finitely many orbit types and definable quotients
in place of the $G$-ENR defined in \cite[Sec.~5.2]{tomDieckTGRepThry}.

% xxxxxxxxxxxxxxxxxxxxxxxxxxxxxxxxxxxxxxxxxxxxxxxxxxxxxxxxxxxxxxxxxxxxxxxxx
\subsection{The definable Burnside group operation}
\label{subsec:BurnsideGroup}

To give $B(\G)$ the structure of an abelian group, we define $[X] + [Y] = [X \sqcup Y]$ where $X\sqcup Y$ has the obvious
$\G$-action: If $\alpha\colon X\to\G_0$ and $\beta\colon Y\to\G_0$ are the anchor maps of $X$ and $Y$, respectively, then $X\sqcup Y$
has anchor map $\alpha\sqcup\beta$, and the action of $g\in\G_1$ on $x\in X\sqcup Y$ is the action on $X$ if
$x\in X$ and $Y$ if $x\in Y$. Clearly, this operation is well-defined; as $(X \sqcup Y)^{\HH} = X^{\HH} \sqcup Y^{\HH}$ for any admisible
subgroup $\HH$ of $\G$, if $[X] = [X^\prime]$ and $[Y] = [Y^\prime]$, then $[X \sqcup Y] = [X^\prime \sqcup Y^\prime]$. As well,
$+$ is a commutative and associative operation with identity given by the class $[\emptyset]$ of the empty set $\emptyset$.
Let us demonstrate the existence of additive inverses following \cite{tomDieckBurnsideI}.

\begin{proposition}
\label{prop:BurnsideAdditiveInverse}
Let $\G$ be a definable groupoid with finitely many orbit types and a definable quotient,
let $(X,\alpha)$ be a $\G$-space with finitely many orbit types and a definable quotient.
Then $[X]$ has an additive inverse in $B(\G)$.
\end{proposition}
\begin{proof}
Let $S$ be a definable set with $\chi(S) = -1$, e.g., we can take $S$ to be the open interval $(0, 1)$.
Define $X^- = S \times X$, which we make a $\G$-space as follows. The anchor map $\alpha_-$ is given by
\[
    \alpha_- (s, x) = \alpha(x),
\]
and the action map is $g\ast(s, x) = (s, g\ast x)$. For each admissible subgroupoid $\HH$ of $\G$, we have
$(X^-)^\HH = S \times X^\HH$ so that
\begin{align*}
    \chi\big( (X \sqcup X^-)^\HH \big)
    &=      \chi\big( X^\HH \sqcup (S \times X^\HH) \big)
    \\&=    \chi\big( X^\HH \big) + \chi\big( S \big)\chi\big( X^\HH \big)
    \\&=    \chi\big( X^\HH \big) - \chi\big( X^\HH \big) = 0.
\end{align*}
Hence $[X] + [X^-] = [X\sqcup X^-]$ is equivalent to the additive identity $[\emptyset]$, implying $[X^-] = -[X]$.
\end{proof}

Let us consider a slight generalization of the construction in the proof of Proposition~\ref{prop:BurnsideAdditiveInverse}.

\begin{definition}[Trivial product $\G$-space]
\label{def:TrivialProduct}
Let $\G$ be a definable groupoid with finitely many orbit types and a definable quotient and
let $(X,\alpha)$ be a $\G$-space with finitely many orbit types and a definable quotient.
Let $Y$ be any definable space. Define the \textbf{trivial product $\G$-space of $Y$ and $X$} to be the $\G$-space $Y\ttimes X = (Y\times X, \alpha_{\triv})$
with anchor map
\[
    \alpha_{\triv} (y, x) = \alpha(x)
\]
and action map $g\ast(y, x) = (y, g\ast x)$. We will sometimes use the notation $Y\ttimes X$ for the space $Y\times X$ to emphasize that the anchor and $\G$-action
are those given above.
\end{definition}

We now detail a sequence of lemmas which will lead to the proof of Theorem~\ref{thrm:BurnsideGenTransitive}, that $B(\G)$ is generated as a group
by the classes of transitive $\G$-spaces.
We start by noticing that for each admissible subgroupoid $\HH$ of $\G$, we have
$(Y\ttimes X)^\HH = Y\ttimes X^\HH$ so that
\begin{align*}
    \chi\big( (Y\ttimes X)^\HH \big)
    &=      \chi\big( Y\ttimes X^\HH \big)
    \\&=    \chi\big( Y \big)\chi\big( X^\HH \big).
\end{align*}
In particular, assume $\chi(Y) > 0$ and let $F$ be a finite set consisting of $\chi(Y)$ points. Then
\[
    [Y\ttimes X] =   [F\ttimes X] = \big[ \overset{\chi(Y)}{\overbrace{X\sqcup\cdots\sqcup X}} \big]
                =   \overset{\chi(Y)}{\overbrace{[X] + \cdots + [X]}}.
\]
If $\chi(Y) < 0$, then we similarly have
\[
    [Y \ttimes X] =   \overset{-\chi(Y)}{\overbrace{-[X] - \cdots - [X]}}
\]
and if $\chi(Y) = 0$, then $[Y\ttimes X] = [\emptyset]$.
Summarizing the above discussion, we thus obtain the first of the lemmas which will be used to prove Theorem~\ref{thrm:BurnsideGenTransitive}.

\begin{lemma}
\label{lem:TrivialProduct}
Let $\G$ be a definable groupoid with finitely many orbit types and a definable quotient,
let $(X,\alpha)$ be a $\G$-space with finitely many orbit types and a definable quotient, and
let $Y$ be any definable space. Then
\[
    [Y\ttimes X] =   \chi(Y)[X].
\]
\end{lemma}

\begin{remark}
\label{rem:BurnsideInfiniteOrbitTypes}
Up to this point, the hypothesis that $\G$ has finitely many orbit types and a definable quotient was not necessary,
and an analogous definition of $B(\G)$ could be given consisting of classes $[X]$ of arbitrary definable $\G$-spaces $X$ with no further restrictions.
Proposition~\ref{prop:BurnsideAdditiveInverse} made no use of these hypotheses, so this would as well yield the definition of a group; similarly,
Lemma~\ref{lem:TrivialProduct} also did not use these hypotheses. However, it is unclear whether a definable groupoid $\G$ or definable $\G$-space $X$
can have infinitely many orbit types. Moreover, the more detailed description of $B(\G)$ that we now develop requires these restrictions.
Note that the hypothesis that $\G$ has finitely many orbit types and a definable quotient ensures that $\G_0$ will always represent a class in $B(\G)$.
\end{remark}

Now, let $X$ and $\G$ be as above and let $X = X_1\sqcup X_2 \sqcup\cdots\sqcup X_r$ denote the partition into points with the same orbit type.
Then $[X] = [X_1] + \cdots + [X_r]$ in $B(\G)$. That is, we have the following.

\begin{proposition}
\label{prop:BurnsideSingleOrbitType}
Let $\G$ be a definable groupoid with finitely many orbit types. Then $B(\G)$ is generated as an abelian group by the classes
of $\G$-spaces that have a single orbit type. Specifically, if $X$ is a $\G$-space that represents a class in $B(\G)$ and
$X = \bigsqcup_{i=1}^r X_i$ is the partition of $X$ into points with the same orbit type, then
$[X] = \sum_{i=1}^r [X_i]$.
\end{proposition}

We next show that $B(\G)$ is actually generated by the classes of transitive $\G$-spaces. Assume now that $X$ has a single orbit type.

\begin{lemma}
\label{lem:BurnsideHFixIndependent}
Let $\G$ be a definable groupoid with finitely many orbit types and a definable quotient and
let $(X,\alpha)$ be a $\G$-space with one orbit type and a definable quotient.
If $\HH$ is an admissible subgroupoid of $\G$, then the definable homeomorphism class of $(\G x)^{\HH}$ is independent of $x\in X$.
In particular, $\chi\big((\G x)^{\HH}\big) = \chi\big((\G y)^{\HH}\big)$ for each $x, y\in X$.
\end{lemma}
\begin{proof}
Let $x, y\in X$, and then $x$ and $y$ have the same orbit type in $X$. If $\G x = \G y$, then the result is clear, so assume not.
By Definition~\ref{def:OrbitTypeG-Space},
we have a groupoid isomorphism $\varphi\colon\G_{|\G \alpha(x)} = \G\ltimes(\G \alpha(x))\to\G_{|\G \alpha(y)} = \G\ltimes(\G \alpha(y))$ as well as
a $\big(\G\ltimes(\G \alpha(x)))$-$\big(\G\ltimes(\G \alpha(y))\big)$-equivariant definable homeomorphism $\psi\colon\G x\to\G y$ intertwined by $\varphi$.
Let $\HH$ be an admissible subgroupoid of $\G$. By Definition~\ref{def:AdmissableSubgroupoid}~iii.,
there is a definable bisection $\sigma$ of $\G$ over $\G\alpha(y)$ with target $\Ad(\sigma)_0\big(\G \alpha(y)\big) = \G\alpha(y)$ such that the isomorphism
$\Ad(\sigma)\circ\varphi\colon\G\ltimes\big(\G \alpha(x)\big)\to\G\ltimes\big(\G \alpha(y)\big)$ restricts to an isomorphism
$\HH\ltimes\big(\HH\alpha(x)\big)\to\HH\ltimes\big(\HH\alpha(y)\big)$.

Let $\widehat{\psi}\colon\G x\to\G y$ be defined by $\widehat{\psi}(z) = \big(\sigma\circ\alpha\circ\psi(z)\big)\ast\psi(z)$.
Then $\widehat{\psi}$ is a continuous definable map. To see that $\widehat{\psi}$ is a homeomorphism, we claim that the
definable continuous map $w\mapsto \psi^{-1}\big[ \sigma\big( \Ad(\sigma)_0^{-1}\big(\alpha(w)\big)\big)^{-1} \ast w \big]$ is an inverse
of $\widehat{\psi}$. Let $w\in\G y$.
Then $\alpha(w)\in\G\alpha(y)$ so that as $\Ad(\sigma)_0$ is a homeomorphism of $\G\alpha(y)$ onto itself, $\Ad(\sigma)_0^{-1}\big(\alpha(w)\big)\in\G_0$
is defined, and $\sigma\big[\Ad(\sigma)_0^{-1}\big(\alpha(w)\big)\big]$ is an arrow from $p = \Ad(\sigma)_0^{-1}\big(\alpha(w)\big)$ to
$\Ad(\sigma)\circ\Ad(\sigma)_0^{-1}\big(\alpha(w)\big) = \alpha(w)$. Let
\[
    z = \psi^{-1}\big( \sigma(p)^{-1}\ast w\big),
\]
and then
\begin{align*}
    \widehat{\psi}(z)
    &=      \big(\sigma\circ\alpha\circ\psi(z)\big)\ast\psi(z)
    \\&=    \big(\sigma\circ\alpha(\sigma(p)^{-1}\ast w)\big)\ast(\sigma(p)^{-1}\ast w)
    \\&=    \big(\sigma\circ t(\sigma(p)^{-1})\big)\ast(\sigma(p)^{-1}\ast w)
    \\&=    \big(\sigma\circ s\circ\sigma(p)\big)\ast(\sigma(p)^{-1}\ast w)
    \\&=    \sigma(p)\ast(\sigma(p)^{-1}\ast w)
    \\&=    w,
\end{align*}
so that the map $w\mapsto \psi^{-1}\big[ \sigma\big( \Ad(\sigma)_0^{-1}\big(\alpha(w)\big)\big)^{-1} \ast w \big]$ is a right inverse of $\widehat{\psi}$.
Similarly,
\begin{align*}
    &\psi^{-1}\big[ \sigma\big( \Ad(\sigma)_0^{-1}\big(\alpha(\widehat{\psi}(z))\big)\big)^{-1} \ast \widehat{\psi}(z) \big]
    \\&\qquad=  \psi^{-1}\Big[ \sigma\Big( \Ad(\sigma)_0^{-1}\big(\alpha\big[\big(\sigma\circ\alpha\circ\psi(z)\big)\ast\psi(z)\big]\big)\Big)^{-1} \ast
                \big(\sigma\circ\alpha\circ\psi(z)\big)\ast\psi(z) \Big]
    \\&\qquad=  \psi^{-1}\Big[ \sigma\Big( (t\circ\sigma)^{-1}\big[t\circ\sigma\circ\alpha\circ\psi(z)\big]\Big)^{-1} \ast
                \big(\sigma\circ\alpha\circ\psi(z)\big)\ast\psi(z) \Big]
    \\&\qquad=  \psi^{-1}\Big[ \sigma \big(\alpha\circ\psi(z)\big)^{-1} \ast
                \big(\sigma\circ\alpha\circ\psi(z)\big)\ast\psi(z) \Big]
    \\&\qquad=  \psi^{-1}\circ\psi(z)
    \\&\qquad=  z,
\end{align*}
so that $w\mapsto \psi^{-1}\big[ \sigma\big( \Ad(\sigma)_0^{-1}\big(\alpha(w)\big)\big)^{-1} \ast w \big]$ is as well a left inverse of $\widehat{\psi}$,
and $\widehat{\psi}$ is a definable homeomorphism.

In addition, $\widehat{\psi}$ is $\G\ltimes\big(\G \alpha(x)\big)$-$\G\ltimes\big(\G \alpha(y)\big)$-equivariant intertwined by
$\Ad(\sigma)\circ\varphi$. To see this, let $z\in \G x$. We compute
\begin{align*}
    \alpha\circ\widehat{\psi}(z)
    &=      \alpha \big[\big(\sigma\circ\alpha\circ\psi(z)\big)\ast\psi(z)\big]
    \\&=    t\circ\sigma\circ\alpha\circ\psi(z)
    \\&=    \Ad(\sigma)_0\circ\alpha\circ\psi(z)
    \\&=    \Ad(\sigma)_0\circ\varphi_0\circ\alpha(z)
    \\&=    \big(\Ad(\sigma)\circ\varphi\big)_0\circ\alpha(z),
\end{align*}
where the second to last step follows from the fact that $\psi$ is
$\big(\G\ltimes(\G \alpha(x)))$-$\big(\G\ltimes(\G \alpha(y))\big)$-equivariant intertwined by $\varphi$.
As well, for $g\in\G_1$ with $s(g) = \alpha(z)$, we again use the equivariance of $\psi$ and note that
$s\circ\varphi_1(g) = \varphi_0\circ\alpha(z)$ and compute
\begin{align*}
    \widehat{\psi}(g\ast z)
    &=      \big(\sigma\circ\alpha\circ\psi(g\ast z)\big)\ast\psi(g\ast z)
    \\&=    \big[\sigma\circ\alpha\big(\varphi_1(g)\ast\psi(z)\big)\big]\ast\varphi_1(g)\ast\psi(z)
    \\&=    \big(\sigma\circ t\circ\varphi_1(g)\big)\ast \varphi_1(g)\ast\psi(z)
    \\&=    \big(\sigma\circ t\circ\varphi_1(g)\big)\ast \varphi_1(g)\ast \big(\sigma\circ s\circ\varphi_1 (g)\big)^{-1}\ast
                \big(\sigma\circ s\circ\varphi_1(g)\big)\ast\psi(z)
    \\&=    \big[ \big(\sigma\circ t\circ\varphi_1(g)\big)\varphi_1(g)\big(\sigma\circ s\circ\varphi_1 (g)\big)^{-1} \big]\ast
                \big(\sigma\circ\varphi_0\circ\alpha(z)\big)\ast\psi(z)
    \\&=    \big[\big(\Ad(\sigma)\circ\varphi\big)_1(g)\big]\ast\widehat{\psi}(z).
\end{align*}
This completes the verification that $\widehat{\psi}$ is $\G\ltimes\big(\G \alpha(x)\big)$-$\G\ltimes\big(\G \alpha(y)\big)$-equivariant intertwined by
$\Ad(\sigma)\circ\varphi$.

Now, recall that $\Ad(\sigma)\circ\varphi$ restricts to an isomorphism $\HH\ltimes\big(\HH\alpha(x)\big)\to\HH\ltimes\big(\HH\alpha(y)\big)$.
Hence, if $z\in\G x$ is fixed by $\HH$, it then follows that for each $h\in\HH$ with source $\alpha\circ\widehat{\psi}(z)$, we have
\[
    h\ast\widehat{\psi}(z)
        =   \widehat{\psi}\big[ \big(\Ad(\sigma)\circ\varphi\big)_1^{-1}(h) \ast z\big]
        =   \widehat{\psi}(z)
\]
so that $\widehat{\psi}(z)$ is fixed by $\HH$ as well. As $\Ad(\sigma)\circ\varphi$ is an isomorphism and $\widehat{\psi}$ a definable homeomorphism,
the converse holds by the same argument, switching roles. Therefore, $\widehat{\psi}$ restricts to a definable homeomorphism from $(\G x)^{\HH}$
to $(\G y)^{\HH}$, completing the proof.
\end{proof}

Let $\HH$ be an admissible subgroupoid of $\G$.
As the orbit map $\pi_X\colon X\to \lvert X\rvert$ is a continuous morphism of affine definable spaces,
it follows from Theorem~\ref{thrm:Trivialization} that we can partition $\lvert X \rvert$ into definable subsets
$\lvert X \rvert = \lvert X_1\rvert\sqcup\cdots\sqcup \lvert X_t\rvert$ such that $\pi_{|\pi^{-1}(X_i)}$ admits a definable
trivialization respecting $\big(\pi_{|\pi^{-1}(X_i)}\big)^{\HH}$. Hence, each
$\pi_X^{-1}(\lvert X_i\rvert)$ is definably homeomorphic to $\lvert X_i\rvert\times E_i$ for a definable space $E_i$,
and $\pi_X$ corresponds to the projection via this definable homeomorphism.
However, note that each $E_i$ is given by the orbit $\pi_X^{-1}(\G x_i)$ of a point in $X_i$, so that as $X$ has a single orbit type,
the $E_i$ are all definably homeomorphic to $\G x$ where $x$ is any choice of point in $X$.
Then by Proposition~\ref{prop:Multiplicative}, we have
\[
    \chi(X) = \chi\big(\lvert X \rvert\big) \chi(\G x).
\]
Similarly, we have by Lemma~\ref{lem:BurnsideHFixIndependent} that the definable homeomorphism type of
$(\G x)^\HH$ does not depend on the choice of $x$. As the trivializations of $\pi$ respect the subsets
$\big(\pi_{|\pi^{-1}(X_i)}\big)^{\HH}$ of $\pi_{|\pi^{-1}(X_i)}$, we have by Lemma~\ref{lem:TrivialProduct} that
\begin{align*}
    \chi(X^{\HH})
        &=      \chi\big(\lvert X \rvert\big) \chi\big((\G x)^{\HH}\big)
        \\&=    \chi\big(\lvert X\rvert\ttimes (\G x)^{\HH}\big).
\end{align*}
Hence, it follows that $[X] = [\lvert X\rvert\ttimes\G x]$, i.e., that the class of $X$ in $B(\G)$ is equal to the class of
the trivial product of $\lvert X\rvert$ with any orbit $\G x$. Then we have the following.

\begin{lemma}
\label{lem:BurnsideSingleOrbit}
Let $\G$ be a definable groupoid with finitely many orbit types and a definable quotient and
let $(X,\alpha)$ be a $\G$-space with one orbit type and a definable quotient.
Then
\[
    [X] =   [\lvert X\rvert\ttimes(\G x)]
        =   \chi(\lvert X\rvert)[\G x].
\]
\end{lemma}

Combining Proposition~\ref{prop:BurnsideSingleOrbitType} and Lemma~\ref{lem:BurnsideSingleOrbit} yields the following.

\begin{theorem}[$B(\G)$ is generated by transitive $\G$-spaces]
\label{thrm:BurnsideGenTransitive}
Let $\G$ be a definable groupoid with finitely many orbit types and a definable quotient.
Then $B(\G)$ is generated as an abelian group by the classes of transitive $\G$-spaces.
Specifically, if $X$ is a $\G$-space that represents a class in $B(\G)$,
$X = \bigsqcup_{i=1}^r X_i$ is the partition of $X$ into points with the same orbit type, and $x_i$
is a choice of point in $X_i$ for each $i$, then
\begin{equation}
\label{eq:BurnsideGen}
    [X] =   \sum\limits_{i=1}^r \chi(\lvert X_i\rvert)[\G x_i].
\end{equation}
\end{theorem}

Note that if $[X] = [X^\prime]$ and $X$ is a transitive definable $\G$-space, it need not be the case that $X^\prime$ is transitive.
In particular, if $Y$ is any affine definable space with $\chi(Y) = 1$ (e.g., a closed interval), then $[X] = [Y\ttimes X]$
by Lemma~\ref{lem:TrivialProduct}. Note further that it is possible in Equation~\eqref{eq:BurnsideGen} that $[\G x_i] = [\G x_j]$
for $i\neq j$, i.e., that the points $x_i\neq x_j\in X$ have distinct orbit types, yet their orbits are equivalent as elements
of $B(\G)$.

By Theorem~\ref{thrm:BurnsideGenTransitive}, we can express
\[
    \chi_{\G}^{\eq}(X) = \sum\limits_{i=1}^r \chi(\lvert X_i\rvert)[\G x_i]
\]
where $X = \bigsqcup_{i=1}^r X_i$ is the partition into points with the same orbit type and $x_i$ is a choice of point in $X_i$ for each $i$;
compare \cite[Def.~2]{GZ-EquivariantAnalogues}.
Alternatively, if $C_1\sqcup\cdots\sqcup C_k$ is a definable triangulation of $\lvert X\rvert$ in the sense of \cite[Ch.~9]{vandenDriesBook}, i.e.,
each $C_j$ is the interior of a simplex, and each $C_j \subseteq X_i$ for some $i$, then for a choice of point $x_j\in C_j$,
\[
    \chi_{\G}^{\eq}(X) = \sum\limits_{j=1}^k (-1)^{\dim C_j} [\G x_j],
\]
analogous to \cite[Def.~1]{GZ-EquivariantAnalogues}. This follows directly from the fact that each $\lvert X_i\rvert$ is a disjoint union of $C_j$ and the additivity of $\chi$.

% xxxxxxxxxxxxxxxxxxxxxxxxxxxxxxxxxxxxxxxxxxxxxxxxxxxxxxxxxxxxxxxxxxxxxxxxx
\subsection{A partial multiplication on the Burnside group}
\label{subsec:BurnsideRing}

We now consider the definition of a partial multiplication on the Burnside group $B(\G)$.
Let $(X,\alpha)$ and $(Y, \beta)$ be definable $\G$-spaces that represent classes in $B(\G)$. We define
\[
    X\ftimes{\alpha}{\beta}Y
        = \{ (x,y) \in X\times Y : \alpha(x) = \beta(y) \}
\]
with anchor map
\[
    A(x,y) = \alpha(x) = \beta(y)
\]
and action map defined as follows. If $g\in \G_1$ such that $s(g) = A(x,y)$, then $s(g) = \alpha(x) = \beta(y)$
so that $g\ast x$ and $g\ast y$ are defined. We set
\[
    g\ast(x,y) = (g\ast x, g\ast y).
\]

Now, assume $(X,\alpha)$ and $(Y, \beta)$ have finitely many orbit types and definable quotients.
If $X\ftimes{\alpha}{\beta}Y$ as well has finitely many orbit types and a definable quotient, we define
\begin{equation}
\label{eq:BSTransProduct}
    [X][Y] = [X\ftimes{\alpha}{\beta}Y].
\end{equation}
Clearly, $[X\ftimes{\alpha}{\beta}Y]$ is definably homeomorphic to $[Y\ftimes{\beta}{\alpha}X]$ so that this product is commutative.
We first consider the case that $X$ and $Y$ are transitive and show that when the product is defined, it is well-defined.

\begin{proposition}
\label{prop:BSProductWellDefTransitive}
Let $\G$ be a definable groupoid with finitely many orbit types and a definable quotient and
let $(X,\alpha)$ and $(Y, \beta)$ be transitive $\G$-spaces with definable quotients. Assume that
$X \ftimes{\alpha}{\beta} Y$ has finitely many orbit types and a definable quotient. Then
$[X][Y] = [X \ftimes{\alpha}{\beta} Y]$
is well-defined, i.e., $[X \ftimes{\alpha}{\beta} Y]$ depends only on the classes $[X]$ and $[Y]$ in $B(\G)$.
\end{proposition}

Note that when $X$ and $Y$ are transitive $\G$-spaces, $X \ftimes{\alpha}{\beta} Y$ is not necessarily transitive.
As a simple example, if $G$ is a nontrivial group treated as a groupoid with one object, then $G$ is a transitive $G$-space
with respect to the action by left-translation, but $G\times G$ with the diagonal action is never transitive.

\begin{proof}
Suppose $[X] = [X^\prime]$ and $[Y] = [Y^\prime]$ for $\G$-spaces $(X^\prime, \alpha^\prime)$ and $(Y^\prime, \beta^\prime)$.
Fix an admissible subgroupoid $\HH$ of $\G$.
By Theorem~\ref{thrm:Trivialization} and Remark~\ref{rem:Trivialization}, as $\alpha$ is a continuous morphism of affine definable spaces,
we can express $\G_0 = A_1\sqcup\cdots\sqcup A_p$
such that there is a definable trivialization of $\alpha$ respecting $X^{\HH}$.
Specifically, there is for each $i$ an affine definable space $E_i$ and a map
$\lambda_i\colon \alpha^{-1}(A_i)\to E_i$ such that $(\alpha_{|\alpha^{-1}(A_i)}, \lambda_i)\colon\alpha^{-1}(A_i)\to A_i\times E_i$
is a definable homeomorphism, and such that
$(\alpha_{|\alpha^{-1}(A_i)}, \lambda_i)\big(\alpha^{-1}(A_i)^\HH\big) = A_i\times\widehat{E_i}$ for some definable subset $\widehat{E_i}$ of $E_i$.
In other words, $(\alpha_{|\alpha^{-1}(A_i)}, \lambda_i))$ identifies $\alpha^{-1}(A_i)$ with $A_i\times E_i$ in such a way that $\alpha$
corresponds to the projection onto $A_i$, and the restriction of $(\alpha_{|\alpha^{-1}(A_i)}, \lambda_i)$ to $\alpha^{-1}(A_i)^\HH$
identifies $\alpha^{-1}(A_i)^\HH$ with $A_i\times\widehat{E_i}$. In the same way, we express $\G_0 = B_1\sqcup\cdots\sqcup B_q$
so that $\beta^{-1}(B_j)$ is identified with $B_j\times F_j$ and each $\beta^{-1}(B_j)^{\HH}$ is identified with
$B_j\times\widehat{F_j}$, express $\G_0 = C_1\sqcup\cdots\sqcup C_r$
so that $(\alpha^\prime)^{-1}(C_k)$ is identified with $C_k\times E_k^\prime$ and each $(\alpha^\prime)^{-1}(C_k)^{\HH}$ is identified with
$C_k\times\widehat{E_k^\prime}$, and express $\G_0 = D_1\sqcup\cdots\sqcup D_s$
so that $(\beta^\prime)^{-1}(D_\ell)$ is identified with $D_\ell\times F_\ell^\prime$ and each $(\beta^\prime)^{-1}(D_\ell)^{\HH}$ is identified with
$D_\ell\times\widehat{F_\ell^\prime}$. Here, each $F_j$, $E_k^\prime$, and $F_\ell^\prime$ is an affine definable space with respective definable subsets
$\widehat{F_j}$, $\widehat{E_k^\prime}$, and $\widehat{F_\ell^\prime}$.
We will suppress the maps $\lambda_i$ and their analogs for the other three trivializations for simplicity and identify each space with its trivialization,
i.e., identify $\alpha^{-1}(A_i)$ with $A_i\times E_i$, etc.

Let
\[
    Z_{ijk\ell} = A_i \cap B_j \cap C_k \cap D_\ell.
\]
Then $\G_0$ is the union of the finite number of sets $Z_{ijk\ell}$, and the preimage of $Z_{ijk\ell}$ under $\alpha$ is definably homeomorphic
to $Z_{ijk\ell} \times E_i$, the preimage under $\beta$ is homeomorphic to $Z_{ijk\ell} \times F_i$, etc. As above, we identify
$\alpha^{-1}(Z_{ijk\ell})$ with $Z_{ijk\ell} \times E_i$, etc., and then with respect to these identifications,
as $\alpha$ and $\beta$ correspond to the projections onto $Z_{ijk\ell}$, we have by Corollary~\ref{cor:FibProductTrivial} that
$(Z_{ijk\ell} \times E_i)\ftimes{\alpha}{\beta} (Z_{ijk\ell} \times F_i)$ is definably homeomorphic to $Z_{ijk\ell} \times E_i \times F_i$.

Now, as $\G_0$ is the union of the $Z_{ijk\ell}$, $[X] = [X^\prime]$, and $\HH$ is admissible, we have
$\chi(X^\HH) = \chi\big((X^\prime)^\HH\big)$, i.e.,
\[
    \chi\left(\bigsqcup_{i,j,k,\ell} Z_{ijk\ell} \times\widehat{E_i} \right)
    =
    \chi\left(\bigsqcup_{i,j,k,\ell} Z_{ijk\ell} \times\widehat{E_k^\prime} \right).
\]
By the additivity of $\chi$ and Proposition~\ref{prop:Multiplicative}, it follows that
\begin{equation}
\label{eq:BSProductEs}
    \sum\limits_{i,j,k,\ell} \chi( Z_{ijk\ell} ) \chi( \widehat{E_i} )
    =
    \sum\limits_{i,j,k,\ell} \chi(Z_{ijk\ell})\chi(\widehat{E_k^\prime}).
\end{equation}
Similarly, as $[Y] = [Y^\prime]$, we have
\begin{equation}
\label{eq:BSProductFs}
    \sum\limits_{i,j,k,\ell} \chi( Z_{ijk\ell} ) \chi( \widehat{F_j} )
    =
    \sum\limits_{i,j,k,\ell} \chi(Z_{ijk\ell})\chi(\widehat{F_\ell^\prime}).
\end{equation}

Note that the definable homeomorphism class of the space $E_i$ does not depend on $i$.
For any $x\in X$, the $\G$-action on $X$ is determined by the action of the transitive subgroupoid $\G\ltimes(\G\alpha(x))$
by Proposition~\ref{prop:TransitiveG-spaces}. Because $X$ is transitive, for any $p_1,p_2\in\G_0$ in the image of $\alpha$,
there is an arrow $g\in\G_1$ with $s(g) = p_1$ and $t(g) = p_2$. The action of $g$ on $X$ yields a definable homeomorphism
from $\alpha^{-1}(p_1)$ to $\alpha^{-1}(p_2)$. Then as $E_i$ is given by the preimage under $\alpha$ of a point in $A_i\subseteq\G_0$,
each $E_i$ is definably homeomorphic to a common affine definable space $E$, and similarly for $F$, $E^\prime$, and $F^\prime$.

We next claim that the definable homeomorphism classes of $\widehat{E_i}$, $\widehat{E_k^\prime}$, $\widehat{F_j}$, and $\widehat{F_\ell^\prime}$
do not depend on the indices $i$, $j$, $k$, and $\ell$. Let $x_1, x_2\in X$, and let
$\alpha(x_1) = p_1$ and $\alpha(x_2) = p_2$. If $p_1$ and $p_2$ are in the same $\HH$-orbit, then there is an $h\in\HH_1$ such that $s(h) = p_1$ and $t(h) = p_2$. Then either $p_1 = p_2$, in which case the result is trivial, or the action of $h$ maps $\alpha^{-1}(p_1)$ to $\alpha^{-1}(p_2)$, and the action of $h^{-1}$ maps $\alpha^{-1}(p_2)$ to $\alpha^{-1}(p_1)$. In this latter case, $\alpha^{-1}(p_1)^{\HH} =\alpha^{-1}(p_1)^{\HH} = \emptyset$.

Now suppose $p_1$ and $p_2$ are not in the same $\HH$-orbit. Then by Definition~\ref{def:AdmissableSubgroupoid}~iv.,
there is a definable bisection $\sigma$ of $\G$ over $\HH p_1$ with target $\Ad(\sigma)_0(\HH p_1) = \HH p_2$ such that $\Ad(\sigma)$ restricts to an isomorphism
$\HH\ltimes(\HH p_1)\to\HH\ltimes\HH(p_2)$. It follows that if $y\in\alpha^{-1}(p_1)$ is fixed by $h\in s^{-1}(p_1)\cap\HH_1$, in which case $h\in\HH_{p_1}^{p_1}$, then
\[
    (\sigma(p_1)h\sigma(p_1)^{-1})\ast(\sigma(p_1)\ast y)
        =   \sigma(p_1)\ast(h\ast y)
        =   \sigma(p_1)\ast y,
\]
i.e., $\sigma(p_1)\ast y$ is fixed by $\sigma(p_1)h\sigma(p_1)^{-1}$.
Therefore, the action of $\sigma(p_1)$ defines a map $\alpha^{-1}(p_1)^\HH\to\alpha^{-1}(p_2)^\HH$
whose inverse is given by the action of $\sigma(p_1)^{-1}$, implying that $\alpha^{-1}(p_1)^\HH$ and $\alpha^{-1}(p_2)^\HH$
are definably homeomorphic.

In either case, it follows that $\widehat{E_i}$ does not depend on $i$ and is homeomorphic to a common affine definable space
$\widehat{E}$, and similarly for $\widehat{F}$, $\widehat{E^\prime}$, and $\widehat{F^\prime}$.
Then Equation~\eqref{eq:BSProductEs} becomes
\[
    \chi( \widehat{E} )\sum\limits_{i,j,k,\ell} \chi( Z_{ijk\ell} )
    =
    \chi(\widehat{E^\prime}) \sum\limits_{i,j,k,\ell} \chi(Z_{ijk\ell}),
\]
implying that $\chi( \widehat{E} ) = \chi(\widehat{E^\prime})$, and similarly, Equation~\eqref{eq:BSProductFs} implies
$\chi( \widehat{F} ) = \chi(\widehat{F^\prime})$.

We therefore have
\begin{align*}
    \chi\left( (X\ftimes{\alpha}{\beta}Y)^\HH \right)
        &=      \chi\left(X^\HH\ftimes{\alpha}{\beta}Y^\HH\right)
        \\&=    \chi\left(\bigsqcup_{i,j,k,\ell} Z_{ijk\ell} \times\widehat{E_i}\times\widehat{F_j} \right)
        \\&=    \sum\limits_{i,j,k,\ell} \chi( Z_{ijk\ell} ) \chi( \widehat{E_i} ) \chi( \widehat{F_j} )
        \\&=    \chi( \widehat{E} ) \chi( \widehat{F} )\sum\limits_{i,j,k,\ell} \chi( Z_{ijk\ell} )
        \\&=    \chi( \widehat{E^\prime} ) \chi( \widehat{F^\prime} )\sum\limits_{i,j,k,\ell} \chi( Z_{ijk\ell} )
        \\&=    \sum\limits_{i,j,k,\ell} \chi( Z_{ijk\ell} ) \chi( \widehat{E^\prime} ) \chi( \widehat{F^\prime} )
        \\&=    \chi\left(\bigsqcup_{i,j,k,\ell} Z_{ijk\ell} \times\widehat{E_i^\prime}\times\widehat{F_j^\prime} \right)
        \\&=    \chi\left((X^\prime)^\HH\ftimes{\alpha^\prime}{\beta^\prime}(Y^\prime)^\HH\right)
        \\&=    \chi\left((X^\prime\ftimes{\alpha^\prime}{\beta^\prime}Y^\prime)^\HH\right).
\end{align*}
It follows that $[X][Y] = [X^\prime][Y^\prime]$ so that the product is well-defined on classes of transitive $\G$-spaces
for which it is defined.
\end{proof}

We now extend the product in $B(\G)$ to the non-transitive case; see Theorem~\ref{thrm:Product} below.

\begin{lemma}
\label{lem:BSProductSum}
Let $(X,\alpha)$ and $(Y,\beta)$ be $\G$-spaces that represent classes in $B(\G)$.
Assume that for each orbit type $\G x$ of $X$ and $\G y$ of $Y$, the fibered product $(\G x)\ftimes{\alpha}{\beta}(\G y)$
has finitely many orbit types as a $\G$-space and a definable $\G$-quotient.
Let $X = \bigsqcup_{i=1}^r X_i$ and $Y = \bigsqcup_{j=1}^s Y_j$
denote the respective partitions of $X$ and $Y$ into points with the same orbit type. Then in $B(\G)$,
\begin{equation}
\label{eq:BSGenProdUnion}
    [X \ftimes{\alpha}{\beta} Y]
        = \sum\limits_{i,j} [X_i \ftimes{\alpha}{\beta} Y_j].
\end{equation}
\end{lemma}
\begin{proof}
Suppose $(x,y)\in X\ftimes{\alpha}{\beta}Y$, and then $x\in X_i$ and $y\in Y_j$ for some $i$ and $j$.
Then $(x,y)\in X_i\ftimes{\alpha}{\beta}Y_j$. Conversely, for each $i$, $j$, and
$(x,y)\in X_i\ftimes{\alpha}{\beta}Y_j$, we have $(x,y)\in X\ftimes{\alpha}{\beta}Y$. Hence,
\[
    X \ftimes{\alpha}{\beta} Y = \bigsqcup\limits_{i,j} X_i \ftimes{\alpha}{\beta} Y_j,
\]
and the result follows direction from the definition of the sum in $B(\G)$.
\end{proof}

With $(X,\alpha)$ and $(Y,\beta)$ as in Lemma~\ref{lem:BSProductSum},
fix $i$ and $j$ and consider the continuous morphism of definable spaces $o\colon X_i \ftimes{\alpha}{\beta} Y_j \to\lvert X_i\rvert\times\lvert Y_j\rvert$
that maps the point $(x,y)$ to $(\G x, \G y)$. Note that $o$ is not the orbit map for the $\G$-action on
$X_i \ftimes{\alpha}{\beta} Y_j$, but $o$ is $\G$-invariant: $o(g\ast(x,y)) = o(x,y)$ for $g\in\G_1$ such that $g\ast(x,y)$ is defined.
Given orbits $\G x$ and $\G y$, we have
\begin{align*}
    o^{-1}(\G x, \G y)
    &=
    \{ (a, b)\in X_i \ftimes{\alpha}{\beta} Y_j : a \in \G x, b\in\G y \}
    \\&=
    \{ (a, b)\in X_i \times Y_j : a \in \G x, b\in\G y, \alpha(a) = \beta(b) \}
    \\&=
    (\G x) \ftimes{\alpha|_{\G x}}{\beta|_{\G y}} (\G y).
\end{align*}
As $X_i$ and $Y_i$ each consist of a single orbit type in $X$ and $Y$, respectively, the transitive $\G$-spaces $\G x$ and $\G y$ do not depend
on the choice of $x$ and $y$.

Let $\HH$ be an admissible subgroupoid of $\G$. Applying Theorem~\ref{thrm:Trivialization}, we can definably trivialize $o$ respecting the
fixed points of $\HH$. That is, we can express
$\lvert X_i\rvert\times\lvert Y_j\rvert = W_1\sqcup\cdots\sqcup W_t$ such that for each $\ell$, $o^{-1}(W_\ell)$ is homeomorphic to
$W_\ell\times o^{-1}(\G a_\ell, \G b_\ell)$ and $o^{-1}(W_\ell)^{\HH}$ is homeomorphic to
$W_\ell\times \big(o^{-1}(\G a_\ell, \G b_\ell)\big)^{\HH}$
where $(a_\ell,b_\ell)\in X_i \ftimes{\alpha}{\beta} Y_j$ such that $o(a_\ell,b_\ell) = (\G a_\ell, \G b_\ell)\in W_\ell$;
as noted above, $o^{-1}(\G a_\ell, \G b_\ell) = (\G a_\ell) \ftimes{\alpha|_{\G a_\ell}}{\beta|_{\G b_\ell}} (\G b_\ell)$.

Fixing a choice of $(a_\ell, b_\ell)$ for each $\ell$ and corresponding trivializations,
\[
    X_i \ftimes{\alpha}{\beta} Y_j
        \simeq  \bigsqcup\limits_{\ell=1}^t W_\ell\times o^{-1}(\G a_\ell, \G b_\ell),
\]
and
\[
    (X_i \ftimes{\alpha}{\beta} Y_j)^{\HH}
    \simeq      \bigsqcup\limits_{\ell=1}^t W_\ell\times \big(o^{-1}(\G a_\ell, \G b_\ell)\big)^{\HH}.
\]
As $\HH$ was arbitrary,
\[
    [X_i \ftimes{\alpha}{\beta} Y_j]
        =   \sum\limits_{\ell=1}^t \big[W_\ell\ttimes\big((\G a_\ell) \ftimes{\alpha|_{\G a_\ell}}{\beta|_{\G b_\ell}} (\G b_\ell)\big) \big]
\]
so that by Lemma~\ref{lem:TrivialProduct},
\[
    [X_i \ftimes{\alpha}{\beta} Y_j]
        =   \sum\limits_{\ell=1}^t \chi(W_\ell)\big[\big((\G a_\ell) \ftimes{\alpha|_{\G a_\ell}}{\beta|_{\G b_\ell}} (\G b_\ell)\big)\big].
\]
Applying Proposition~\ref{prop:BSProductWellDefTransitive} to the transitive $\G$-spaces $\G a_\ell$ and $\G b_\ell$ such that
the corresponding fibered products $(\G a_\ell) \ftimes{\alpha|_{\G a_\ell}}{\beta|_{\G b_\ell}} (\G b_\ell)$ have finitely many orbit types
and definable quotients by hypothesis,
\[
    [X_i \ftimes{\alpha}{\beta} Y_j]
        =   \sum\limits_{\ell=1}^t \chi(W_\ell) [\G a_\ell][\G b_\ell].
\]
As $X_i$ and $Y_j$ each have a single orbit type, the classes $[\G a_\ell]$ and $[\G b_\ell]$ do not depend on $\ell$ so that we may choose any
$(a, b) \in X_i \ftimes{\alpha}{\beta} Y_j$ and express
\begin{align*}
    [X_i \ftimes{\alpha}{\beta} Y_j]
        &=      [\G a][\G b] \sum\limits_{i=1}^r  \chi(W_i)
        \\&=    \chi\big( \lvert X_i\rvert\times\lvert Y_j\rvert \big) [\G a][\G b]
        \\&=    \chi\big( \lvert X_i\rvert\big)\chi\big(\lvert Y_j\rvert \big) [\G a][\G b].
\end{align*}
Applying Lemma~\ref{lem:BSProductSum},
\begin{equation}
\label{eq:BSGenProd}
    [X \ftimes{\alpha}{\beta} Y]
        =   \sum\limits_{i,j} \chi\big( \lvert X_i\rvert\big)\chi\big(\lvert Y_i\rvert \big) [\G x_i][\G y_i].
\end{equation}

Finally, let us note that if $(X,\alpha)$, $(Y,\beta)$, and $(Z,\delta)$ are transitive definable $\G$-spaces representing classes in
$B(\G)$ such that $[X][Y]$ and $[X][Z]$ are defined, $[X\ftimes{\alpha}{\beta\sqcup\delta}(Y\sqcup Z)]$ is defined as well, and
\[
    [X][Y] + [X][Z] = [X\ftimes{\alpha}{\beta\sqcup\delta}(Y\sqcup Z)].
\]
In particular, $X\ftimes{\alpha}{\beta\sqcup\delta} (Y\sqcup Z) = X\ftimes{\alpha}{\beta}Y\sqcup X\ftimes{\alpha}{\delta} Z$
as definable $\G$-spaces, and the same decomposition holds for the $\HH$-fixed point sets for any admissible subgroupoid $\HH$.
Noting that distributivity of the product extends to non-transitive $\G$-spaces by Equation~\eqref{eq:BSGenProd},
we have therefore proven the following.

\begin{theorem}[Product in $B(\G)$ in terms of classes of transitive $\G$-spaces]
\label{thrm:Product}
Let $(X,\alpha)$ and $(Y,\beta)$ be  $\G$-spaces that represent classes in $B(\G)$.
Assume that for each orbit type $\G x$ of $X$ and $\G y$ of $Y$, the fibered product $(\G x)\ftimes{\alpha}{\beta}(\G y)$
has finitely many orbit types as a $\G$-space and a definable $\G$-quotient.
Let $X = \bigsqcup_{i=1}^r X_i$ and $Y = \bigsqcup_{j=1}^s Y_j$
denote the respective partitions of $X$ and $Y$ into points with the same orbit type. Then
\[
    [X \ftimes{\alpha}{\beta} Y]
        =   \sum\limits_{i,j} \chi\big( \lvert X_i\rvert\big)\chi\big(\lvert Y_i\rvert \big) [\G x_i][\G y_i].
\]
Hence, Equation~\ref{eq:BSTransProduct} yields a commutative product on $B(\G)$ that is defined for all classes $[X]$ and $[Y]$ that satisfy these hypotheses.
The product distributes over the group operation in $B(\G)$ when it is defined.
\end{theorem}

To end this paper, let us note the following.
Though the Burnside group class, i.e., the equivariant Euler characteristic,
has additive and multiplicative properties, it is not an additive nor multiplicative topological invariant
of orbit space definable groupoids in the sense of Definition~\ref{def:OrbSpDefGpdInvar}.
For $\G$-spaces $(X,\alpha)$ and $(Y,\beta)$ representing classes in $B(\G)$, $\chi_{\G}^{\eq}$ is additive
in the sense that $[X \sqcup Y] = [X] + [Y]$, i.e., $\chi_{\G}^{\eq}(X\sqcup Y) = \chi_{\G}^{\eq}(X) + \chi_{\G}^{\eq}(Y)$. This follows from
the definition of the sum in $B(\G)$; see Section~\ref{subsec:BurnsideGroup}. It is as well multiplicative in the sense that
$[X\ftimes{\alpha}{\beta}Y] = [X][Y]$, i.e., $\chi_{\G}^{\eq}(X\ftimes{\alpha}{\beta}Y) = \chi_{\G}^{\eq}(X)\chi_{\G}^{\eq}(Y)$,
when the former is defined.
However, if $\G$ and $\HH$ are definable groupoids that are as well orbit space definable, $\chi^{\eq}(\G) = \chi_{\G}^{\eq}(\G_0) = [\G_0]$,
$\chi^{\eq}(\HH) = \chi_{\HH}^{\eq}(\HH_0) = [\HH_0]$, $\chi^{\eq}(\G\sqcup\HH) = \chi_{\G\sqcup\HH}^{\eq}(\G_0\sqcup\HH_0) = [\G_0\sqcup\HH_0]$, and
$\chi^{\eq}(\G\times\HH) = \chi_{\G\times\HH}^{\eq}(\G_0\times\HH_0) = [\G_0\times\HH_0]$ take values in
$B(\G)$, $B(\HH)$, $B(\G\sqcup\HH)$, and $B(\G\times\HH)$, respectively, and hence are not
elements of a common ring.

In fact, the invariants $\chi^{\un}(\G\ltimes X)$ and $\chi_\G^{\eq}(X)$ contain distinct information, as we illustrate with the following.

\begin{example}
\label{ex:UnECvsEqEC1}
Let $\G$ be a topological groupoid such that $\G_0 = \{p_1, p_2\}$ consists of two points with the discrete topology and $\G_1$ consists of only isotropy arrows, with
$s^{-1}(p_1) = t^{-1}(p_1) = G_1$ and $s^{-1}(p_2) = t^{-1}(p_2) = G_2$ for distinct compact Lie groups $G_1$ and $G_2$.
Giving $G_1$ and $G_2$ definable structures as in Remark~\ref{rem:LieGroup}, $\G$ is clearly a definable groupoid with definable quotient and finitely many orbit types.
Note that $\lvert\G\rvert$ consists of two points with distinct weak orbit types so that $\G$ is orbit space definable, and
an admissible subgroupoid $\HH$ of $\G$ is given by any choice of closed subgroups $H_i\leq G_i$ for $i=1,2$.
Because the orbits are singletons and have distinct orbit types, Condition~iii.\@ of Definition~\ref{def:AdmissableSubgroupoid} is vacuous,
and Condition iv.\@ is trivially satisfied by a bisection given by the restriction of the unit map.

Let $X$ be any affine definable space with $\chi(X)\neq 0$. We consider $X$ as a $\G$-space in two ways, denoted $(X_i,\alpha_i)$ for $i=1,2$ with $X_1 = X_2 = X$.
We let $\alpha_i\colon X_i\to\G_0$ be the constant map $\alpha_i(x) = p_i$ and define $g\ast x = x$ for each $x\in X_i$ and $g\in\G_1$ such that $s(g) = p_i$.
Then as $X_i^{\HH} = X_i$ for any admissible $\HH$ and $i=1,2$, it follows that $[X_1] = [X_2]$ in $B(\G)$. That is,
$\chi_{\G}^{\eq}(X_1) = \chi_{\G}^{\eq}(X_2) = \chi(X)[pt]$ where $[pt]$ is the class in $B(\G)$ of a point with any trivial $\G$-action.
However, as the isotropy group of any point in $X_i$ is isomorphic to $G_i$, we have $\chi^{\un}(\G\ltimes X_i) = \chi(X)T^{[G_i]}$ for
$i=1,2$. Hence $\chi^{\un}(\G\ltimes X_1)\neq\chi^{\un}(\G\ltimes X_2)$.
\end{example}

\begin{example}
\label{ex:UnECvsEqEC2}
Let $\G$ again be a definable groupoid such that $\G_0 = \{p_1, p_2\}$ consists of two points and $\G_1$ contains only isotropy arrows, with
$s^{-1}(p_1) = t^{-1}(p_1) = \Sp^1$ and $s^{-1}(p_2) = t^{-1}(p_2) = \{1\}$.
Let $(X_1,\alpha_1)$ be the $\G$-space with $X_1 = \Sp^2$ the sphere, constant anchor $\alpha_1(x) = p_1$, and action given by the usual action
of $\Sp^1$ on $\Sp^2$ by rotations about the $z$-axis. Let $(X_2,\alpha_2)$ be the $\G$-space with $X_2 = I\sqcup\{q_1,q_2\}$ where $I$ is an open interval.
Define anchor map $\alpha_2\colon X_2\to\G_0$ by $\alpha_2(q_1) = \alpha_2(q_2) = p_1$ and $\alpha_2(x) = p_2$ for $x\in I$, and give
$X_2$ the trivial $\G$-action. For both $(X_1,\alpha_1)$ and $(X_2, \alpha_2)$, the orbit space $\lvert X_i\rvert$ consists of an open interval
with trivial isotropy and two points with $\Sp^1$-isotropy so that $\chi^{\un}(\G\ltimes X_i) = -T^{[1]} + 2T^{[\Sp^1]}$ for $i=1,2$.
However, considering the admissible subgroupoid $\HH$ of $\G$ consisting only of units, $\chi(X_1^{\HH}) = \chi(\Sp^2) = 2$, while
$\chi(X_2^{\HH}) = \chi\big(I\sqcup\{q_1,q_2\}\big) = 1$. Hence $[X_1]\neq[X_2]$ in $B(\G)$, i.e., $\chi_{\G}^{\eq}(X_1) \neq \chi_{\G}^{\eq}(X_2)$.
\end{example}

% xxxxxxxxxxxxxxxxxxxxxxxxxxxxxxxxxxxxxxxxxxxxxxxxxxxxxxxxxxxxxxxxxxxxxxxxx
% xxxxxxxxxxxxxxxxxxxxxxxxxxxxxxxxxxxxxxxxxxxxxxxxxxxxxxxxxxxxxxxxxxxxxxxxx
% xxxxxxxxxxxxxxxxxxxxxxxxxxxxxxxxxxxxxxxxxxxxxxxxxxxxxxxxxxxxxxxxxxxxxxxxx

\bibliographystyle{amsplain}
\bibliography{universalEC}

\end{document}